\documentclass{amsart}
\usepackage{amsmath}
\usepackage{amssymb}
\usepackage{graphicx}
\usepackage[symbol]{footmisc}
\usepackage{color}
\usepackage{amsthm,enumitem}
\newtheorem{theorem}{Theorem}[section]

\newtheorem{lemma}[theorem]{Lemma}
\newtheorem{corollary}[theorem]{Corollary}
\newtheorem{proposition}[theorem]{Proposition}
\theoremstyle{definition}

\newtheorem{remark}{Remark}
\newenvironment{Proof}{{\textit{Proof}.}\ }{~$\square$\vspace{0.2truecm}}

\newcommand{\Aut}{\mbox{\rm Aut}}

\newcommand{\Z}{\mathbb{Z}}
\newcommand{\C}{\mathbb{C}}

\newcommand{\M}{{\rm M}}
\newcommand{\I}{{\rm I}}
\newcommand{\E}{{\rm E}}
\newcommand{\GL}{{\rm GL}}
\newcommand{\SL}{{\rm SL}}
\newcommand{\Gal}{{\rm Gal}}

\newcommand{\core}{\mbox{\rm Core}}

\begin{document}

\title[Locally solvable and solvable-by-finite maximal subgroups]{Locally solvable and solvable-by-finite\\  maximal subgroups of $\GL_n(D)$}
\author[Huynh Viet Khanh]{Huynh Viet Khanh$^{1, 2}$}
\thanks{This work is funded by  Vietnam National University HoChiMinh City (VNUHCM) under grant number B2020-18-02}

\author[Bui Xuan Hai]{Bui Xuan Hai $^{1,2}$}
\address[1]{Faculty of Mathematics and Computer Science, University of Science, Ho Chi Minh City, Vietnam;}
\address[2]{Vietnam National University, Ho Chi Minh City, Vietnam.}
\email{ huynhvietkhanh@gmail.com; bxhai@hcmus.edu.vn}

\keywords{division ring; maximal subgroup; locally solvable group; polycyclic-by-finite group.\\
	\protect \indent 2020 {\it Mathematics Subject Classification.} 12E15, 16K20, 16K40, 20E25.}
	
	\maketitle
	
\begin{abstract} 
	This paper aims at studying solvable-by-finite and locally solvable maximal subgroups of an almost subnormal subgroup of the general skew linear group $\GL_n(D)$ over a division ring $D$. It turns out that in the case where $D$ is non-commutative, if such maximal subgroups exist, then either it is abelian or $[D:F]<\infty$. Also, if $F$ is an infinite field and $n\geq 5$, then every locally solvable maximal subgroup of a normal subgroup of $\GL_n(F)$ is abelian.
\end{abstract}

\section{Introduction and statements of main results}

The question of the existence of maximal subgroups in a division ring is difficult, and it has not been settled completely (see \cite{akbari}, \cite{dorbidi2011}, \cite{Haz-Wad}). Despite these, various aspects of maximal subgroups in division rings have been studied substantially by many authors. Let $D$ be a division ring and $D^*=D\backslash\{0\}$ its multiplicative subgroup. In \cite{ebrahimian_04}, it was showed that every nilpotent maximal subgroup of $D^*$ is abelian. In \cite{nassab14}, this result was extended to nilpotent maximal subgroups of a subnormal subgroup $G$ of $D^*$. However, we do not have an analogous result if the word ``nilpotent'' is substituted by ``solvable". In fact, the set $\C^*\cup \C^* j$ forms a non-abelian metabelian maximal subgroup of the multiplicative group $\mathbb{H}^*$ of the division ring of real quaternions $\mathbb{H}$ (see \cite{akbari}). More generally, it was shown that if  a subnormal subgroup $G$ of $D^*$ contains a non-abelian solvable maximal subgroup, then $D$ must be a cyclic algebra of prime degree over the center of $D$ (see \cite{hai-khanh} or \cite{moghaddam}). Recently, we have obtained parallel results in \cite{khanh-hai_almot} for locally nilpotent and locally solvable maximal subgroups of an almost subnormal subgroup of $D^*$. 

Continuing in this direction, we devote Section 2 of the present paper to  examining solvable-by-finite maximal subgroups of an almost subnormal subgroup of the general skew linear group $\GL_n(D)$, for an integer $n\geq 1$. Solvable-by-finite skew linear groups were also considered by several authors before (see \cite[Section 2]{hazrat}, \cite{nassab14}, \cite{wehrfritz_07}). In Section 2, among other corollaries, we prove the following theorem.

\begin{theorem} \label{theorem_1.1}
	Let $D$ be a non-commutative division ring with center $F$ which contains at least five elements, and $G$ an almost subnormal subgroup of $\GL_n(D)$ with $n\geq 1$. If $M$ is a non-abelian solvable-by-finite maximal subgroup of $G$, then $[D:F]<\infty$. Furthermore, we have $F[M]=\M_n(D)$, and there exists a maximal subfield $K$ of $\M_n(D)$  containing $F$ such that $K^*\cap G$ is the Fitting subgroup of $M$, $K/F$ is a Galois extension, $N_{\GL_n(D)}(K^*)\cap G=M $, $K^*\cap G \unlhd M$, $M/K^*\cap G\cong\Gal(K/F)$ is a finite simple group of order $n\sqrt{[D:F]}$.
\end{theorem}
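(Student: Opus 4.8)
The plan is to reduce first to the case $F[M]=\M_n(D)$, then to exhibit a maximal subfield $K$ of $\M_n(D)$ normalised by $M$, and finally to identify $\M_n(D)$ with a crossed product --- which simultaneously forces $[D:F]<\infty$ and delivers the Galois description; the simplicity of $\Gal(K/F)$ will be squeezed out of the maximality of $M$ at the very end.

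\textbf{Step 1: reducing to $F[M]=\M_n(D)$.} Put $R:=F[M]\subseteq\M_n(D)$. Since $M\subseteq R^{*}$, the subgroup $R^{*}\cap G$ contains $M$ and hence, by maximality, equals $M$ or $G$. If it is $G$, then $F[G]\subseteq R$; as $M$ is non-abelian, $G$ is non-central, and it is known that a non-central almost subnormal subgroup of $\GL_n(D)$ has $F$-linear span $\M_n(D)$, whence $R=\M_n(D)$. If instead $R^{*}\cap G=M$ with $R\subsetneq\M_n(D)$, I would examine the Wedderburn structure of $R$, its action on $D^{n}$, and the normaliser $N_G(R^{*})\supseteq M$, again equal to $M$ or $G$: the case $G$ would make a proper subalgebra $G$-invariant, against $F[G]=\M_n(D)$, while the case $M$ reduces, after a Morita-type passage, to the classification of non-abelian solvable-by-finite maximal subgroups of almost subnormal subgroups of $D_1^{*}$ in the style of \cite{hai-khanh,moghaddam,khanh-hai_almot}, which makes $M$ abelian --- a contradiction. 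Hence $F[M]=\M_n(D)$, so $M$ acts irreducibly on $D^{n}$. (This is where ``$D$ non-commutative'' and ``$|F|\ge 5$'' are used, the latter to exclude the fields $\mathbb{F}_2,\mathbb{F}_3,\mathbb{F}_4$, over which projective classical groups can be solvable.)

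\textbf{Step 2: the maximal subfield --- the main obstacle.} Running Clifford theory over $D$ for the irreducible solvable-by-finite group $M$, together with the structure theory of solvable-by-finite skew linear groups (cf.\ \cite{wehrfritz_07}) and the nilpotent case treated in \cite{nassab14}, I would show that the Fitting subgroup $A:=\operatorname{Fitt}(M)$ is abelian, of \emph{finite index} in $M$, and satisfies $C_{\M_n(D)}(A)=F[A]=:K$, a field; since $A\subseteq K$ this gives $C_{\M_n(D)}(K)=C_{\M_n(D)}(A)=K$, so $K$ is a maximal subfield of $\M_n(D)$. Because $M$ normalises $A$, it normalises $K$, so $N_{\GL_n(D)}(K^{*})\cap G\supseteq M$; maximality makes this $M$ or $G$, and $G$ is impossible (it would make the proper subalgebra $K$ $G$-invariant), so $N_{\GL_n(D)}(K^{*})\cap G=M$. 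Moreover $K^{*}\cap G$ lies in $M$ (elements of $K^{*}$ normalise $K^{*}$), is abelian and normal in $M$, hence is contained in $A$, so $K^{*}\cap G=A=\operatorname{Fitt}(M)\unlhd M$. The hard part will be the finiteness $[M:A]<\infty$: this is a Jordan--Mal'cev--Zassenhaus type phenomenon for irreducible linear groups --- precisely the point at which ``solvable-by-finite'' is indispensable --- and it is delicate because $D$ is a priori of infinite dimension over its centre.

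\textbf{Step 3: crossed product, finite dimension, Galois description.} Conjugation yields a homomorphism $M\to\Aut_F(K)$ with kernel $C_M(K)=K^{*}\cap G=A$, hence an embedding of the \emph{finite} group $\overline{M}:=M/A$ into $\Aut_F(K)$; set $F_0:=K^{\overline{M}}$, so $K/F_0$ is Galois with group $\overline{M}$ by Artin's theorem. Picking preimages $u_\sigma\in M$ of the $\sigma\in\overline{M}$, one has $u_\sigma a u_\sigma^{-1}=\sigma(a)$ for $a\in K$ and $c(\sigma,\tau):=u_\sigma u_\tau u_{\sigma\tau}^{-1}\in C_{\M_n(D)}(K)\cap G=K^{*}\cap G$, so the finite sum $\sum_{\sigma}Ku_\sigma$ is a subalgebra, equal to $F[M]=\M_n(D)$, and is a homomorphic image of the crossed product $(K/F_0,\overline{M},c)$. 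Such a crossed product of a Galois extension is a central simple $F_0$-algebra, in particular simple, so this surjection is an isomorphism and $\M_n(D)\cong(K/F_0,\overline{M},c)$ is finite-dimensional over $F_0$; comparing centres forces $F=Z(\M_n(D))=F_0$. Therefore $[D:F]<\infty$, $K/F$ is Galois, $M/(K^{*}\cap G)\cong\Gal(K/F)$, and, $K$ being a maximal subfield of the central simple $F$-algebra $\M_n(D)$, we get $|\Gal(K/F)|=[K:F]=n\sqrt{[D:F]}$.

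\textbf{Step 4: simplicity.} Finally, suppose $\Gal(K/F)$ had a proper nontrivial normal subgroup, with fixed field $F\subsetneq L\subsetneq K$; then $L/F$ is Galois, so $L$ and $B:=C_{\M_n(D)}(L)$ are $M$-stable, and $M\subseteq N_{\GL_n(D)}(B^{*})\cap G\subseteq G$. Using the equalities $M=N_{\GL_n(D)}(K^{*})\cap G=N_{\GL_n(D)}(A)\cap G$, the Galois correspondence for $K/F$, and the fact that non-central almost subnormal subgroups of $\GL_n(D)$ span $\M_n(D)$ (which keeps the relevant subgroups proper in $G$), this configuration produces a subgroup of $G$ strictly containing $M$, contradicting maximality. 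Hence $\Gal(K/F)$ is simple, and together with Steps 1--3 this establishes all the assertions of the theorem.
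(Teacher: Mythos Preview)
Your plan has the right overall shape, but Step~2 contains a genuine circularity that breaks the argument. You want $A=\operatorname{Fitt}(M)$ to have finite index in $M$ \emph{before} you know $[D:F]<\infty$, and you invoke a ``Jordan--Mal'cev--Zassenhaus type phenomenon'' to get it. Those theorems, however, require linearity over a \emph{field}; they are not available for skew linear groups over a division ring whose degree over its centre is a priori infinite. You concede this is ``the hard part'' and ``delicate'', but no argument is offered, and without $[M:A]<\infty$ your crossed product in Step~3 is not finite over $K$ and nothing forces $[D:F]<\infty$. (The abelianness of $\operatorname{Fitt}(M)$ is also asserted without proof; a nilpotent normal subgroup of a primitive skew linear group need not be abelian on general grounds.)

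The paper escapes this trap by reversing the logical order. It first uses Wehrfritz's theorem that solvable-by-finite skew linear groups are abelian-by-locally finite to choose an abelian normal $A\unlhd M$ with $M/A$ locally finite; it shows $K=F[A]$ is a maximal subfield and $\M_n(D)$ is a crossed product of $K$ by $M/A$ (Lemma~\ref{lemma_2.16}); then --- this is the step you are missing --- it proves $M/A$ is \emph{simple} via a quotient-ring argument (Lemma~\ref{lemma_2.11}(iii) and Lemma~\ref{lemma_2.17}): any normal $B\supsetneq A$ has $F[B]$ with right quotient ring $\M_n(D)$, which forces $B=M$. Only \emph{after} simplicity is in hand does ``solvable-by-finite'' enter: the solvable normal $N$ of finite index satisfies $AN=A$ or $AN=M$, and the second option makes $M/A$ simple solvable of prime order, forcing $n=1$; hence $N\subseteq A$ and $[M:A]\le[M:N]<\infty$. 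Finiteness of $[M:A]$ is the \emph{output}, not an input.

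Your Step~1 is also too breezy. The case $F[M]\subsetneq\M_n(D)$ is not handled by a ``Morita-type passage'' to $D_1^{*}$; in the paper it is a separate result (Theorem~\ref{theorem_2.14}) whose proof uses Lanski's theorem on solvable normal subgroups of prime rings together with PI-ring theory (Kaplansky) to force $M$ abelian. None of that machinery is visible in your sketch.
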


Section 3 is about studying locally solvable maximal subgroups of an almost subnormal subgroup of $\GL_n(D)$. As we have mentioned above, the case $n=1$ has been investigated subsequently in \cite{khanh-hai_almot}.  Therefore, we shall focus on the remaining case $n\geq2$ only. In \cite{dorbidi2011}, it was  pointed out that if $D$ is a non-commutative division ring and $n \geq 2$, then every solvable maximal subgroup of $\GL_n(D)$ is abelian. In \cite{khanh-hai} and \cite{moghaddam}, this result was generalized to solvable maximal subgroups of a normal subgroup of $\GL_n(D)$. To the best of our knowledge, the latest results regarding locally solvable maximal subgroups of $\GL_n(D)$ were obtained in \cite{rem-kiani}. Indeed, it was shown in Theorem 1.6 of \cite{rem-kiani} that if $D$ is an infinite division ring and $n\geq 2$, then every locally nilpotent maximal subgroup of $\GL_n(D)$ is abelian. Also, a similar result for locally solvable maximal subgroups was presented in \cite[Theorem 1.5]{rem-kiani} under some additional conditions. Here, we generalize these results to the case of locally solvable maximal subgroups of a normal subgroup of $\GL_n(D)$. More precisely, we prove in Section 3 the following result.

\begin{theorem}\label{theorem_1.2}
	Let $D$ be a non-commutative division ring with center $F$ which contains at least five elements, and $G$ a normal subgroups of $\GL_n(D)$ with $n\geq2$. If $M$ is a locally solvable maximal subgroup of $G$, then $M$ is abelian.
\end{theorem}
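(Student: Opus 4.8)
The plan is to suppose, for a contradiction, that $M$ is non-abelian, and to force the situation into the finite-dimensional case covered by Theorem~\ref{theorem_1.1}. If $G$ were central, $M$ would be abelian; so $G$ is a non-central normal subgroup of $\GL_n(D)$, and since $D$ is non-commutative it is infinite (Wedderburn), so for $n\geq 2$ the classical description of normal subgroups of $\GL_n(D)$ (no small-field exceptions occur as $D$ is infinite) gives $\SL_n(D)\leq G$ and hence $F[G]=\M_n(D)$. As $M$ is a group, $F[M]$ is just the $F$-span of $M$, so $M\leq F[M]^*\cap G\leq G$; by maximality $F[M]^*\cap G$ is $M$ or $G$, and if it is $G$ then $\M_n(D)=F[G]\subseteq F[M]$. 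Thus either $F[M]=\M_n(D)$ (i.e.\ $M$ is absolutely irreducible) or $M=F[M]^*\cap G$ with $F[M]$ a proper $F$-subalgebra of $\M_n(D)$. In the latter case I would argue, by the now-standard analysis of $F[M]$ for maximal subgroups together with the local solvability of $M$ (which rules out the reducible and imprimitive sub-cases, cf.\ \cite{khanh-hai}, \cite{moghaddam}, \cite{rem-kiani}, \cite{khanh-hai_almot}), that $F[M]$ must be commutative; then $M=F[M]^*\cap G$ would be abelian, a contradiction. So we may assume $M$ is absolutely irreducible.

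The core of the argument is then the assertion that an absolutely irreducible, locally solvable subgroup $M$ of $\GL_n(D)$ forces $[D:F]<\infty$. Here I would follow the strategy of \cite[Theorem~1.5]{rem-kiani}, adapting it from $\GL_n(D)$ to the normal subgroup $G$: using local solvability, locate a nontrivial abelian normal subgroup $A\unlhd M$ maximal such that $E:=F[A]$ is a (commutative) subfield with $C_{\M_n(D)}(E)=E$; absolute irreducibility makes the $M$-conjugates of $E$ span $\M_n(D)$ over $F$, so $\dim_F\M_n(D)$ is controlled by $[E:F]$ together with the index $[M:M\cap E^*]$ (the latter being the order of the image of $M$ in the group of automorphisms of $E/F$); and it is precisely the local solvability of $M$ that is used to force both $[E:F]<\infty$ and $[M:M\cap E^*]<\infty$. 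Hence $\dim_F\M_n(D)<\infty$, i.e.\ $[D:F]<\infty$. I expect \emph{this} to be the main obstacle: organising the interplay between irreducibility, local solvability, and the a priori possibly infinite degree $[D:F]$, and in particular producing the self-centralising abelian normal subgroup $A$ and bounding the associated data.

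Once $[D:F]<\infty$, the group $\GL_n(D)$ is a linear group of degree $m=n\sqrt{[D:F]}$ over the field $F$, so $M$ is a locally solvable linear group; by the classical Zassenhaus bound on the derived length of a solvable linear group of fixed degree, the derived lengths of the finitely generated subgroups of $M$ are uniformly bounded, so $M$ itself is solvable. Now $M$ is a non-abelian solvable (hence solvable-by-finite) maximal subgroup of the normal --- and therefore almost subnormal --- subgroup $G$ of $\GL_n(D)$, with $D$ non-commutative and $|F|\geq 5$; Theorem~\ref{theorem_1.1} then applies and yields a maximal subfield $K$ of $\M_n(D)$ with $K^*\cap G\unlhd M$ and $M/(K^*\cap G)\cong\Gal(K/F)$ a finite simple group of order $n\sqrt{[D:F]}$. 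But $\Gal(K/F)$, being a quotient of the solvable group $M$, is solvable, and a finite simple solvable group has prime order; this is absurd, since $n\sqrt{[D:F]}$ is a product of the two integers $n\geq 2$ and $\sqrt{[D:F]}\geq 2$ (as $D$ is a non-commutative finite-dimensional division algebra, $[D:F]$ is a perfect square $\geq 4$), hence composite. (Alternatively, the solvability of $M$ already contradicts the assumption that $M$ is non-abelian, by the results of \cite{khanh-hai} and \cite{moghaddam} on solvable maximal subgroups of normal subgroups of $\GL_n(D)$.) This contradiction shows that $M$ is abelian.
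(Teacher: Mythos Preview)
Your overall architecture matches the paper's: assume $M$ non-abelian, reduce to $[D:F]<\infty$, then observe that $M$ becomes a locally solvable linear group, hence solvable, and derive a contradiction. Your endgame is fine --- once $[D:F]<\infty$, the Zassenhaus bound makes $M$ solvable, and then either Theorem~\ref{theorem_1.1} together with your prime-versus-composite observation on $n\sqrt{[D:F]}$, or Corollary~\ref{corollary_2.19} directly, finishes the argument.

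The genuine gap is precisely where you flag it: the reduction to $[D:F]<\infty$. Your plan to ``locate a nontrivial abelian normal subgroup $A\unlhd M$ maximal such that $E=F[A]$ is a subfield with $C_{\M_n(D)}(E)=E$'' and then to bound $[E:F]$ and $[M:M\cap E^*]$ from local solvability is not carried out, and it is not clear how local solvability alone would produce such an $A$ or bound those quantities. The paper does \emph{not} attempt this. Instead it invokes a structural result of Wehrfritz (Lemma~\ref{lemma_3.3}): for a locally solvable \emph{primitive} subgroup $M$ of $\GL_n(D)$, the algebra $F[M]$ is a crossed product of $F[\alpha(M)]$ by $M/\alpha(M)$, where $\alpha(M)$ is a specific (locally finite)-by-abelian characteristic subgroup. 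Primitivity of $M$ comes from Lemma~\ref{lemma_2.11}(i). One then takes the maximal (locally finite)-by-abelian normal subgroup $A\supseteq\alpha(M)$, shows $M/A$ is simple (via the crossed-product structure and Lemma~\ref{lemma_2.17}, exactly as in the proof of Theorem~\ref{theorem_2.18}), hence finite of prime order; then the locally finite kernel $T\unlhd A$ either has $F[T]=\M_n(D)$ (forcing $D$ locally finite and $[D:F]<\infty$ by \cite{hai-khanh}) or is abelian (so $A$ is solvable, $M$ is solvable-by-finite, and Theorem~\ref{theorem_1.1} gives $[D:F]<\infty$). The point is that the self-centralising abelian normal subgroup you want need not exist directly; one must work through the (locally finite)-by-abelian layer $\alpha(M)$, and the crossed-product criterion is what controls the index.

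A secondary issue: your handling of the case $F[M]\neq\M_n(D)$ (``by the now-standard analysis \dots\ $F[M]$ must be commutative'') appeals to results proved for solvable or solvable-by-finite $M$, not for locally solvable $M$, so this step is not justified as written. The paper avoids this dichotomy altogether by arguing from primitivity rather than from absolute irreducibility.
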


Let us return to the linear case; that is, to the case when $D=F$ is a field. It is worth to noting that the two conditions ``local solvability'' and ``solvability'' are equivalent in this case. For an infinite field $F$ and $n<5$, the work \cite{dorbidi2011} demonstrates that $\GL_n(F)$ always contains non-abelian solvable maximal subgroups. Furthermore, in \cite{dorbidi2011} all solvable maximal subgroups of $\GL_2(F)$ for an arbitrary field $F$ have been determined completely. At the end of Section 3, we show that if $n\geq 5$ and $F$ is infinite, then every solvable maximal subgroup of a normal subgroup of $\GL_n(F)$ is abelian. 

\begin{theorem}\label{theorem_1.3}
	Let $F$ be an infinite field, and $G$ a normal subgroup of $\GL_n(F)$ with $n\geq5$. If $M$ is a solvable maximal subgroup of $G$, then $M$ is abelian.
\end{theorem}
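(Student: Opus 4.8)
The plan is to reduce to the case $\SL_n(F)\le G$, strip off the geometrically "small" configurations for $M$, and then analyse what is left, a primitive solvable maximal subgroup.

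\emph{First reduction.} Since $n\ge 3$, any subgroup of $\GL_n(F)$ normalized by $\SL_n(F)$ is either central or contains $\SL_n(F)$; applied to $G$ this lets us assume $G$ is non-central, so $\SL_n(F)\le G\le\GL_n(F)$. If $M\unlhd G$ then $G/M$ has no proper non-trivial subgroup, hence is cyclic of prime order, so $M\supseteq[G,G]\supseteq[\SL_n(F),\SL_n(F)]=\SL_n(F)$, which is not solvable --- a contradiction. Thus $N_G(M)=M$.

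\emph{Peeling off the reducible and imprimitive cases.} If $M$ fixes a subspace $0\ne W\subsetneq F^n$, then, $G$ being irreducible, $\mathrm{Stab}_G(W)$ is a proper subgroup of $G$, so $M=\mathrm{Stab}_G(W)$ by maximality; but this stabilizer contains a block copy of $\SL_k(F)$ or of $\SL_{n-k}(F)$ with $\max\{k,n-k\}\ge 2$, hence is not solvable. So $M$ is irreducible. Similarly, if $M$ preserves a system of $m\ge 2$ blocks, then $M$ is the stabilizer in $G$ of that system; if the blocks have dimension $\ge 2$ this stabilizer contains a block $\SL$ and is not solvable, and if the blocks are lines then $m=n$ and the stabilizer contains the even permutation matrices, a copy of $A_n$, which is not solvable because $n\ge 5$. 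Hence $M$ is primitive. (This is where the hypothesis $n\ge5$, rather than just $n\ge3$, enters; it will be needed once more at the end.)

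\emph{The primitive case.} Let $N$ be the Fitting subgroup of $M$. As $M$ is irreducible it has no non-trivial normal unipotent subgroup, so $N$ consists of semisimple elements, is completely reducible, and, by primitivity, $F^n$ is $N$-homogeneous. If $N$ is abelian, then $C_{\GL_n(F)}(N)=\GL_r(L)$ for a finite extension $L/F$ with $n=r[L:F]$, and $\SL_r(L)\le\SL_n(F)\le G$ centralizes $N$; since $C_G(N)\unlhd N_G(N)\supseteq M$, maximality forces either $N$ central --- whence $C_M(N)=M\le N$ and $M$ is abelian, as wanted --- or $M=N_G(N)$ with $r=1$. In the latter case $L=F[N]$ is a maximal subfield of $\M_n(F)$, one checks $M=N_G(L^*)$, and $M/(L^*\cap G)$ embeds in $\Aut(L/F)$; ruling out every non-trivial normal subgroup of this quotient (its fixed field would produce a non-solvable normal subgroup $\SL_{>1}(L')$ of $M$) shows $L/F$ is Galois with $M/(L^*\cap G)\cong\Gal(L/F)$ simple, so by solvability $\Gal(L/F)$ is cyclic of prime order $p$ and $n=p$; this configuration is excluded by showing directly that the normalizer of a degree-$p$ maximal subfield is not maximal in $G$ when $p\ge5$ (the centralizer in $G$ of the order-$p$ element of $M$ yields a strictly larger proper overgroup). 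If $N$ is non-abelian, some $O_q(N)$ is a non-abelian $q$-group of symplectic type, and $F^n$ carries an $M$-invariant tensor (or tensor-induced) decomposition; maximality makes it trivial, so $O_q(N)$ is extraspecial of order $q^3$, $n=q$, and $M$ modulo $F^*\!\cdot O_q(N)$ embeds in $\mathrm{Sp}_2(q)\cong\SL_2(q)$, which is solvable only for $q\le3$ --- contradicting $n\ge5$.

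\emph{Main obstacle.} The substance is the primitive case, and within it the two "geometric" families: normalizers of maximal subfields and of extraspecial subgroups. The extraspecial subcase falls out of the solvability of $\mathrm{Sp}_2(q)$, but the prime-degree subfield subcase genuinely requires a non-maximality argument inside $G$; everything else (the normal-subgroup structure of $\GL_n(F)$ and the geometry of parabolic and imprimitivity stabilizers) is routine.
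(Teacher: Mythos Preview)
Your outline follows an Aschbacher-style case analysis (reducible / imprimitive / primitive with abelian or extraspecial Fitting subgroup), which is a natural strategy and quite different from the paper's route.  The paper never touches extraspecial groups or parabolic stabilizers; instead it works ring-theoretically: it uses Lemma~2.9 and Lemma~2.11 to force either $F[M]=\M_n(F)$ or $M$ abelian, then in the absolutely irreducible case invokes Wehrfritz to get $M$ abelian-by-finite, builds a crossed product of a maximal subfield $K=F[A]$ by a simple quotient $M/A$, and from solvability obtains $|M/A|=p$ prime with $n=p$.  The contradiction is then extracted by passing to the algebraic closure: since $M$ stays absolutely irreducible over $\bar F$, the abelian normal subgroup $A$ is forced into $Z(M)=M\cap F^*$, whence $K=F[A]=F$, contradicting $[K:F]=p$.

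The genuine gap in your proposal is exactly the step you flag as the ``main obstacle''.  Your suggested resolution --- ``the centralizer in $G$ of the order-$p$ element of $M$ yields a strictly larger proper overgroup'' --- does not work as stated.  First, there is no canonical element of order $p$ in $M$: the quotient $M/(L^*\cap G)\cong\Z_p$ need not split, and a lift $x$ only satisfies $x^p\in F^*$.  Second, and more seriously, $C_G(x)$ does \emph{not} contain $M$: the element $x$ acts nontrivially on $L$ (it induces a generator of $\Gal(L/F)$), so $L^*\cap G\not\subseteq C_G(x)$, and there is no reason for $C_G(x)$ to sit strictly between $M$ and $G$.  In fact normalizers of degree-$p$ cyclic subfields are precisely the Aschbacher class $\mathcal C_3$ candidates and can be maximal in $\GL_p$; you need a genuinely different mechanism to exclude them here.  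The paper's algebraic-closure argument supplies one: it shows that if such an $M$ is absolutely irreducible, the abelian part $A$ must already be scalar over $F$, collapsing $K$ to $F$.

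A smaller issue: in your extraspecial subcase you conclude from ``$M/(F^*\!\cdot O_q(N))$ embeds in $\mathrm{Sp}_2(q)$'' that $q\le3$.  But solvability of $M$ only says the \emph{image} is solvable, and $\mathrm{Sp}_2(q)$ has plenty of solvable subgroups for every $q$; you would still need a maximality argument to force the image to be all of $\mathrm{Sp}_2(q)$, and that depends on which automorphisms of the extraspecial group are realized over $F$ rather than $\bar F$.  Likewise, your simplicity argument for $M/(L^*\cap G)$ via ``the fixed field $L'$ produces $\SL_{>1}(L')\unlhd M$'' is not right: $\SL_{[L:L']}(L')$ lives in $C_G(L')$ but does not normalize $L$, hence is not contained in $M=N_G(L^*)$.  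The paper gets simplicity of $M/A$ instead from the crossed-product structure via Lemma~2.17.
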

The motivation of the obtained results is \cite[Conjeture 1]{akbari}  which states that the general skew linear group $\GL_n(D)$ contains no solvable maximal subgroups if $D$ is non-commutative and $n\geq 2$ or if $D$ is commutative and $n\geq 5$. Additionally, we refer the reader to \cite{dorbidi2011} and \cite{moghaddam} for a full discussion of this conjecture. In view of the obtained results, this conjecture is reduced to the case of abelian maximal subgroups, which is exactly the Conjecture 2 of \cite{akbari}.

Let $R$ be a ring, $S$ a subring of $R$, and $G$ a subgroup of $R^*$ normalizing $S$ such that $R=S[G]$. Suppose that $N=G\cap S$ is a normal subgroup of $G$ and $R=\bigoplus_{t\in T} tS$, where $T$  is some (and hence any) transversal $T$ of $N$ to $G$. Then, we say that $R$ is a crossed product of $S$ by $G/N$ (see \cite{wehrfritz_91} or \cite[1.4]{shirvani-wehrfritz}). Let $K/F$ be a cyclic extension of fields with the Galois group generated by an automorphism $\sigma$ of order $s=\dim_FK$. Fixing a nonzero element $ a \in F$ and a symbol $x$, we let 
$$C=K\cdot1\oplus K\cdot x\oplus\cdots\oplus K\cdot x^{s-1},$$
and multiply elements in $C$ by using the distributive law and the two rules 
$$x^s=a,\;\;\;\;\; x\cdot b=\sigma(b)x,$$
for any $b\in K$. Then $C$ is an $F$-algebra and is called the \textit{cyclic algebra associated with ($K/F, \sigma$)  and $a\in F\backslash\{0\}$} (see \cite[p.218]{tylam_FC}). 

Throughout this paper, we denote by $D$ a division ring with center $F$ and by $D^*$ the multiplicative group of $D$. For a positive integer $n$, the symbol $\M_n(D)$ stands for the matrix ring of degree $n$ over $D$. We identify $F$ with $F\I_n$ via the ring isomorphism $a\mapsto a\I_n$, where $\I_n$ is the identity matrix of degree $n$. If $S$ is a subset of $\M_n(D)$, then $F[S]$ denotes the subring of $\M_n(D)$ generated by the set $S\cup F$. Also, if $S$ is a subset of $D$, then $F(S)$ is the division subring of $D$ generated by $S\cup F$. Recall that a division ring $D$ is \textit{locally finite} if for every finite subset $S$ of $D$, the division subring $F(S)$ is a finite dimensional vector space over $F$. If $A$ is a ring or a group, then $Z(A)$ denotes the center of $A$.

Let $V =D^n= \left\{ {\left( {{d_1},{d_2}, \ldots ,{d_n}} \right)\left| {{d_i} \in D} \right.} \right\}$. If $G$ is a subgroup of $\GL_n(D)$, then $V$ may be viewed as $D$-$G$ bimodule. Recall that a subgroup $G$ of $\GL_n(D)$ is  \textit{irreducible} (resp. \textit{reducible, completely reducible}) if $V$ is irreducible (resp. reducible, completely reducible) as $D$-$G$ bimodule. If $F[G]=\M_n(D)$, then   $G$ is  \textit{absolutely irreducible} over $D$. An irreducible subgroup $G$ is \textit{imprimitive} if there exists an integer $ m \geq 2$ such that $V =  \oplus _{i = 1}^m{V_i}$ as left $D$-modules and for any $g \in G$ the mapping $V_i \to V_ig$ is a permutation of the set $\{V_1, \cdots, V_m\}$. If $G$ is irreducible and not imprimitive, then $G$ is \textit{primitive}. 

\section{Solvable-by-finite maximal subgroups}\label{section_subnormal}

Let us recall the notion of an almost subnormal subgroup. Let $G$ be a group and $H$ a subgroup of $G$. Following Hartley \cite{Hartley_89}, we say that $H$ is an \textit{almost subnormal} subgroup of $G$ if there is a finite chain of subgroups
$$H=H_0\leq H_1\leq\cdots\leq H_r=G,$$
such that either $[H_{i+1}:H_i]$ is finite or $H_i$ is normal in $H_{i+1}$ for $1\leq i\leq r-1$. It is clear that if $H$ is a subnormal subgroup of $G$, then it is almost subnormal. Let $D$ be a division ring and $n$ an integer number. It was shown in \cite[Theorem 3.3]{ngoc_bien_hai_17} that if $n\geq 2$ and $D$ is infinite, then every almost subnormal subgroup of $\GL_n(D)$ is normal. On the other hand, this result does not hold in the case $n=1$; that is, the case $\GL_1(D)=D^*$. There are examples of division rings which contain almost subnormal subgroups that are not subnormal (see \cite{deo-bien-hai-19} and \cite{ngoc_bien_hai_17}). In addition, it is surprising that every non-central almost subnormal subgroup of $D^*$ always contains a non-central subnormal subgroup. This fact allows us to extend naturally a lot of results for subnormal subgroups to those of almost subnormal subgroups. This interesting result was obtained previously in \cite{hai-minh-bien-chua} by using somewhat properties of graphs. In this paper, we give an alternative proof for this fact without using graphs (Proposition \ref{proposition_2.2}). For this purpose, we need the following lemma which may be adopted from \cite[Lemma 4]{stuth}.
\begin{lemma}\label{lemma_2.1}
	Let $D$ be a division ring, and $N$ a non-central subnormal subgroup of $D^*$. If $G$ is a non-central subgroup of $D^*$, which is invariant under $N$, that is  $x^{-1}Gx\subseteq G$ for all $x\in N$, then $G\cap N$ is non-central.
\end{lemma}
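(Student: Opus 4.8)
The plan is to argue by contradiction: suppose that $G\cap N$ is central, i.e. $G\cap N\subseteq F$, and, using that $G$ is non-central, fix an element $g\in G\setminus F$; the goal is then to show that in fact $g\in F$, which is absurd. The device that makes the relevant commutators land where we want is the auxiliary subgroup $N^*:=N\cap gNg^{-1}$. Since conjugation by $g$ is an automorphism of $D^*$ carrying subnormal chains to subnormal chains, $gNg^{-1}$ is subnormal in $D^*$, and intersecting the subnormal chain of $N$ with that of $gNg^{-1}$ term by term (normality is inherited at each step) shows that $N^*$ is subnormal in $D^*$. Being the intersection of two non-central subnormal subgroups, $N^*$ is again non-central — this is part of the standard structure theory of subnormal subgroups of a division ring (the same circle of ideas as in Stuth's work).

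Next I would run the commutator computation over $N^*$. For $x\in N^*$ we have $x\in N$ and also $g^{-1}x^{-1}g=(g^{-1}xg)^{-1}\in N$, because $x\in gNg^{-1}$; hence $[g,x]=(g^{-1}x^{-1}g)x\in N$. On the other hand $[g,x]=g^{-1}(x^{-1}gx)\in G$, since $g^{-1}\in G$ and $x^{-1}gx\in x^{-1}Gx\subseteq G$ by the $N$-invariance of $G$. Therefore $[g,x]\in G\cap N\subseteq F$, that is, $[g,N^*]\subseteq F$. It remains to deduce $g\in F$ from this, which is the heart of the matter; I would isolate it as the statement: if $g\in D^*$ and $[g,N^*]\subseteq F$ for some non-central subnormal subgroup $N^*$ of $D^*$, then $g\in F$.

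To prove that statement, set $c(x):=[g,x]$. From $[g,N^*]\subseteq F$ one gets $x^{-1}gx=c(x)g$ with $c(x)\in F^*$ for all $x\in N^*$, and a one-line computation shows that $c\colon N^*\to F^*$ is a group homomorphism whose kernel is $C_{N^*}(g)=N^*\cap C_{D^*}(g)$. As a kernel, $C_{N^*}(g)$ is normal in $N^*$, hence subnormal in $D^*$. Now split into two cases. If $C_{N^*}(g)$ is non-central, then $g$ centralises a non-central subnormal subgroup of $D^*$, so $g$ lies in the centraliser in $D$ of that subgroup, which equals $Z(D)=F$ by the classical fact that the centraliser of a non-central subnormal subgroup of $D^*$ is the centre; thus $g\in F$. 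If instead $C_{N^*}(g)\subseteq F$, then $C_{N^*}(g)=N^*\cap F$ and $N^*/(N^*\cap F)\cong c(N^*)\le F^*$ is abelian, so $[N^*,N^*]\subseteq F$; then $[N^*,N^*]$ is abelian and $N^*$ is solvable, contradicting the well-known fact that a solvable subnormal subgroup of $D^*$ is central. So this case cannot occur, and in all cases $g\in F$, contradicting $g\notin F$. Hence $G\cap N$ is non-central.

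The step I expect to demand the most care is the opening structural one: that $N^*=N\cap gNg^{-1}$ is a non-central subnormal subgroup of $D^*$. Subnormality is routine as indicated, but the non-centrality of the intersection of two non-central subnormal subgroups — together with the centraliser statement and the fact that solvable subnormal subgroups are central quoted above — is precisely the kind of input one takes from the established theory (Stuth and its descendants) rather than reproving here. If one needs only the case where $N$ is normal in $D^*$, these complications vanish: one simply takes $N^*=N$, since then $g^{-1}xg\in N$ for every $x\in N$ automatically.
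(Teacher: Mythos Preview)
The paper offers no proof of this lemma; it is simply quoted from \cite[Lemma~4]{stuth}. Your argument is a sound reconstruction: the commutator trick via $N^{*}=N\cap gNg^{-1}$ forces $[g,N^{*}]\subseteq G\cap N\subseteq F$, the map $c$ is genuinely a homomorphism with kernel $C_{N^{*}}(g)$, and the dichotomy on whether $C_{N^{*}}(g)$ is central closes both branches cleanly.

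The only caveat, which you already flag yourself, is a circularity risk. The three external facts you invoke --- that the intersection of two non-central subnormal subgroups of $D^{*}$ is non-central, that $C_{D}(H)=F$ for any non-central subnormal $H$, and that solvable subnormal subgroups of $D^{*}$ are central --- all sit in the same Stuth--Scott layer of the theory as Lemma~4 itself. In Stuth's original development some of these are established alongside or downstream of Lemma~4 rather than upstream of it, so a genuinely self-contained proof would require checking that dependency graph (in practice, an induction on the subnormal depth of $N$ is how Stuth avoids the issue). For the purposes of this paper, which treats the lemma as a black box from \cite{stuth}, your sketch already supplies more than the paper does.
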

 
\begin{proposition}\label{proposition_2.2}
	Let $D$ be a division ring with center $F$. Then, every non-central almost subnormal subgroup of $D^*$ contains a non-central subnormal subgroup.
\end{proposition}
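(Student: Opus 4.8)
The plan is to argue by induction on the length $r$ of an almost-subnormal chain $H=H_0\leq H_1\leq\cdots\leq H_r=D^*$ witnessing that $H$ is almost subnormal in $D^*$. If $r=0$ then $H=D^*$, and since $H$ is non-central by hypothesis, $N:=D^*$ is itself the required non-central subnormal subgroup. So suppose $r\geq 1$. Since $H_0\leq H_1$ and $H_0$ is non-central, $H_1$ is non-central as well; and $H_1\leq H_2\leq\cdots\leq H_r=D^*$ exhibits $H_1$ as almost subnormal of chain length $r-1$, so the induction hypothesis yields a non-central subnormal subgroup $N$ of $D^*$ with $N\leq H_1$. It then remains to push $N$, or a suitable piece of it, down into $H_0$, and the two clauses in the definition of an almost-subnormal chain give two cases.

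First suppose $H_0\unlhd H_1$. Then $N\leq H_1$ normalizes $H_0$, so $H_0$ is a non-central subgroup of $D^*$ invariant under the non-central subnormal subgroup $N$; by Lemma \ref{lemma_2.1}, $H_0\cap N$ is non-central. Moreover $H_0\unlhd H_1$ gives $H_0\cap N\unlhd H_1\cap N=N$, and $N$ is subnormal in $D^*$, so $H_0\cap N$ is a non-central subnormal subgroup of $D^*$ contained in $H=H_0$, as wanted.

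Now suppose instead that $[H_1:H_0]<\infty$. Then $N_0:=N\cap H_0$ has finite index in $N$, so its normal core $C:=\bigcap_{x\in N}x^{-1}N_0x$ is a normal subgroup of $N$ of finite index; since $N$ is subnormal in $D^*$, so is $C$, and clearly $C\leq N_0\leq H_0=H$. The crucial point is that $C$ is non-central. Indeed, if $C\subseteq F$, then $C\subseteq Z(N)$ has finite index in $N$, so $N$ is center-by-finite and hence $N'$ is finite by Schur's theorem; then $N'$ is a finite subnormal subgroup of $D^*$, so $N'\subseteq F$, which makes $N$ nilpotent of class at most $2$; but a nilpotent subnormal subgroup of $D^*$ must lie in $F$, contradicting the non-centrality of $N$. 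Hence $C\not\subseteq F$, and $C$ is the required non-central subnormal subgroup.

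I expect the finite-index case to be the main obstacle: one must produce a non-central subnormal subgroup inside a finite-index subgroup $N_0$ of the non-central subnormal subgroup $N$. Passing to the normal core in $N$ is the natural move, but it is useful only once one knows that a non-central subnormal subgroup of $D^*$ cannot be center-by-finite; this is exactly where the structural facts about division rings enter (Schur's theorem, together with the standard results that finite, resp. nilpotent, subnormal subgroups of $D^*$ are central). Everything else reduces to combining Lemma \ref{lemma_2.1} with elementary manipulations of subgroups, which I would treat as routine.
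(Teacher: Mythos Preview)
Your proof is correct and follows the same inductive skeleton as the paper's: induction on the length of an almost-subnormal chain, with the normal-link case handled via Lemma~\ref{lemma_2.1} exactly as in the paper's Case~2.1.

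The genuine difference lies in the finite-index case. The paper takes the core $C=\core_{G_1}(G)$ of $G$ inside the larger group $G_1$, then argues that $C$ is non-central by splitting on whether $F$ is finite or infinite (periodicity and Herstein's theorem in the first case, a group-identity argument from \cite{ngoc_bien_hai_17} in the second); since this $C$ is not obviously subnormal in $D^*$, the paper must then intersect $C$ with $N$ and reapply Lemma~\ref{lemma_2.1}. Your route is tighter: you pass to the core of $N\cap H_0$ inside $N$, which is automatically subnormal in $D^*$, and you establish non-centrality uniformly by observing that a center-by-finite subnormal subgroup $N$ of $D^*$ would have finite $N'$ (Schur), hence central $N'$ (Herstein), hence nilpotent $N$, hence central $N$. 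This avoids the $|F|$ case split and the group-identity machinery, at the cost of invoking the classical fact that nilpotent (or solvable) subnormal subgroups of $D^*$ are central---a result that is standard and independent of Proposition~\ref{proposition_2.2}, so there is no circularity.
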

\begin{proof}
	If $G$ is a non-central almost subnormal subgroup of $D^*$, then by definition, there exists a finite chain of subgroups 
	$$G=G_0\leq G_1\leq \cdots\leq G_n=D^*,$$
	for which either $[G_{i+1}:G_i]$ is finite or $G_i\unlhd G_{i+1}$. We will prove that $G$ contains a non-central subnormal subgroup  by induction on $n$. 
	
	\bigskip 
	
	\textbf{\textit{Step 1. }} \textit{To prove the proposition when $n=1$}. Assume that $[D^*:G]$ is finite. 
	If we set $N=\core_{D^*}(G)=\bigcap_{x\in D^*}x^{-1}Gx$, then $N$ is a normal subgroup of finite index in $D^*$. It is clear that $D^*$ is radical over $N$. We assert that $N$ is not contained in $F$. Assume by contradiction that $N\subseteq F$. We divide our situation into two cases:
	
	\bigskip 
	
	\textit{Case 1.1: $F$ is finite.}  Since $F$ is finite, so is $N$. Additionally, we have $D^*$ is radical over $N$, from which it follows that every element of $D^*$ is periodic. According to [13, Theorem 8], we conclude that $D$ is commutative, a contradiction.
	
	\bigskip 
	
	\textit{Case 1.2: $F$ is infinite.} If se set $s=[D^*:N]$, then for any $a, b\in D^*$ we have $a^s, b^s \in N\subseteq F$. This implies that $a^{-s} b^{-s}a^sb^s=1$ for any $a,b\in D^*$. In other words, $D^*$ satisfies the non-trivial group identity $x^{-s} y^{-s}x^sy^s=1$. In view of [21, Theorem 2.2], we obtain that $D$ is commutative, a contradiction.
	
	\bigskip
	
	\textbf{\textit{Step 2.}} \textit{To prove the proposition when $n>1$}. It follows by induction hypothesis that $G_1$ possesses a non-central subnormal subgroup $N$ of $D^*$ with a subnormal series $N=N_0 \unlhd N_1\unlhd\cdots\unlhd N_m=D^*$. There are two possible cases:
	
	\bigskip
	
	\textit{Case 2.1:} $G=G_0  \unlhd  G_1$. In this case, we have $N\cap G \unlhd N\cap G_1=N$; recall that $N\subseteq G_1$. Since $G$ is invariant under $G_1$, it is also invariant under $N$. It follows from Lemma \ref{lemma_2.1} that $N\cap G$ is non-central. Consequently, $N\cap G$ is a non-central subnormal subgroup of $D^*$ contained in $G$, with the subnormal series
		$$ N\cap G \unlhd N=N_0 \unlhd N_1\unlhd\cdots\unlhd N_m=D^*.$$
		
	\bigskip
	
	\textit{Case 2.2:} $[G_1:G]<\infty$. If $ C=\core_{G_1}(G)$, then $C$ is a normal subgroup of finite index in $G_1$ contained in $G$. We claim that $C$ is non-central. Indeed, assume on contrary that $C\subseteq F$. If $F$ is finite, then $C$ is a finite group. This yields that $G_1$ is also finite, and so is $N$. It follows from \cite[Theorem 8]{her} that $N$ is central, a contradiction. In the remaining case where $F$ is infinite, if $C\subseteq F$, then $G_1$ is a non-central almost subnormal subgroup of $D^*$  satisfying  the identity $x^{-k}y^{-k}x^{k}y^{k} = 1$, where $k=[G_1:C]$, which contrasts to \cite[Theorem 2.2]{ngoc_bien_hai_17}. Therefore $C$ is non-central, as claimed. By the same arguments used in Case 2.1, we conclude that $C\cap N$ is a non-central subnormal subgroup of $D^*$.
\end{proof}
\begin{corollary}\label{corollary_2.3}
	Let $D$ be a division ring with center $F$, and $G$ is a non-central almost subnormal subgroup of $D^*$. If $K$ is a division subring of $D$ normalized by $G$, then either $K\subseteq F$ or $K=D$.
\end{corollary}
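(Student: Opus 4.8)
The plan is to reduce the statement, via Proposition~\ref{proposition_2.2}, to a Cartan--Brauer--Hua phenomenon for subnormal subgroups. If $K\subseteq F$ there is nothing to prove, so I would assume $K\not\subseteq F$ and aim at $K=D$. Since $G$ is a non-central almost subnormal subgroup of $D^*$, Proposition~\ref{proposition_2.2} furnishes a non-central subnormal subgroup $N$ of $D^*$ with $N\leq G$, say with a subnormal series $N=N_0\unlhd N_1\unlhd\cdots\unlhd N_m=D^*$. As $G$ normalizes $K$, it normalizes $K^*$, hence so does $N$.

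Because $K\not\subseteq F$, the group $K^*$ is non-central and invariant under the non-central subnormal subgroup $N$, so Lemma~\ref{lemma_2.1} shows that $H:=K^*\cap N$ is non-central. Since $N$ normalizes both $K^*$ and $N$, we get $H\unlhd N$, and prepending $H$ to the series above displays $H$ as a non-central subnormal subgroup of $D^*$ contained in $K$; consequently the division subring $F(H)$ generated by $H$ lies in $K$ and is normalized by $N$. It now suffices to see that $F(H)=D$, for then $K=D$.

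For this last point I would appeal to the subnormal version of the Cartan--Brauer--Hua theorem: a division subring of $D$ normalized by a non-central subnormal subgroup of $D^*$ is either contained in $F$ or equal to $D$ (see \cite{stuth}). Applying it to $F(H)$, which contains the non-central group $H$ and hence is not contained in $F$, gives $F(H)=D$, as required. If one prefers a self-contained argument, this subnormal Cartan--Brauer--Hua statement can be derived from the classical one by induction on the length of the subnormal series, Lemma~\ref{lemma_2.1} being exactly the mechanism that carries information from one term of the series down to the next.

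I expect the main obstacle to be precisely that induction. The classical Cartan--Brauer--Hua argument exploits the element $1+d$ for $d\in D^*$, but when one only has invariance of a division subring under a \emph{subnormal} subgroup $N$, the element $1+d$ need not lie in $N$, so the computation cannot be run directly; reorganising it so that it descends the subnormal series of $N$ --- intersecting $K$ with successive terms and keeping control of the division subrings generated along the way via Lemma~\ref{lemma_2.1} --- is the part that needs genuine care. Once that is in hand, the corollary follows at once from the two reductions above.
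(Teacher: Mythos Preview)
Your approach is essentially the paper's: use Proposition~\ref{proposition_2.2} to pass from the almost subnormal $G$ to a non-central subnormal $N\leq G$, note that $N$ normalizes $K$, and then invoke Stuth's subnormal Cartan--Brauer--Hua theorem \cite[Theorem~1]{stuth}. The paper does exactly this in two lines, applying Stuth directly to the pair $(K,N)$.

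Your argument is correct but contains a redundant detour. Once you have the non-central subnormal $N$ normalizing $K$, Stuth's theorem already gives $K\subseteq F$ or $K=D$ immediately; there is no need to first intersect with $N$ via Lemma~\ref{lemma_2.1}, form $H=K^*\cap N$, build $F(H)$, and then apply Stuth to $F(H)$ instead. Those extra steps are harmless and the logic is sound, but they add nothing. Likewise, the ``main obstacle'' you anticipate---re-proving the subnormal Cartan--Brauer--Hua by descending a subnormal series---is not needed here, since \cite{stuth} is cited as a black box both in your argument and in the paper's.
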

\begin{proof}
	By Proposition \ref{proposition_2.2}, $G$ contains a non-central subnormal subgroup $N$, which also normalizes $K$. The result follows immediately from \cite[Theorem 1]{stuth}.
\end{proof}

\begin{lemma}\label{lemma_2.4}
	Let $D$ be a division ring with center $F$, and $G$ an almost subnormal subgroup of $D^*$. If $G$ is  (locally solvable)-by-finite, then $G\subseteq F$.
\end{lemma}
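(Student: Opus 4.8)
The plan is to argue by contradiction: suppose $G\not\subseteq F$. Then $D\neq F$, so $D$ is non-commutative, and I will show that $D$ must in fact be commutative, which is absurd. The first move is to replace $G$ by a \emph{subnormal} subgroup on which the hypotheses still bite. Since $G$ is non-central and almost subnormal in $D^*$, Proposition \ref{proposition_2.2} yields a non-central subnormal subgroup $N$ of $D^*$ with $N\leq G$. As $G$ is (locally solvable)-by-finite, fix $H\unlhd G$ with $[G:H]<\infty$ and $H$ locally solvable, and put $N_0:=N\cap H$. Because $H\unlhd G$ and $N\leq G$, we get $N_0\unlhd N$, so $N_0$ is subnormal in $D^*$; moreover $[N:N_0]\leq[G:H]<\infty$, and $N_0\leq H$ is locally solvable.

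Next I would show that $N_0$ is non-central, using exactly the toolkit of the proof of Proposition \ref{proposition_2.2}. If $N_0\subseteq F$, set $k=[N:N_0]$; since the exponent of $N/N_0$ divides $k$, we get $x^{k}\in N_0\subseteq F$, hence $[x^{k},y^{k}]=1$, for all $x,y\in N$. If $F$ is infinite, this is a nontrivial group identity $x^{-k}y^{-k}x^{k}y^{k}=1$ on the non-central (almost) subnormal subgroup $N$, contradicting \cite[Theorem 2.2]{ngoc_bien_hai_17}. If $F$ is finite, then $N_0\subseteq F$ is finite and $[N:N_0]<\infty$, so $N$ is finite, hence periodic, and \cite[Theorem 8]{her} forces $N\subseteq F$ — again impossible. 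Thus $N_0$ is a non-central, subnormal, locally solvable subgroup of $D^*$.

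The heart of the argument is to contradict the existence of such an $N_0$. Since $N_0$ normalizes the division subring $F(N_0)$ and also its centralizer $C_D(N_0)$ (indeed $aC_D(N_0)a^{-1}=C_D(aN_0a^{-1})=C_D(N_0)$ for $a\in N_0$), Stuth's theorem \cite[Theorem 1]{stuth}, applied to the non-central subnormal subgroup $N_0$, forces $F(N_0)=D$ and $C_D(N_0)=F$; in particular $Z(N_0)=N_0\cap F$. So $D$ is generated over its centre by the locally solvable group $N_0$. I would then cut $N_0$ down to an abelian-by-central piece: when $N_0$ is genuinely solvable one takes the last non-central term $M$ of its derived series, so $M$ is subnormal in $D^*$ (characteristic in $N_0$), non-central, and $[M,M]\subseteq F$, i.e.\ $M$ is nilpotent of class $\leq 2$; for the locally solvable case one first invokes the structure theory of locally solvable skew linear groups \cite{wehrfritz_07} (for $\GL_1(D)=D^*$) to locate inside $N_0$ a nontrivial characteristic subgroup of the same type, reducing to the previous situation. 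It then remains to rule out a non-central subnormal $M\leq D^*$ with $[M,M]\subseteq F$ when $D$ is non-commutative: if $M$ is abelian this is immediate from \cite[Theorem 1]{stuth} (else $F(M)=D$ would be a field); in the class-$2$ case one analyses the conjugation action of an element $a\in M\setminus F$, which normalizes each field $F(b)$ ($b\in M$) and acts on it as a scalar from $F$, and combines this with $F(M)=D$, $C_D(M)=F$, and the subnormality of $M$ to reach the contradiction.

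The step I expect to be the real obstacle is this last one, in two places. First, the passage from ``$N_0$ locally solvable'' to ``a non-central subnormal subgroup with central commutator subgroup'' is genuinely non-trivial precisely because, for a \emph{locally} solvable group, the derived series need not terminate, so one cannot just take its bottom term; this is where the structure results on (locally) solvable skew linear groups (\cite{wehrfritz_07}, and in the solvable-by-finite range also \cite{hazrat}, \cite{nassab14}) are essential. Second, excluding a non-central subnormal subgroup with central commutator subgroup is classical in spirit — Cartan--Brauer--Hua together with \cite[Theorem 1]{stuth} — but still needs the scalar-action bookkeeping indicated above. Everything preceding (the reduction via Proposition \ref{proposition_2.2}, the finite-index intersection, and the group-identity/periodicity argument for non-centrality) is routine.
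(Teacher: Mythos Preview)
Your reduction to a non-central, locally solvable, \emph{subnormal} subgroup $N_0\leq D^*$ is correct, and you are right that the real work lies in excluding such an $N_0$. But that exclusion is precisely the content of \cite{khanh-hai_almot} (locally solvable almost subnormal subgroups of $D^*$ are central), and your sketch does not supply an independent proof of it. The appeal to \cite{wehrfritz_07} for ``structure theory of locally solvable skew linear groups'' is misplaced --- that paper treats normalizers of nilpotent subgroups of division rings and does not hand you a non-central characteristic subgroup with central commutator in the genuinely locally solvable (non-solvable) case; the derived-series trick you describe works only once $N_0$ is already solvable. The class-$2$ endgame is likewise only gestured at. So as written there is a genuine gap at exactly the point you yourself flagged: you are attempting to reprove a theorem rather than cite it.

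The paper's argument avoids all of this by using \cite{khanh-hai_almot} as a black box, and is essentially four lines. If $A\unlhd G$ is the locally solvable subgroup of finite index, then $A$ is itself almost subnormal in $D^*$, so \cite{khanh-hai_almot} gives $A\subseteq F$ immediately. Then $G/Z(G)$ is finite, Schur's theorem makes $G'$ finite, and Proposition~\ref{proposition_2.2} together with \cite[Theorem~8]{her} forces the finite almost subnormal subgroup $G'$ into $F$. Hence $G$ is solvable, and a second application of \cite{khanh-hai_almot} gives $G\subseteq F$. No preliminary passage to a subnormal subgroup is needed, and the locally solvable case is never reproved --- it is simply quoted.
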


\begin{Proof} 
	Let $A$ be a locally solvable normal subgroup of finite index in $G$. Since $A$ is an almost subnormal subgroup of $D^*$, it follows from \cite{khanh-hai_almot} that $A\subseteq F$. This implies that $G/Z(G)$ is finite and, by Schur's Theorem (\cite[Lemma 1.4, p.115]{passman_77}), we conclude that $G'$ is finite. This means that $G'$ is a finite almost subnormal subgroup of $D^*$. If $G'$ is non-central, then by Proposition \ref{proposition_2.2}, $G'$ contains a non-central finite subnormal subgroup of $D^*$, this contradicts  \cite[Theorem 8]{her}. Hence, $G'\subseteq F$, which means that $G$ is solvable. Therefore, the result follows from \cite{khanh-hai_almot}.
\end{Proof}

\begin{lemma}\label{lemma_2.5}
	Let $D$ be a division ring with center $F$, and $G$ an almost subnormal subgroup of $D^*$. Assume that $M$ is a non-abelian solvable-by-finite maximal subgroup of $G$. If $A$ is a normal subgroup of $M$, then either $A$ is abelian or $F(A)=D$.
\end{lemma}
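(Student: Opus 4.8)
The plan is to set $K = F(A)$, the division subring of $D$ generated by $A$ together with $F$. Since $A$ is normal in $M$, the group $M$ normalizes $K^{*}$, so $M \subseteq N_{G}(K^{*}) \subseteq G$; by maximality of $M$ we get the dichotomy $N_{G}(K^{*}) = G$ or $N_{G}(K^{*}) = M$, and I would split the argument along these two cases.

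If $N_{G}(K^{*}) = G$, then $G$ normalizes the division subring $K$. Since $M$ is non-abelian and contained in $G$, the subgroup $G$ cannot lie inside $F$ (a subgroup of $F^{*}$ is abelian), so $G$ is a non-central almost subnormal subgroup of $D^{*}$, and Corollary \ref{corollary_2.3} applies and yields $K \subseteq F$ or $K = D$. In the first case $A \subseteq K = F$, so $A$ is abelian; in the second $F(A) = D$. This disposes of the case.

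The remaining case $N_{G}(K^{*}) = M$ is the main one. Here I would first note that $G \cap K^{*} \subseteq N_{G}(K^{*}) = M$, because every element of $K^{*}$ normalizes $K^{*}$; in particular $A \subseteq G \cap K^{*}$. The crucial point is that $G \cap K^{*}$ is an almost subnormal subgroup of $K^{*}$: intersecting each term of an almost-subnormal chain from $G$ up to $D^{*}$ with the subgroup $K^{*}$ produces such a chain in $K^{*}$, since both ``being normal'' and ``having finite index'' are inherited by these intersections. Moreover $G \cap K^{*}$, being a subgroup of the solvable-by-finite group $M$, is itself solvable-by-finite and hence \emph{(locally solvable)-by-finite}. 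Applying Lemma \ref{lemma_2.4} to the division ring $K$ with center $Z(K)$ in place of $(D,F)$ therefore gives $G \cap K^{*} \subseteq Z(K)$. Since $A \subseteq G \cap K^{*}$ and $K = F(A)$, it follows that $A \subseteq Z(F(A))$, so every element of $A$ commutes with every element of $F(A) \supseteq A$; that is, $A$ is abelian.

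The step I expect to require the most care is the reduction in this last case: verifying that $G \cap K^{*}$ is genuinely almost subnormal in $K^{*}$, and that Lemma \ref{lemma_2.4} may legitimately be invoked over the (possibly still non-commutative, possibly much smaller) division ring $K$ with its own center. Once that setup is in place the conclusion is immediate, and the degenerate possibilities — for instance $K = F$, or $K$ a proper field extension of $F$ — cause no trouble, since in all of them $A$ is visibly abelian.
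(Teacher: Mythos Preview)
Your proof is correct and follows essentially the same route as the paper's own argument: both use the maximality of $M$ in $G$ to force the dichotomy on the normalizer of $F(A)^*$, invoke Corollary~\ref{corollary_2.3} when $G$ normalizes $F(A)$, and appeal to Lemma~\ref{lemma_2.4} inside the division ring $F(A)$ in the other case. The only cosmetic difference is that the paper applies Lemma~\ref{lemma_2.4} to $A$ itself (noting $A\unlhd F(A)^*\cap G$ since $F(A)^*\cap G\subseteq M$, hence $A$ is almost subnormal in $F(A)^*$), whereas you apply it to the slightly larger group $G\cap K^*$; both work for the same reason, and your care about the intersection chain preserving almost subnormality is exactly what is needed.
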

\begin{proof}
	The condition $A \unlhd M$ implies immediately that $F(A)$ is normalized by $M$ and so $M\subseteq N_{D^*}(F(A)^*)\cap G\subseteq G$. By virtue of maximality of $M$, we have either $N_{D^*}(F(A)^*)\cap G=M$ or $N_{D^*}(F(A)^*)\cap G=G$. If the former case occurs, then $F(A)^*\cap G$ is contained in $M$, which shows that $A $ is normal in $F(A)^*\cap G$. Consequently, $A$ is an almost subnormal subgroup of $F(A)^*$ contained in $M$. Since $M$ is solvable-by-finite, so is $A$. It follows from Lemma \ref{lemma_2.4} that $A$ is contained in the center of $F(A)$ and so $A$ is abelian. In the latter case, the division subring $F(A)$ is  normalized by $G$. It follows from Corollary \ref{corollary_2.3} that either $A\subseteq F$ or $F(A)=D$. In either case, we always have $A$ is abelian or $F(A)=D$. Our proof is now finished.	
\end{proof}

\begin{lemma}[{\cite[Proposition 4.1]{wehrfritz_07}}]\label{lemma_2.6}
	Let $D=E(A)$ be a division ring generated by its metabelian subgroup $A$ and its division subring $E$ such that $E\leq C_D(A)$. Set $H=N_{D^*}(A)$, $B=C_A(A')$, $K=E(Z(B))$, $H_1=N_{K^*}(A)=H\cap K^*$, and let $\tau(B)$ be the maximal periodic normal subgroup of $B$.
	\begin{enumerate}[font=\normalfont]
		\item If $\tau(B)$ has a quaternion subgroup $Q=\left\langle i,j\right\rangle $ of order $8$ with $A=QC_A(Q)$, then $H=Q^+AH_1$, where $Q^+=\left\langle Q,1+j,-(1+i+j+ij)/2\right\rangle$. Also, $Q$ is normal in $Q^+$ and $Q^+/{\left\langle -1,2\right\rangle}\cong\Aut Q\cong Sym(4)$.	
		\item If $\tau(B)$ is abelian and contains an element $x$ of order $4$ not in the center of $B$, then $H=\left\langle x+1\right\rangle AH_1$.
		\item In all other cases, $H=AH_1$.
	\end{enumerate}
\end{lemma}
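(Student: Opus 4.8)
The plan is to reprove this description of $H=N_{D^*}(A)$ by studying the automorphism of $A$ induced by conjugation by a generic element $g\in H$, and by showing that, after correcting $g$ by an inner automorphism (an element of $A$) and by an element of the abelian group $H_1=H\cap K^*$, only a finite amount of freedom survives, governed by the periodic part $\tau(B)$.

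\textbf{Reductions.} Since $A''=1$, the commutator subgroup $A'$ is abelian, so $A'\leq C_A(A')=B$ and in fact $A'\leq Z(B)$; moreover $A'$, $B$, $Z(B)$ and $\tau(B)$ are all characteristic in $A$ and hence normalised by $H$. As $E\leq C_D(A)\leq C_D(Z(B))$ and $Z(B)$ is abelian, the ring $E[Z(B)]$ is a commutative domain, so $K=E(Z(B))$ is a subfield of $D$, and $H_1=H\cap K^*\leq K^*$ is abelian. Conjugation defines a homomorphism $\pi\colon H\to\Aut(A)$ with $\pi(A)=\mathrm{Inn}(A)$ and $\ker\pi=C_{D^*}(A)$; one checks, using $D=E(A)$, that $\ker\pi\leq K^*$, so $\ker\pi\leq H_1$. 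Thus each of the three asserted equalities $H=Q^+AH_1$, $H=\langle x+1\rangle AH_1$, $H=AH_1$ is equivalent to a statement about the image $\pi(H)$ modulo $\pi(A)\pi(H_1)$, together with the identification of the relevant cosets.

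\textbf{The core reduction.} Fix $g\in H$. Conjugation by $g$ is a field automorphism of $F(A')\subseteq K$ and permutes $A$-conjugacy classes. The key claim is that $A$ already realises, by conjugation, every automorphism that $H$ induces on $Z(B)$ \emph{modulo} those fixing $\tau(B)$ pointwise: using $A/B\hookrightarrow\Aut(Z(B))$ and a Skolem--Noether-type rigidity one finds $a\in A$ whose action on $Z(B)$ agrees with that of $g$. Replacing $g$ by $ga^{-1}$ we may assume $g$ centralises $Z(B)$; then $g\in C_{D^*}(Z(B))\cap N_{D^*}(A)$, and a Cartan--Brauer--Hua/Stuth-type argument (in the spirit of Corollary~\ref{corollary_2.3}), together with the commutativity of $E[Z(B)]$, forces $g$ to differ from an element of $H_1$ by a unit that normalises $A$ and acts nontrivially only on the periodic part $\tau(B)$. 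Examining which automorphisms of $\tau(B)$ are realised this way by units of $D$ normalising $A$ --- but are neither inner nor already in $\pi(H_1)$ --- leaves exactly three possibilities, which are the three cases of the statement.

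\textbf{The three cases.} If $\tau(B)$ contains $Q=\langle i,j\rangle$ of order $8$ and $A=QC_A(Q)$ (a central product over $\langle -1\rangle$), then the residual automorphism is determined by its restriction to $Q$, i.e.\ by an element of $\Aut(Q)\cong\mathrm{Sym}(4)$; a direct computation with $i^2=j^2=-1$, $ji=-ij$ (using $(1+j)^{-1}=(1-j)/2$, $(1+j)^2=2j$, and $\bigl(-(1+i+j+ij)/2\bigr)^3=1$) shows that conjugation by $1+j$ and by $-(1+i+j+ij)/2$, together with $\mathrm{Inn}(Q)$, generate all of $\Aut(Q)$, while $\ker(\pi|_{Q^+})$ equals the central subgroup $\langle -1,2\rangle$ (note $2=(1+j)^2 j^{-1}\in Q^+$); hence $Q^+/\langle -1,2\rangle\cong\mathrm{Sym}(4)$ and $H=Q^+AH_1$. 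If $\tau(B)$ is abelian with an element $x$ of order $4$ not in $Z(B)$ (so $x^2=-1$), then $x+1$ normalises $A$ and induces an automorphism of $A$ whose square is the inner automorphism by $x$ but which is itself not of the form $\pi(ah_1)$ with $a\in A$, $h_1\in H_1$; it generates the one extra involution, so $H=\langle x+1\rangle AH_1$. In every remaining case no extra automorphism survives the corrections, and $H=AH_1$.

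\textbf{Main obstacle.} The delicate step is the rigidity claim in the core reduction: proving that conjugation by an element of $D^*$ normalising $A$ cannot induce on $A$ any automorphism beyond those coming from $A$, from $K^*$, and from the explicit units $1+j$, $-(1+i+j+ij)/2$ (resp.\ $x+1$). This is precisely where the ambient division-ring structure is indispensable; it requires a Cartan--Brauer--Hua/Stuth-type statement forcing a unit that normalises $A$ and centralises the ``generic'' part $Z(B)$ into the commutative overfield $K$, up to the finite $\tau(B)$-ambiguity.
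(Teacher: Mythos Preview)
The paper does not prove Lemma~\ref{lemma_2.6}; it is quoted verbatim from \cite[Proposition~4.1]{wehrfritz_07} and only \emph{applied} in the proof of Theorem~\ref{theorem_2.7}. There is therefore no proof in the paper against which to compare your attempt.

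Judged on its own merits, your proposal is an outline rather than a proof, and you effectively concede this in your final paragraph. The architecture is the right one: reduce the description of $H$ to the image of $\pi\colon H\to\Aut(A)$ modulo $\pi(A)\pi(H_1)$, and then classify the residual automorphisms according to the shape of $\tau(B)$. Your explicit computations in the quaternion case---that conjugation by $1+j$ and by $-(1+i+j+ij)/2$ together with $\mathrm{Inn}(Q)$ generate $\Aut(Q)\cong\mathrm{Sym}(4)$, and that the kernel on $Q^+$ is $\langle -1,2\rangle$---are correct. But the two steps you label ``Skolem--Noether-type rigidity'' and ``Cartan--Brauer--Hua/Stuth-type argument'' carry the entire weight of the lemma, and neither is substantiated. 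Concretely: you assert that for every $g\in H$ there exists $a\in A$ with $ga^{-1}$ centralising $Z(B)$. This is equivalent to saying that $H$ and $A$ induce the \emph{same} group of automorphisms on $Z(B)$, which is far from automatic; $H$ normalises $Z(B)$, but nothing you have written prevents it from inducing an automorphism lying outside the image of $A$. Likewise, once $g$ centralises $Z(B)$ you have $g\in C_{D^*}(Z(B))$, but $C_D(Z(B))$ is in general strictly larger than $K$ (it contains all of $B$, for instance), so there is no direct Cartan--Brauer--Hua mechanism forcing $g$ into $K^*$ up to a finite correction. Wehrfritz's actual argument in \cite{wehrfritz_07} rests on a detailed structure theory for metabelian subgroups of division rings built up over several of his earlier papers, and that machinery is precisely what fills the gap you have isolated; it cannot be replaced by a one-line appeal to CBH or Stuth.

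A smaller point: your claim that $\ker\pi=C_{D^*}(A)\leq K^*$ also requires justification. Even when $E$ is commutative (as it is in the paper's sole application, where $E=F$), one must still argue that $C_D(A)\subseteq E(Z(B))$; this uses $D=E(A)$ together with $Z(A)\subseteq Z(B)$, but it is not immediate.
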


\begin{theorem}\label{theorem_2.7}
	Let $D$ be a division ring with center $F$, and $G$ an almost subnormal subgroup of $D^*$. If $M$ is a non-abelian solvable-by-finite maximal subgroup of $G$, then $M$ is abelian-by-finite and $[D:F]<\infty$.
\end{theorem}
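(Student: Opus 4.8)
The plan is to combine a maximality/normalizer dichotomy with Wehrfritz's structure theorem (Lemma~\ref{lemma_2.6}), which will carry the main weight. First I would make the obvious reductions: if $D$ were commutative then $M\le F^{*}$ would be abelian, against the hypothesis, so $D$ is non-commutative; and since $M$ is non-abelian, $G$ is non-central. Fix a solvable normal subgroup $N$ of finite index in $M$. If $N$ is non-abelian, the terms of its derived series $N\unrhd N'\unrhd N''\unrhd\cdots$ are characteristic in $N$, hence normal in $M$, and taking the last non-abelian one yields a \emph{non-abelian metabelian} subgroup $A\unlhd M$. (If $N$ is abelian then $M$ is already abelian-by-finite and only the dimension statement survives; I would dispose of this degenerate case at the very end.) By Lemma~\ref{lemma_2.5} the non-abelian $A$ forces $F(A)=D$, so $D$ is generated over its centre by the metabelian group $A$.

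Now apply Lemma~\ref{lemma_2.6} with $E=F$ (legitimate since $F=Z(D)\le C_D(A)$). Writing $B=C_A(A')$, $K=F(Z(B))$, $H=N_{D^{*}}(A)$ and $H_1=H\cap K^{*}$, Lemma~\ref{lemma_2.6} gives in every one of its three cases that $[H:AH_1]\le 24$, while $H_1\le K^{*}$ is abelian because $K$ is a field. Since $A\unlhd M$ we have $M\le N_{D^{*}}(A)\cap G=H\cap G\le G$, so maximality of $M$ in $G$ leaves exactly two possibilities, $H\cap G=M$ or $H\cap G=G$.

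Suppose $H\cap G=G$. Then $G\le H$, and one checks $(AH_1)\cap G=A(H_1\cap G)$, a subgroup of $G$ of index at most $[H:AH_1]\le 24$; it is solvable of derived length at most $3$ (it has the metabelian normal subgroup $A$ with abelian quotient). Thus $G$ is solvable-by-finite, and being almost subnormal in $D^{*}$ it must satisfy $G\subseteq F$ by Lemma~\ref{lemma_2.4}, contradicting that $G$ is non-central. Hence $H\cap G=M$, and consequently $M$ contains the finite-index subgroup $A(H_1\cap G)$. This already re-proves that $M$ is solvable-by-finite; the remaining work is to upgrade this to abelian-by-finite and to extract $[D:F]<\infty$. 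For the upgrade I would run the same normalizer/maximality dichotomy again, now on $A'\unlhd M$ (and on further abelian normal subgroups produced along the way): using that a division subring generated over $F$ by an abelian subgroup is a field, together with Corollary~\ref{corollary_2.3} in the ``$=G$'' branch and Lemma~\ref{lemma_2.4} (applied inside the smaller division ring) in the ``$=M$'' branch, one locates a maximal subfield $K\supseteq F$ of $D$ such that $L:=K^{*}\cap G$ is abelian and normal in $M$; combining this with the explicit shape of $H$ in the three cases of Lemma~\ref{lemma_2.6} (the finite factors $Q^{+}$, $\langle x+1\rangle$, etc.) should give $[M:L]<\infty$. Finally, $D=F(M)$ is generated over $F(L)=K$ by a transversal of $L$ in $M$, each member of which normalizes $K$; a crossed-product/Galois computation bounds $[D:K]$ and then the maximal-subfield condition bounds $[K:F]$, so $[D:F]<\infty$. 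The leftover ``$N$ abelian'' case is handled by the same $F(N)$-dichotomy, with Schur's theorem and \cite{her} disposing of the sub-branches in which the relevant normal subgroup turns out to be central, exactly as in the proof of Lemma~\ref{lemma_2.4}.

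The step I expect to be the real obstacle is this last stretch: converting ``metabelian-by-finite'' into ``abelian-by-finite'' while simultaneously pinning down $[D:F]$. The recurring difficulty is the \emph{central sub-case} in each application of the dichotomy --- when $F(-)$ of some normal subgroup lands inside $F$ one gets no contradiction at all (the real quaternions with $A$ the quaternion group of order $8$ realise precisely such a configuration, with the maximal subgroup $\C^{*}\cup\C^{*}j$), so instead of a contradiction one must exhibit the maximal subfield $K$ and verify by hand that $K^{*}\cap G$ is the desired abelian normal subgroup of finite index. Getting this to work uniformly across the three cases of Lemma~\ref{lemma_2.6}, together with the final dimension count, is the technical heart of the argument.
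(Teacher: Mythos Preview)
Your setup is sound and matches the paper's: reduce to $F(A)=D$ for a non-abelian metabelian $A\unlhd M$, then invoke Lemma~\ref{lemma_2.6}. Your dichotomy on $H\cap G$ and the elimination of the branch $H\cap G=G$ via Lemma~\ref{lemma_2.4} are correct, and the uniform bound $[H:AH_1]\le 24$ is valid. But the step you yourself flag as the obstacle really is one, and your sketch does not close it. Iterating the normalizer dichotomy on $A'$ in the hope of landing on a maximal subfield $K$ with $[M:K^{*}\cap G]<\infty$ is not justified: in the ``$=M$'' branch you propose to apply Lemma~\ref{lemma_2.4} inside $F(A')$, but nothing shows the relevant subgroup is almost subnormal in $F(A')^{*}$; and even when the ``$=G$'' branch forces $A'\subseteq F$, you only get $A$ nilpotent of class~$2$, not an abelian subgroup of finite index in $M$. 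The crossed-product/Galois endgame you describe is the right shape for Theorem~\ref{theorem_1.1}, but its hypotheses are precisely what remains unproved here.

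The paper does not attempt a uniform argument. It keeps the three cases of Lemma~\ref{lemma_2.6} separate and uses a different device in each. In Case~(1) ($\tau(B)$ non-abelian), $\tau(B)$ is characteristic in $A$ hence normal in $M$, so Lemma~\ref{lemma_2.5} gives $F(\tau(B))=D$; since $\tau(B)$ is periodic solvable it is locally finite, so $D$ is a locally finite division ring, and a free-subgroup criterion (\cite[Theorem~3.1]{hai-khanh}) then yields both $[D:F]<\infty$ and abelian-by-finite at once. In Case~(2), the cyclic group $\langle x\rangle$ is the $2$-primary component of $\tau(B)$, hence characteristic and so normal in $M$; all $M$-conjugates of $x$ then lie in the algebraic extension $F(x)$ and share a minimal polynomial, so $x$ is an FC-element, $[M:C_M(x)]<\infty$, and $C:=\core_M(C_M(x))$ is abelian by Lemma~\ref{lemma_2.5} (the alternative $F(C)=D$ would force $x\in F$). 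In Case~(3), from $A'\subseteq H_1\cap A$ one gets $H'\subseteq H_1$ abelian, so $M\subseteq H$ is already \emph{metabelian}, and the paper invokes its earlier metabelian result \cite[Proposition~3.7]{khanh-hai_almot}. None of these three case-specific moves appears in your outline, and they are what actually deliver ``abelian-by-finite'' and $[D:F]<\infty$.

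Finally, the ``$N$ abelian'' case is simpler than you suggest: with $L=F(N)$ a subfield normalized by $M$ and $\{x_1,\dots,x_k\}$ a transversal of $N$ in $M$, the set $\Delta=\sum_i Lx_i$ is a domain with $\dim_L\Delta\le k$, hence a centrally finite division ring containing $F$ and $M$, so $\Delta=D$. No Schur/Herstein detour is needed.
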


\begin{Proof} 
	It follows from Lemma \ref{lemma_2.5} that $F(M)=D$. Let $N$ be a solvable normal subgroup of finite index in $M$. In the case where $N$ is abelian, then, of course, $M$ is abelian-by-finite and $L=F(N)$ is a subfield of $D$ normalized by $M$. Take a transversal $\{x_1,x_2,\ldots,x_k\}$ of $N$ and set
	$$\Delta=Lx_1+Lx_2+\cdots+Lx_k.$$
	The displayed relation provides that $\Delta$ is a domain with $\dim_L\Delta\leq k$. The implication of this fact is that $\Delta$ is a centrally finite division ring. Since $\Delta$ contains both $F$ and $M$, it must be coincided with $D$. 
	
	To settle the whole theorem, there remains to examine the case where $N$ is a non-abelian. Therefore, we may suppose that $N$ is solvable with derived series of length $s\geq2$. In other words, we have the following series.
	$$1=N^{(s)}\unlhd N^{(s-1)}\unlhd N^{(s-2)}\unlhd\cdots\unlhd N'\unlhd  N \unlhd M.$$
	If we set $A=N^{(s-2)}$, then $A$ is a non-abelian metabelian normal subgroup of $M$. As $A$ is non-abelian, Lemma \ref{lemma_2.5} again says that $F(A)=D$, from which it follows that that $Z(A)=F^*\cap A$ and $F=C_D(A)$. Set $H=N_{D^*}(A)$,  $B=C_A(A')$, $K=F(Z(B))$, $H_1=H\cap K^*$, and $\tau(B)$ to be the maximal periodic normal subgroup of $B$. Then, it is a simple matter to check that $H_1$ is an abelian group and $\tau(B)$ is characteristic in $B$. It follows from \cite[Lemma 3.8]{khanh-hai_almot} that $\tau(B)$ is characteristic in $A$. In order to use Lemma \ref{lemma_2.6}, we divide our situation into three cases:
	
	\bigskip
	
	\textit{Case 1. $\tau(B)$ is not abelian.}
	
	\bigskip
	
	Since $\tau(B)$ is characteristic in $B$ and $B$ is normal in $M$, we conclude that $\tau(B)$ is normal in $M$. By virtue of Lemma \ref{lemma_2.5}, we have  $F(\tau(B))=D$. In addition, as $\tau(B)$ is solvable and periodic, it is actually a locally finite group (\cite[Lemma 2.12]{khanh-hai_almot}). It follows that $D=F(\tau(B))=F[\tau(B)] $ is a locally finite division ring. Since $M$ is solvable-by-finite, it contains no non-cyclic free subgroups. With reference to \cite[Theorem 3.1]{hai-khanh}, we deduce that $[D:F]<\infty$ and $M$ is abelian-by-finite.
	
	\bigskip	
	
	\textit{Case 2. $\tau(B)$ is abelian and contains an element $x$ of order $4$ not in the center of $B=C_A(A')$.}
	
	\bigskip
	
	Since  $x\not\in Z(B)$, it does not belong to $F$. The condition $x$ is of finite order implies that the field $F(x)$ is an algebraic extension of $F$.  Note that $\left\langle x\right\rangle$ is indeed a $2$-primary component of $\tau(B)$ (see \cite[Theorem 1.1, p.132]{wehrfritz_07}); thus, it is a characteristic subgroup of $\tau(B)$. Consequently, $\left\langle x\right\rangle$ is a normal subgroup of $M$. Therefore, all elements of the set $x^M=\{m^{-1}xm\vert m\in M\}\subseteq F(x)$ have the same minimal polynomial over $F$. As a result, $x$ is an $FC$-element and so $[M:C_M(x)]<\infty$. Now, if we set $C=\core_M(C_M(x))$, then $C$ is a normal subgroup of finite index in $ M$. In view of Lemma \ref{lemma_2.5}, either $F(C)=D$ or $C$ is abelian. The first case cannot occur since it implies that $x\in F$, which is impossible. Therefore $C$ is abelian and, in consequence, $M$ is abelian-by-finite.  By the same arguments used in the first paragraph, we conclude that $D$ is centrally finite.
	
	\bigskip	
	
	\textit{Case 3. $H=AH_1$.}
	
	\bigskip
	
	The fact $A'\subseteq H_1\cap A$ implies that $H/H_1\cong A/A\cap H_1$ is abelian and so $H'\subseteq H_1$. Since $H_1$ is abelian, it follows that $H'$ is abelian too. Because $M\subseteq H$, we know that $M'$ is also abelian. In other words, $M$ is a metabelian group; hence, the conclusions follow from \cite[Proposition 3.7]{khanh-hai_almot}.
\end{Proof}

\begin{lemma}[{\cite[4.5.1]{shirvani-wehrfritz}}]\label{lemma_2.8}
	Let $D$ be a division ring that is not a locally finite field, and $n\geq 2$ an integer. If $N$ is a non-central normal subgroup of $\GL_n(D)$, then $N$ contains a non-cyclic free subgroup.
\end{lemma}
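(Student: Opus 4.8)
The plan is to reduce the problem, via the classical description of the normal subgroups of $\GL_n(D)$, to producing a non-cyclic free subgroup inside $\SL_2$ of a well-chosen subfield of $D$, and then to invoke the standard ping-pong construction for such groups.

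\emph{Step 1 (pass to the elementary subgroup).} First I would observe that since $D$ is not a locally finite field it is in particular infinite: a finite division ring is a finite field by Wedderburn's little theorem, hence locally finite, so $D\neq\mathbb F_2,\mathbb F_3$. Let $E_n(D)$ be the subgroup of $\GL_n(D)$ generated by all elementary transvections $\I_n+e_{ij}(d)$ with $i\neq j$ and $d\in D$. By the classical description of the normal subgroups of $\GL_n(D)$ (see \cite{shirvani-wehrfritz}; the well-known exceptions occur only for $n=2$ with $|D|\le 3$, which is excluded here), every subgroup of $\GL_n(D)$ normalized by $E_n(D)$ is either central or contains $E_n(D)$, so the non-central normal subgroup $N$ satisfies $N\supseteq E_n(D)$. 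If a self-contained argument is preferred, the same conclusion is reached directly: $N$ contains a non-central element $g$, hence contains a commutator $[g,\I_n+e_{ij}(d)]$, and the usual manipulations by diagonal and permutation matrices and by further transvections force all transvections into $N$.

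\emph{Step 2 (find a good subfield).} Next I would produce a subfield $F_0\le D$ that is itself not a locally finite field. If $\operatorname{char} D=0$, take $F_0=\mathbb Q$. If $\operatorname{char} D=p>0$, then $D$ must contain an element $t$ transcendental over $\mathbb F_p$: otherwise every $x\in D$ would satisfy $x^{q}=x$ for some $q=q(x)>1$, and Jacobson's commutativity theorem would force $D$ to be commutative, hence an algebraic extension of $\mathbb F_p$, i.e.\ a locally finite field, contrary to hypothesis; the subring generated by $t$ is then a copy of $\mathbb F_p(t)$, which we take as $F_0$. In either case $F_0$ is infinite and not an algebraic extension of a finite field, and $|F_0|>3$, so $E_2(F_0)=\SL_2(F_0)$. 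Placing $\GL_2(F_0)$ in the top-left corner we get $\SL_2(F_0)\le E_2(D)\le E_n(D)\le N$, so it now suffices to exhibit a non-cyclic free subgroup of $\SL_2(F_0)$.

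\emph{Step 3 (a free subgroup of $\SL_2(F_0)$).} If $\operatorname{char} F_0=0$ this is classical: $\SL_2(\mathbb Z)\le\SL_2(F_0)$ and the principal congruence subgroup $\Gamma(2)$ is free of rank $2$. If $F_0=\mathbb F_p(t)$, then $\left(\begin{smallmatrix}1&t\\0&1\end{smallmatrix}\right)$ and $\left(\begin{smallmatrix}1&0\\t&1\end{smallmatrix}\right)$ generate a free group of rank $2$; this is proved by a ping-pong argument on the two halves of $\mathbb P^1$ cut out by the $t^{-1}$-adic valuation of $\mathbb F_p(t)$ (equivalently, from the action of $\SL_2(\mathbb F_p[t])$ on the Bruhat--Tits tree of $\mathbb F_p((t^{-1}))$), and it is the form in which the field case of the lemma is usually established (compare Tits' alternative). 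Combining the three steps gives the claim.

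The part I expect to carry the real content is Step 3, the explicit production of the free group: in positive characteristic the verification of the ping-pong hypotheses — equivalently, the identification of $\langle\left(\begin{smallmatrix}1&t\\0&1\end{smallmatrix}\right),\left(\begin{smallmatrix}1&0\\t&1\end{smallmatrix}\right)\rangle$ as a free product of two infinite cyclic groups — is where all the work sits. Steps 1 and 2 are essentially bookkeeping, the only subtle points being the exclusion of the small-field exceptions in the normal-subgroup theorem (automatic since $D$ is infinite) and the appeal to Jacobson's theorem to guarantee a transcendental element, hence a non-locally-finite subfield, inside any division ring that is not a locally finite field.
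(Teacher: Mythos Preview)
The paper does not give a proof of this lemma at all; it is simply quoted from \cite[4.5.1]{shirvani-wehrfritz}, so there is no ``paper's proof'' to compare with. Your Steps~1 and~2 are essentially correct (the only slip is that the sub\emph{ring} generated by $t$ is $\mathbb F_p[t]$; what you need is the sub\emph{field} it generates inside $D$, which is indeed $\mathbb F_p(t)$ because the prime subfield is central). Step~3, however, contains a genuine error in positive characteristic.

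In characteristic $p>0$ the matrices
\[
A=\begin{pmatrix}1&t\\0&1\end{pmatrix},\qquad
B=\begin{pmatrix}1&0\\t&1\end{pmatrix}
\]
are unipotent, hence satisfy $A^{p}=B^{p}=\I_2$. They therefore cannot generate a free group of rank $2$, and $\langle A,B\rangle$ is certainly not a ``free product of two infinite cyclic groups''. What the $t^{-1}$-adic ping-pong actually proves is $\langle A,B\rangle\cong \Z/p\ast\Z/p$. For $p\ge 3$ this is enough, since the kernel of the map to $\Z/p\times\Z/p$ is free of rank $(p-1)^{2}\ge 4$; but for $p=2$ you obtain the infinite dihedral group, which is virtually cyclic and contains \emph{no} non-cyclic free subgroup. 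So in characteristic $2$ your argument collapses entirely. A clean repair is to replace the unipotents by hyperbolic elements of infinite order: set $h=\mathrm{diag}(t,t^{-1})$ and take $h$ together with a conjugate such as $uhu^{-1}$ with $u=\left(\begin{smallmatrix}0&1\\1&1\end{smallmatrix}\right)$; these act as independent loxodromics on the Bruhat--Tits tree of $\mathbb F_p((t^{-1}))$, and ping-pong on the tree (after passing to suitable powers) yields a free group of rank $2$. Alternatively, one may invoke Nagao's amalgam decomposition of $\GL_2(\mathbb F_p[t])$, or simply the Tits alternative for $\SL_2$ over $\mathbb F_p(t)$. With any of these fixes your outline goes through.
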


\begin{lemma}\label{lemma_2.9}
	Let $D$ be a division ring with center $F$ containing at least four elements, $G$ a normal subgroup of $\GL_n(D)$ with $n\geq2$.  Assume that $M$ is a maximal subgroup of $G$. If $A$ is an $F$-subalgebra of $\M_n(D)$ normalized by $M$, then either $A^*\cap G\subseteq M$ or $A=\M_n(D)$ provided $A^*\cap G\not\subseteq F$.
\end{lemma}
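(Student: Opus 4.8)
The plan is to use the maximality of $M$ to reduce to a single case and there to invoke the classical normal-subgroup (``sandwich'') theorem for $\GL_n(D)$, which then forces $A=\M_n(D)$.

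Set $N=A^*\cap G$, where $A^*$ denotes the group of units of the ring $A$; a unit of $A$ has its inverse in $A\subseteq\M_n(D)$, so $A^*\subseteq\GL_n(D)$. Since $M$ normalizes $A$, conjugation by any $m\in M$ induces a ring automorphism of $A$ and hence permutes its units, so $M$ normalizes $A^*$; and since $m\in G\unlhd\GL_n(D)$, $m$ also normalizes $G$. Hence $M$ normalizes $N$, so $MN=NM$ is a subgroup of $G$ containing $M$, and the maximality of $M$ gives either $MN=M$ or $MN=G$. In the first case $N\subseteq M$, which is the first alternative, and we are done; so from now on assume $MN=G$.

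The hypothesis $A^*\cap G\not\subseteq F$ says that $N$ is non-central, hence $G$ is a non-central normal subgroup of $\GL_n(D)$ (it contains $N$). By the normal-subgroup theorem for $\GL_n(D)$ over a division ring (see \cite{shirvani-wehrfritz}), which for $n\geq2$ is valid once $|D|>3$, every non-central normal subgroup of $\GL_n(D)$ contains the elementary subgroup $\E_n(D)$, and, more generally, every subgroup of $\GL_n(D)$ normalized by $\E_n(D)$ is either central or contains $\E_n(D)$. The first statement gives $\E_n(D)\subseteq G$. Next, since $MN=G$, the subgroup $N$ is normalized by $M$ and by $N$, hence by $G=MN$, so $N\unlhd G$; as $\E_n(D)\subseteq G$ it follows that $\E_n(D)$ normalizes $N$. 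Since $N$ is non-central, the second statement gives $\E_n(D)\subseteq N$, whence $F[N]\supseteq F[\E_n(D)]=\M_n(D)$, the last equality holding because $\E_n(D)$ contains all elementary transvections and these already generate $\M_n(D)$ as an $F$-algebra. Since $A$ is an $F$-subalgebra of $\M_n(D)$ with $N\subseteq A$, we get $\M_n(D)=F[N]\subseteq A$, that is $A=\M_n(D)$, the second alternative.

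The hypothesis $|F|\geq4$ is used only to ensure $|D|>3$ (a finite division ring is a field by Wedderburn, and a non-commutative one is infinite), which is exactly what the normal-subgroup theorem requires in the delicate case $n=2$: over the two-element field it fails, since $\GL_2(\mathbb{F}_2)\cong S_3$ has a non-central normal subgroup of order $3$ that is irreducible but not absolutely irreducible. Granting the normal-subgroup / sandwich theorem, the remainder is a short normalizer-and-maximality argument; I therefore expect the only real work to lie in quoting that theorem with the correct hypotheses, or equivalently in citing the fact that a non-central subnormal subgroup of $\GL_n(D)$ with $n\geq2$ and $|D|>3$ is absolutely irreducible over $D$.
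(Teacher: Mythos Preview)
Your proof is correct and follows essentially the same route as the paper's. The paper applies maximality to the intermediate subgroup $N_{\GL_n(D)}(A^*)\cap G$ rather than to $MN$, and in the non-trivial case concludes that $A^*\cap G$ is a non-central (sub)normal subgroup of $\GL_n(D)$, then quotes \cite[Theorem~11]{mah98} to get $\SL_n(D)\subseteq A^*$ and the matrix Cartan--Brauer--Hua theorem to finish; your use of $MN$ and of the sandwich theorem with $\E_n(D)$ in place of $\SL_n(D)$ is just a different packaging of the same argument.
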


\begin{Proof}
	Since $M$ normalizes $A$, we have $M\subseteq N_{\GL_n(D)}(A^*)\cap G \subseteq G$. Since $M$ is maximal in $G$, we have either $M = N_{\GL_n(D)}(A^*)\cap G$ or $N_{\GL_n(D)}(A^*)\cap G = G$. If the first case occurs, then $G\cap A^*\subseteq M$. Now, suppose that $N_G(A^*) = G$ and that $A^*\cap G\not\subseteq F$. It is clear for this case that $A^*\cap G$ is a non-central normal subgroup of $\GL_n(D)$, from which it follows that $\SL_n(D)\subseteq A^*$ (\cite[Theorem 11]{mah98}). The consequence of this fact is that $A$ contains the subring $F[\SL_n(D)]$, which is normalized by $\GL_n(D)$. According to Cartan-Brauer-Hua Theorem for the matrix ring (see e.g. \cite[Theorem D]{dorbidi2011}), we obtain that $A=\M_n(D)$.
\end{Proof}

\begin{lemma}\label{lemma_2.10}
	Let $D$ be an infinite division ring with center $F$, and $n\geq 1$. If  $\SL_n(D)$ is (locally solvable)-by-finite, then $n=1$ and $D=F$.
\end{lemma}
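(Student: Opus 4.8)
The plan is to separate the two conclusions. For $n\ge 2$ I will show that $\SL_n(D)$ is never (locally solvable)-by-finite, so the hypothesis already forces $n=1$; and then for $n=1$ I will deduce $D=F$. The observation used repeatedly is that a (locally solvable)-by-finite group $G$ contains no non-cyclic free subgroup: such a subgroup would contain a copy $W$ of the free group of rank $2$, and if $A\unlhd G$ is locally solvable of finite index then $W\cap A$ has finite index in $W$, hence is free of rank $\ge 2$, hence is a finitely generated non-solvable subgroup of the locally solvable group $A$, which is absurd. (The same argument shows every finitely generated subgroup of such a $G$ is solvable-by-finite.)

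Suppose $n\ge 2$. If $D$ is not a locally finite field, then $\SL_n(D)$ is a normal subgroup of $\GL_n(D)$ which is non-central (it contains the transvection $\I_n+E_{12}$, which is not a scalar matrix), so Lemma~\ref{lemma_2.8} provides a non-cyclic free subgroup of $\SL_n(D)$, contradicting the observation above. If $D$ is a locally finite field, then $D=F$ has some prime characteristic $p$ and is the union of its finite subfields, which include fields $\mathbb{F}_q$ of arbitrarily large order. Assume $A\unlhd\SL_n(F)$ is locally solvable of finite index $m$; then for every finite subfield $\mathbb{F}_q\subseteq F$ the subgroup $A\cap\SL_n(\mathbb{F}_q)$ is normal of index at most $m$ in $\SL_n(\mathbb{F}_q)$. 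For $q\ge 4$ the group ${\rm PSL}_n(\mathbb{F}_q)$ is a non-abelian finite simple group and $\SL_n(\mathbb{F}_q)$ is perfect with finite center; since $|{\rm PSL}_n(\mathbb{F}_q)|\to\infty$, for $q$ large the image of $A\cap\SL_n(\mathbb{F}_q)$ in ${\rm PSL}_n(\mathbb{F}_q)$ cannot be trivial, hence is everything, whence (the quotient of $\SL_n(\mathbb{F}_q)$ by $A\cap\SL_n(\mathbb{F}_q)$ is then an abelian quotient of a perfect group, so trivial) $A\supseteq\SL_n(\mathbb{F}_q)$. But $\SL_n(\mathbb{F}_q)$ is a finitely generated non-solvable group, so it cannot sit inside the locally solvable group $A$. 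Hence no such $A$ exists and $\SL_n(D)$ is not (locally solvable)-by-finite.

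Now let $n=1$, so $\SL_1(D)=[D^*,D^*]$, which we denote $D'$; this is a normal, hence almost subnormal, subgroup of $D^*$. If $D'$ is (locally solvable)-by-finite, Lemma~\ref{lemma_2.4} gives $D'\subseteq F$, and I claim this forces $D=F$. Otherwise, pick $a\in D\setminus F$ and put $K=F(a)$, a subfield of $D$ strictly containing $F$. For every $b\in D^*$ one has $bab^{-1}a^{-1}\in D'\subseteq F\subseteq K$, so $bab^{-1}\in K$; as $K$ is generated over $F$ by $a$ and $bFb^{-1}=F$, it follows that $bKb^{-1}\subseteq K$, and the same applied to $b^{-1}$ gives $bK^*b^{-1}=K^*$. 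Thus $K^*$ is normalized by $D^*$, which is a non-central (since $D\ne F$) almost subnormal subgroup of itself, so Corollary~\ref{corollary_2.3} yields $K\subseteq F$ or $K=D$; since $K\supsetneq F$ we must have $K=D$, so $D$ is commutative, contradicting $a\notin F$. Therefore $D=F$, and the lemma is proved.

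The step I expect to be the main obstacle is the locally finite field case with $n\ge 2$: there $D$ is infinite but $\SL_n(D)$ contains no free subgroup at all, so Lemma~\ref{lemma_2.8} is useless and one must instead use the finite-group structure of $\SL_n$ over the finite subfields — concretely, the simplicity of ${\rm PSL}_n(\mathbb{F}_q)$ for $q\ge 4$ together with $|{\rm PSL}_n(\mathbb{F}_q)|\to\infty$, which jointly forbid a finite-index locally solvable subgroup. Everything else is routine once the no-non-cyclic-free-subgroup observation is in hand.
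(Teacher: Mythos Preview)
Your proof is correct. The overall architecture---show $n\ge 2$ is impossible, then handle $n=1$ via Lemma~\ref{lemma_2.4}---matches the paper's, but the internal arguments differ in two places worth noting.

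For $n\ge 2$, the paper splits according to whether the locally solvable normal subgroup $G$ of finite index in $\SL_n(D)$ is central or not. If $G$ is non-central then \cite[Theorem~11]{mah98} forces $G=\SL_n(D)$, so $\SL_n(D)$ is locally solvable; Lemma~\ref{lemma_2.8} then makes $D$ a locally finite field, Mal'cev's theorem upgrades ``locally solvable'' to ``solvable'', and perfection of $\SL_n(D)$ gives the contradiction. If $G$ is central, the paper again reduces to $D$ a locally finite field and then invokes the well-known fact that $\SL_n(F)$ has no proper subgroup of finite index for $F$ infinite. You instead split according to whether $D$ is a locally finite field, and in the locally finite case you bypass both Mal'cev's theorem and the no-finite-index fact by working inside $\SL_n(\mathbb{F}_q)$ for large finite subfields $\mathbb{F}_q\subseteq F$ and using simplicity of ${\rm PSL}_n(\mathbb{F}_q)$ together with perfection of $\SL_n(\mathbb{F}_q)$. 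Your route is more self-contained (no external black boxes beyond Lemma~\ref{lemma_2.8}); the paper's is shorter once one is willing to cite those two standard facts.

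For $n=1$, both proofs reach $D'\subseteq F$ via Lemma~\ref{lemma_2.4}. The paper then finishes in one line (``$D^*$ solvable, hence $D=F$''), implicitly using the classical result that a division ring with solvable multiplicative group is a field. You instead unfold a Cartan--Brauer--Hua argument through Corollary~\ref{corollary_2.3}, which is again more explicit and avoids an outside citation.
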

\begin{proof}
	For a proof by contradiction, we assume that $n>1$. Set $S=\SL_n(D)$. From our hypthesis, $S$ contains a locally solvable normal subgroup $G$ such that $S/G$ is finite. If $G$ is not contained in $F$, then $G=S$  (\cite[Theorem 11]{mah98}), and so $S$ is locally solvable.  By Lemma \ref{lemma_2.8}, we deduce that $D$ is a field, which says that $S$ is indeed a solvable linear group; recall that every locally solvable linear group is solvable. But this leads at once to the contradiction that $S$ is a perfect group. We may therefore assume that $G\subseteq F$, from which it follows that $S/S\cap F^*$ is finite. A similar argument permits us to conclude that $F$ is again a field. Then, we also have a contradiction since it is well-known that for an infinite field $F$, the special linear group $\SL_n(F)$ contains no proper subgroups of finite index. These contradictions imply that $n=1$ and, in consequence, we have $D'$ is (locally solvable)-by-finite. With reference to Lemma \ref{lemma_2.4}, we conclude that $D'$ is contained in $F$. As a result, $D^*$ is solvable, yielding that $D=F$. The proof is completed.
\end{proof}
\begin{lemma}\label{lemma_2.11}
	Let $D$  be an infinite division ring with center $F$, and $G$ a normal subgroup of $\GL_n(D)$. Assume that $M$ is a non-abelian maximal subgroup of $G$, and that $N$ is a subnormal subgroup of $M$. If either
	\begin{enumerate}[font=\normalfont]
		\item $M$ is (locally solvable)-by-finite, $D$ is non-commutative, and $n\geq 2$ , or
		\item  $M$ is solvable, $D$ is a field, and $n\geq 5$,
	\end{enumerate}
then the followings hold:
	\begin{enumerate}[label=\textnormal{*}, font=\normalfont]
	\item[(i)] $M$ is primitive,
	\item[(ii)] $C_{\M_n(D)}(M)$ is a field,
	\item[(iii)] either $N$ is abelian or $F[N]$ is a prime and Goldie ring whose the classical right quotient ring is coincided with $\M_n(D)$,
	\item[(iv)]  $C_{\M_n(D)}(N)$ is a simple  artinian ring. 
\end{enumerate}
\end{lemma}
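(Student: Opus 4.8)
\emph{The plan.} I would first fix the ambient picture. Since $M$ is non-abelian it is non-central, hence so is $G$; as $G\unlhd\GL_n(D)$ with $n\ge 2$ and $D$ infinite, the description of normal subgroups (\cite[Theorem 11]{mah98}) gives $\SL_n(D)\subseteq G$, so $F[G]=\M_n(D)$, and $\SL_n(D)$ — hence $G$ — is primitive and has no proper subgroup of finite index (its quotient by the finite centre is an infinite simple group). By Lemma \ref{lemma_2.10}, $\SL_n(D)$ is not (locally solvable)-by-finite in case (1), resp.\ not solvable in case (2), so $\SL_n(D)\not\subseteq M$ and maximality forces $G=M\SL_n(D)$. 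All of this I would record at the outset.

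\emph{Part (i).} I would prove irreducibility and primitivity by a ``parabolic'' argument. If $M$ stabilised a $D$-subspace $0\ne W\subsetneq V$, then $\mathrm{Stab}_G(W)$ is a subgroup of $G$ containing $M$ which is $\ne G$ (else $W$ would be a proper nonzero $D$-$G$-subbimodule), hence equals $M$ by maximality; but after a change of basis this group contains the block-diagonal copy of $\diag(\SL_k(D),\SL_{n-k}(D))$ inside $\SL_n(D)\subseteq G$, where $k=\dim_D W$. When $n\ge 3$ one of the factors is $\SL_m(D)$ with $m\ge 2$, which by Lemma \ref{lemma_2.10} is neither (locally solvable)-by-finite nor solvable; for case (1) with $n=2$, $\mathrm{Stab}_G(W)$ instead contains the torus $\{\diag(d,d^{-1}):d\in D^{*}\}\cong D^{*}$, not (locally solvable)-by-finite by Lemma \ref{lemma_2.4}. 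Either way the hypothesis on $M$ is contradicted, so $M$ is irreducible. If $M$ were imprimitive, the same maximality trick (using primitivity of $G$) shows $M$ is the stabiliser in $G$ of the block system $\{V_1,\dots,V_m\}$ and hence contains $\prod_i\SL(V_i)$; unless all $\dim_D V_i=1$ this again contains some $\SL_m(D)$, $m\ge 2$, while in the monomial case the stabiliser in $\SL_n(D)$ surjects onto $\mathrm{Sym}(n)$ (non-solvable for $n\ge 5$) and contains a diagonal copy of $D^{*}$ (not (locally solvable)-by-finite for non-commutative $D$). Both contradict the hypothesis, so $M$ is primitive.

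\emph{Parts (iii), (ii), (iv).} By (i) and repeated use of Clifford's theorem along a subnormal chain from $N$ to $M$, $V$ is a completely reducible $D$-$N$-bimodule. Assume $N$ is not abelian; I would first treat $N\unlhd M$. The homogeneous components of $V|_N$ are permuted by $M$, so primitivity forces $V|_N\cong U^{a}$ to be homogeneous. If $a\ge 2$ then $C_{\M_n(D)}(N)\cong\M_a(\Delta_U)$ contains a copy of $\SL_a(F)$, which, lying in $\SL_n(D)\subseteq G$ and centralising $N$, lies in $C_G(N)$; Lemma \ref{lemma_2.9} applied to the $M$-invariant algebra $C_{\M_n(D)}(N)\ne\M_n(D)$ then forces $C_G(N)\subseteq M$ (the alternative $C_G(N)\subseteq F$ being excluded by $\SL_a(F)$), so $\SL_a(F)\subseteq M$, impossible for $a\ge2$. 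Hence $a=1$, $N$ is irreducible on $V$, and $\Delta_N:=C_{\M_n(D)}(N)$ is a division ring. Lemma \ref{lemma_2.9} applied to $\Delta_N$ yields either $G=M\Delta_N^{*}$ — which makes $N\unlhd G$, whence $[\SL_n(D),N]$ is a central set, the maps $s\mapsto[s,n]$ are homomorphisms of the perfect group $\SL_n(D)$ into $F^{*}$ and so trivial, forcing $N\subseteq F^{*}$, absurd — or $C_G(N)\subseteq M$, in which case $\Delta_N'\subseteq C_G(N)\subseteq M$ is a (locally solvable)-by-finite normal subgroup of $\Delta_N^{*}$, hence central by Lemma \ref{lemma_2.4}, so $\Delta_N^{*}$ is metabelian and $\Delta_N$ is a field. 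The remaining step, $\Delta_N=F$ (equivalently, $N$ absolutely irreducible), is the crux: one must again exploit maximality, applying Lemma \ref{lemma_2.9} to the $M$-invariant subalgebras $C_{\M_n(D)}(\Delta_N)$ and $F[N]$ and eliminating the surviving alternatives, in the spirit of the analysis behind Lemma \ref{lemma_2.6}. Granting $\Delta_N=F$, the usual description of the quotient ring of an irreducible linear group (\cite{shirvani-wehrfritz}) makes $F[N]$ a prime Goldie ring with classical right quotient ring $C_{\M_n(D)}(C_{\M_n(D)}(N))=\M_n(D)$; the subnormal case then reduces to the normal one by passing to $N^{M}\unlhd M$ and inducting on $n=\dim_D V$. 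Finally, (ii) and (iv) are formal: taking $N=M$ in (iii), the centralizer in $\M_n(D)$ of $F[M]$ equals that of its quotient ring, so $C_{\M_n(D)}(M)=C_{\M_n(D)}(\M_n(D))=F$, a field; for (iv), if $N$ is non-abelian the same gives $C_{\M_n(D)}(N)=F$, while if $N$ is abelian then $V|_N$ is homogeneous (by the Clifford/primitivity argument through the normal-closure reduction), so $C_{\M_n(D)}(N)\cong\M_a(\Delta)$ — in either case a simple artinian ring.

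\emph{Main obstacle.} The genuinely hard points are the identity $\Delta_N=F$ for non-abelian $N\unlhd M$ and the reduction of the subnormal case to the normal one; both require an iterative use of the maximality of $M$ through Lemma \ref{lemma_2.9}, in contrast to the soft module-theoretic inputs (the parabolic argument for (i) and Clifford's theorem) used everywhere else.
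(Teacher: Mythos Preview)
Your treatment of (i) is essentially the paper's parabolic/monomial argument. The real gap is in (iii), exactly where you flag it: you never establish $\Delta_N=F$, and your route ``$N$ irreducible $\Rightarrow$ $\Delta_N$ a field $\Rightarrow$ $\Delta_N=F$'' is a detour the paper avoids entirely. The paper instead records that $R=F[N]$ is Goldie (\cite[Corollary~24]{wehrfritz_89}) and prime, so its classical right quotient ring $Q$ is simple artinian and embeds in $\M_n(D)$ (\cite[5.7.8]{shirvani-wehrfritz}). Then it applies Lemma~\ref{lemma_2.9} \emph{to $Q$ itself}, not to the centralizer $C_{\M_n(D)}(N)$: either $Q=\M_n(D)$, which is the desired conclusion, or $Q^*\cap G\subseteq M$. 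In the latter alternative $Q^*\cap G$ is a (locally solvable)-by-finite normal subgroup of $Q^*\cong\GL_m(E)$, so Lemmas~\ref{lemma_2.4} and~\ref{lemma_2.8} force $Q^*\cap G\subseteq Z(Q)$, whence $N$ is abelian. There is no Clifford decomposition, no analysis of $\Delta_N$, and no separate subnormal-to-normal reduction. In fact your missing step $\Delta_N=F$ is exactly what one obtains by applying Lemma~\ref{lemma_2.9} to $C_{\M_n(D)}(\Delta_N)$, which for irreducible $N$ coincides with $Q$; you were one application of the lemma away, but you kept aiming it at the centralizer side rather than at the quotient ring of $F[N]$.

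For (ii) the paper gives a short direct argument independent of (iii): every $x\in C_{\M_n(D)}(M)'$ lies in $\SL_n(D)\subseteq G$, and maximality of $M$ in $G$ forces $x\in F^*Z(M)$, so the derived subgroup of $C_{\M_n(D)}(M)^*$ is abelian and $C_{\M_n(D)}(M)$ is a field. For (iv) the paper simply refers to the argument of \cite[Proposition~3.3]{dorbidi2011}; your claim that $V|_N$ is homogeneous for an arbitrary \emph{subnormal} abelian $N$ (``through the normal-closure reduction'') is not justified as written.
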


\begin{proof}
	We begin by proving that  $M$ is irreducible. Otherwise, it follows from \cite[1.1.1]{shirvani-wehrfritz} that there exist a matrix $P\in\GL_n(D)$ and some integer $0<m<n$ such that $PM{P^{-1}}\subseteq H$, where $$H=\left({\begin{array}{*{20}{c}}{\GL_m(D)}&*\\0&{\GL_{n - m}(D)}\end{array}}\right) \cap G.$$ The normality of $G$ and the maximality of $M$ imply that $PMP^{-1}$ is a maximal subgroup of $G$ contained in $H$, which yields either $H=G$ or $H=PMP^{-1}$. We observe that in the former event, we would have  $\SL_n(D)\subseteq H$, which  is impossible since $\I_n+\E_{n1}$ belongs to $\SL_n(D)$ but it is not an element of $H$ (here $\E_{n1}$ is the matrix with $1$ in the position $(n,1)$ and $0$ everywhere else). Therefore, we may assume that $H=PMP^{-1}$, which is conjugate to $M$.  It follows that $H$ is (locally solvable)-by-finite. Consequently, the group $$\left({\begin{array}{*{20}{c}}{\SL_m(D)}& 0\\0&{\SL_{n-m}(D)}\end{array}}\right) \subseteq H,$$ which clearly contains a copy of $\SL_m(D)$ and of $\SL_{n-m}(D)$, is  (locally solvable)-by-(finite) too. By virtue of Lemma \ref{lemma_2.10}, we must have  $m=n-m=1$ and so $n=2$ and $D=F$, a contradiction. As a result, the group $M$ is irreducible, proving our claim.
	
	As $M$ is irreducible, it follows from \cite[Lemma 8]{akbari} that $F_1=C_{\M_n(D)}(M)$ is a division ring. Take an element $x\in F_1'\subseteq \SL_n(D)$, by \cite[Theorem 11]{mah98}, we have  $x\in G$. The maximality of $M$ in $G$ again says that either $\left\langle M,x\right\rangle \cap G =M$ or $G\subseteq \left\langle M,x\right\rangle$. If the first case occurs, then $\left\langle M,x\right\rangle=M$, or $x\in M$. It follows that $x\in Z(M)$. In the latter case, we have $F[G]=F[\left\langle M,x\right\rangle]=\M_n(D)$ by the Cartan-Brauer-Hua Theorem for the matrix ring and so $x\in F$. Thus, in either case we have $x\in F^*Z(M)$, from which we conclude that $F_1'$ is abelian. A consequence of this fact is $F_1^*$ is solvable, and so $F_1$ is indeed a field. The assertion (ii) is now established.
	
	For a proof of (i),  we assume by contradiction that $M$ is imprimitive. Then, the proof of \cite[Lemma 2.5]{hai-khanh} says that $M$ contains a copy of $\SL_r(D)\wr S_k$, the wreath product of $\SL_r(D)$ and the symmetric group $S_k$ for some $r > 1$ and $n=rk$. Since $M$ is (locally solvable)-by-finite, so is $\SL_r(D)$. By Lemma \ref{lemma_2.10}, we have $r=1$, $k=n$ and $D$ is a field, which contradicts to the assumption of (1). Therefore $M$ is primitive under assumption (1). If $n\geq 5$ and $M$ is solvable, then the fact $n=k$ implies that $S_n$ is solvable, a contradiction. This contradiction indicates that $M$ is primitive, and (i) is proved.
	
	To prove (iii), we note that $R$ is in fact Goldie (\cite[Corollary 24]{wehrfritz_89}) and prime. Therefore, the classical right quotient ring, say $Q$, of $R$ must be simple artinian (\cite[Theorem 6.18]{goodearl-warfield}), which is embedded in $\M_n(D) $ by \cite[5.7.8]{shirvani-wehrfritz}.
	Therefore, the exist a division $F_1$-algebra $E$ and an integer $m\geq 1$ such that $Q\cong \M_m(E)$. Since $M$ normalizes $R$, it also normalizes $Q$. It follows from Lemma \ref{lemma_2.9} that either $Q^*\cap G\unlhd  M$ or $Q=\M_n(D)$. The first case implies that $Q^*\cap G$ is a subnormal subgroup of $Q^*$ contained in $M$. As a result, the subgroups $N\subseteq Q^*\cap  G$ is contained  in $Z(Q)$ by  Lemma \ref{lemma_2.4} and Lemma \ref{lemma_2.8}. In other words, $N$ is abelian, and (iii) is proved.
	
	The confirmation of the final assertion (iv) follows from the same argument used in the proof of \cite[Proposition 3.3]{dorbidi2011}. 
\end{proof}

\begin{lemma}[{\cite[Theorem 2]{lanski_81}}]\label{lemma_2.12}
	Let $R$ be a prime ring with identity, $Z=Z(R)$ the center of $R$ containing at least five elements, and $\overline{U}$ the $Z$-subalgebra of $R$ generated by $R^*$. Assume $\overline{U}$ contains a nonzero ideal of $R$. If $R^*$ has a solvable normal subgroup which is not central, then $R$ is a domain.
\end{lemma}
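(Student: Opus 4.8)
The plan is to argue by contradiction: assuming $R$ is not a domain, I would show that $R^*$ can then have no non-central solvable normal subgroup. First, given such a subgroup $N\unlhd R^*$, note that every term $N^{(i)}$ of the derived series is fully invariant in $N$, hence normal in $R^*$; if $j$ is the least index with $N^{(j)}\subseteq Z$ and $A:=N^{(j-1)}$, then $A\unlhd R^*$ is non-central with $A'\subseteq Z$. So after replacing $N$ by $A$ I may assume $[x,y]\in Z$ for all $x,y\in N$ while $N\not\subseteq Z$; fix once and for all $a\in N\setminus Z$.

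The second stage is to extract a generalized polynomial identity for $R$. Since $R$ is prime and not a domain it contains $t\neq0$ with $t^2=0$; then $trt$ is square-zero for every $r\in R$, with $tRt\neq0$ by primeness. For a square-zero element $s$ and any $z\in Z$, the element $1+zs$ is a unit with inverse $1-zs$, so $(1+zs)a(1-zs)\in N$ and $(1+zs)a^{-1}(1-zs)\in N$; since $N'\subseteq Z$, the group commutator $\varphi_s(z)=\bigl[(1+zs)a(1-zs),\,a\bigr]$ lies in $Z$ for all $z$. Expanding, $z\mapsto\varphi_s(z)$ is a polynomial of degree $\le 4$ with coefficients in $R$ taking values in $Z$ at the $\ge 5$ points of $Z$; choosing five distinct $z_0,\dots,z_4\in Z$, a Vandermonde computation gives $d\,c_i\in Z$ for each coefficient $c_i$, where $d=\prod_{i<j}(z_j-z_i)\neq0$. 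A direct computation shows the linear coefficient is $c_1=a^{-2}[a,[a,s]]$ (additive commutators), so $d\,a^{-2}[a,[a,trt]]\in Z$ for every $r\in R$; commuting this further with an arbitrary element of $R$ yields a two-variable generalized polynomial identity for $R$, with coefficients assembled from $a$, $a^{-1}$ and $t$.

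The crux — and what I expect to be the main obstacle — is to show that this identity is \emph{nontrivial}, which is delicate precisely because $t$ is a zero divisor, so the coefficients need not be obviously independent over the extended centroid; here one must exploit $a\notin Z$ together with the primeness of $R$, and if the naive coefficient degenerates one passes to other coefficients or other unipotent units. Granting a nontrivial generalized polynomial identity, Martindale's theorem presents the Martindale quotient ring $Q$ of $R$ as a primitive ring with nonzero socle, dense in the ring of linear transformations of a right vector space $V$ over a division ring $\Delta$ finite-dimensional over the extended centroid — and it is here that the hypothesis ``$\overline U$ contains a nonzero ideal of $R$'' is used, to carry the identity from the units to a nonzero ideal of $R$ and thereby bring this structure theory to bear. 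If $R$ were not a domain then $\dim_\Delta V\ge 2$, so $R$ contains, inside the socle, a nonzero ideal carrying matrix units, whence $R^*$ contains an abundance of unipotent units; the commutator calculations underlying the simplicity (and non-commutativity) of $\SL_m(\Delta)$ modulo its center, valid since $|Z|\ge 5$, then force the non-central normal subgroup $N$ to contain a perfect non-central subgroup, contradicting its solvability. Therefore $\dim_\Delta V=1$, $Q=\Delta$ is a division ring, and since $R\hookrightarrow Q$ we conclude that $R$ is a domain.
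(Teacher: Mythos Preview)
The paper does not prove this lemma at all: it is quoted verbatim as \cite[Theorem~2]{lanski_81} and used as a black box in the proof of Theorem~\ref{theorem_2.14}. There is therefore no ``paper's own proof'' to compare your attempt against.

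That said, your sketch is broadly faithful to the strategy of Lanski's original argument: reduce to a normal subgroup $N$ with $N'\subseteq Z$, manufacture units $1+zs$ from a square-zero element $s$, exploit $|Z|\ge 5$ via a Vandermonde system to force individual coefficients (times the determinant) into $Z$, and then invoke GPI/Martindale structure theory. Your computation of the linear coefficient $c_1=a^{-2}[a,[a,s]]$ is correct.

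Where your write-up falls short of a proof is precisely where you yourself flag it. You say the nontriviality of the resulting generalized polynomial identity is ``the main obstacle'' and then proceed by ``granting'' it; but this is not a minor bookkeeping step --- it is the heart of the argument, and with $t$ a zero divisor the coefficients $t(\cdots)t$ can genuinely collapse, so one must work harder (e.g.\ pass to other coefficients, vary $s$, or argue in the Martindale quotient) to pin down a nontrivial identity. Likewise, your use of the hypothesis on $\overline{U}$ is gestured at rather than executed: Lanski needs it to ensure the identities obtained on $R^*$ propagate to a nonzero ideal, and this step has content. Finally, the endgame ``$R^*$ contains an abundance of unipotent units \ldots\ force $N$ to contain a perfect non-central subgroup'' is plausible heuristics but not an argument; one has to locate the matrix units carefully inside $R$ (not merely in the socle of $Q$) and show $N$ actually meets the relevant elementary subgroup nontrivially. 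As written, this is a credible outline of Lanski's proof rather than a proof.
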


\begin{lemma}\label{lemma_2.13}
	Let $A$, $B$, $C$ be groups such that $A\unlhd B\unlhd C$ and that $A$ is a solvable subgroup of finite index in $B$. Then $A$ is contained in a solvable subgroup of $B$ and such a subgroup is normal in $C$.
\end{lemma}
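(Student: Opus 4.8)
The plan is to locate the required subgroup intrinsically inside $B$, namely as the \emph{solvable radical} of $B$ (the largest solvable normal subgroup), and then to observe that a largest solvable normal subgroup is automatically characteristic in $B$ and hence, since $B\unlhd C$, normal in $C$.

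First I would pass to the finite quotient $\overline{B}=B/A$. Recall the elementary fact that if $S_1,S_2$ are solvable normal subgroups of a group $B$, then $S_1S_2$ is again a solvable normal subgroup: it is normal, and $S_1S_2/S_1\cong S_2/(S_1\cap S_2)$ is solvable, so $S_1S_2$ is solvable-by-solvable, hence solvable. Consequently the finite group $\overline{B}$ has a unique largest solvable normal subgroup $\overline{R}$ — its solvable radical — since the join of all solvable normal subgroups of $\overline{B}$ is solvable by the previous remark together with finiteness. Let $R$ be the preimage of $\overline{R}$ in $B$. Then $A\leq R$, $R\unlhd B$, and $R$ is solvable, being an extension of the solvable group $A$ by the solvable group $\overline{R}$.

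Next I would verify that $R$ is in fact the largest solvable normal subgroup of $B$, not merely of those containing $A$. Indeed, if $S\unlhd B$ is solvable, then $SA$ is a solvable normal subgroup of $B$ containing $A$, so $SA/A$ is a solvable normal subgroup of $\overline{B}$; therefore $SA/A\subseteq\overline{R}$, whence $S\subseteq SA\subseteq R$. Thus $R$ contains every solvable normal subgroup of $B$ and is itself one, so it does not depend on the choice of $A$.

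Finally, since $R$ is the unique largest solvable normal subgroup of $B$, every automorphism of $B$ carries $R$ onto a solvable normal subgroup of $B$, hence into $R$, and, applying the same to the inverse automorphism, onto $R$; so $R$ is characteristic in $B$. As $B\unlhd C$, conjugation by any element of $C$ induces an automorphism of $B$, so $cRc^{-1}=R$ for all $c\in C$, i.e.\ $R\unlhd C$. Taking the solvable subgroup to be $R$ completes the proof. There is no serious obstacle here; the only point demanding care is that $R$ must be identified intrinsically (as the solvable radical of $B$) rather than merely as ``the preimage in $B$ of the radical of $B/A$'', because $A$ is only assumed normal, not characteristic, in $B$ — and the characterization of $R$ as the unique largest solvable normal subgroup of $B$ is exactly what repairs this.
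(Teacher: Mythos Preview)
Your proof is correct and takes a genuinely different route from the paper's. The paper constructs the desired subgroup as the \emph{normal closure} $H=\langle x^{-1}Ax : x\in C\rangle$ of $A$ in $C$; since $A\le B\unlhd C$, each conjugate $x^{-1}Ax$ is a solvable normal subgroup of $B$, and because $[B:A]<\infty$ one sees (via a transversal of $A$ in $H$) that $H$ is generated by finitely many such conjugates, hence solvable. You instead identify the subgroup intrinsically as the solvable radical $R$ of $B$, obtained as the preimage of the radical of the finite group $B/A$, and use that $R$ is characteristic in $B$, hence normal in $C$. Your argument is cleaner and more conceptual: it invokes only the standard facts that products of solvable normal subgroups are solvable and that characteristic subgroups of normal subgroups are normal, and it yields the \emph{largest} subgroup with the required properties. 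The paper's argument is more hands-on and yields the \emph{smallest} normal-in-$C$ solvable subgroup containing $A$; in particular the paper's $H$ is always contained in your $R$. Either construction suffices for the applications later in the paper.
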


\begin{proof}
	For any $x\in C$, it is clear that $x^{-1}Ax$ is a solvable normal subgroup of $B$. If we set $H=\left\langle x^{-1}Ax\right\rangle $ where $x$ runs over $C$, then $A\unlhd H \subseteq B$ and $H \unlhd C$. Since $[B:A]$ is finite, we conclude that $[H:A]$ is finite. Let $\{h_1, \dots, h_n\}$ be a transversal of $A$ in $H$. Since $H=\left\langle x^{-1}Ax\right\rangle $, for each $1\leq i\leq n$, the element $h_i$ may be expressed in the form
	$$h_i=x_{i_1}^{-1}a_{i_1}x_{i_1}\cdots x_{i_{k_i}}^{-1}a_{i_{k_i}}x_{i_{k_i}},$$ where $x_{i_j}\in C$ and $a_{i_j}\in A$. It is clear that $$H=\left\langle x_{i_j}^{-1}Ax_{i_j}: 1\leq i\leq n, 1\leq j\leq k_i\right\rangle.  $$ Since there are only finitely many $x_{i_j}$'s, it follows that $H$ is a solvable normal subgroup of $C$ containing $A$.
\end{proof}

\begin{theorem}\label{theorem_2.14}
	Let $D$ be a non-commutative division ring with center $F$ which contains at least five elements, and $G$ a normal subgroup of $\GL_n(D)$ with $n\geq2$. If $M$ is a solvable-by-finite maximal subgroup of $G$ such that $F[M]\ne\M_n(D)$, then $M$ is abelian.
\end{theorem}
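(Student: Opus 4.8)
The plan is to argue by contradiction: assuming $M$ is non-abelian, I will force $R:=F[M]$ to be a domain, which is absurd because its classical right quotient ring turns out to be $\M_n(D)$ with $n\ge 2$.

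\emph{Reductions.} If $M\subseteq F$ then $M$ is abelian, so we may assume $M\not\subseteq F$; then $M\subseteq R^*\cap G$ and $R^*\cap G\not\subseteq F$, so applying Lemma~\ref{lemma_2.9} to the $F$-subalgebra $R=F[M]$ (which $M$ normalizes) together with the hypothesis $R\ne\M_n(D)$ yields $R^*\cap G=M$. In particular $M=R^*\cap G\unlhd R^*$, since $G\unlhd\GL_n(D)\supseteq R^*$. As $D$ is non-commutative it is infinite (Wedderburn), so Lemma~\ref{lemma_2.11} is available under hypothesis~(1); taking $N=M$ there (a group is subnormal in itself) and using that $M$ is non-abelian, part~(iii) gives that $R=F[M]$ is prime and that its classical right quotient ring is $\M_n(D)$.

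\emph{Producing a solvable non-central normal subgroup of $R^*$.} Pick a solvable normal subgroup $P$ of finite index in $M$. If $P\subseteq Z(M)$, then $[M:Z(M)]<\infty$, so $M'$ is finite by Schur's theorem; thus $M'$ is a finite subnormal subgroup of $M$ with $F[M']\subseteq F[M]\subsetneq\M_n(D)$, and since $F[M']$ is finite-dimensional over $F$, if it were prime it would be simple Artinian, hence equal to its own classical right quotient ring, which by Lemma~\ref{lemma_2.11}(iii) would be $\M_n(D)$ --- impossible. So $F[M']$ is not prime, whence Lemma~\ref{lemma_2.11}(iii) forces $M'$ abelian, $M$ is metabelian, and we may instead take $P=M$. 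In either case we have a solvable $P\unlhd M$ with $P\not\subseteq Z(M)$; since $R$ is generated over $F$ by $M$, this is equivalent to $P\not\subseteq Z(R)$. Now Lemma~\ref{lemma_2.13}, applied to $P\unlhd M\unlhd R^*$, produces a solvable subgroup $H$ with $P\subseteq H\subseteq M$ and $H\unlhd R^*$; then $H\not\subseteq Z(R)$, so $H$ is a solvable, non-central, normal subgroup of $R^*$.

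\emph{Conclusion via Lanski's theorem.} Since $Z(R)\supseteq F$ it has at least five elements; the $Z(R)$-subalgebra of $R$ generated by $R^*$ contains $F[M]=R$, hence equals $R$ and so contains the nonzero ideal $R$ of the prime ring $R$. By Lemma~\ref{lemma_2.12}, $R$ is a domain. But the classical right quotient ring of $R$ is $\M_n(D)$, and since every nonzero element of a domain is regular --- hence a unit in that quotient ring --- this forces $\M_n(D)$ to be a division ring, contradicting $n\ge2$. Therefore $M$ is abelian. I expect the delicate point to be precisely the middle step: ruling out that $M$ is merely ``central-by-finite'' (via Schur's theorem and the finite-dimensional instance of Lemma~\ref{lemma_2.11}(iii)) and then transferring the normality of the solvable subgroup from $M$ up to $R^*$ through Lemma~\ref{lemma_2.13}, so that Lanski's theorem can be applied.
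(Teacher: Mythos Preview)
Your argument is correct and, in fact, more economical than the paper's. The paper applies Lanski's theorem (Lemma~\ref{lemma_2.12}) to $R=F[N]$, where $N$ is the solvable normal subgroup of finite index in $M$; this yields only that $N$ is abelian, so a second stage is then needed---using that $FM$ is a PI-ring, Rowen's result on prime PI-rings with field centre, Kaplansky's theorem, and a case split on whether $C_{\M_n(D)}(M)\subseteq M$---to upgrade ``$M$ abelian-by-finite'' to ``$M$ abelian.'' By contrast you run Lanski on $R=F[M]$ itself: once $R^*\cap G=M\unlhd R^*$ (via Lemma~\ref{lemma_2.9}) and $Q(R)=\M_n(D)$ (via Lemma~\ref{lemma_2.11}(iii) with $N=M$), producing a solvable normal non-central subgroup of $R^*$ via Lemma~\ref{lemma_2.13} gives the contradiction in a single stroke. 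This is a genuine simplification.

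One cosmetic point: in the sub-case $P\subseteq Z(M)$ your phrasing ``So $F[M']$ is not prime'' is not quite the logic you want. The clean statement is: by Lemma~\ref{lemma_2.11}(iii), either $M'$ is abelian or the classical right quotient ring of $F[M']$ equals $\M_n(D)$; since $F[M']$ is finite-dimensional over $F$, it is Artinian and hence equals its own quotient ring, and $F[M']\subseteq F[M]\subsetneq\M_n(D)$ rules out the second alternative. This gives $M'$ abelian directly (whether or not $F[M']$ happens to be prime), so $M$ is metabelian and you may take $P=M$. The rest of your argument is unaffected.
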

\begin{proof}
	Let $N$ be a solvable normal subgroup of finite index of $M$. We shall show first that $N$ is abelian.  For this purpose, we set $R=F[N]$ and $Q$ to be the classical right quotient ring of $R$. Then, Lemma \ref{lemma_2.11}(iii) ensures that $R$ is prime and Goldie, and that either $N$ is abelian or $Q=\M_n(D)$. If the first case occurs, then we are done. Now, we shall show that the latter case cannot happen by contradiction. So, assume that $Q=\M_n(D)$ and that $N$ is non-abelian. In view of Lemma \ref{lemma_2.9}, we have $G\cap R^*\subseteq M$, from which it follows  that  $G\cap R^*$ is a normal subgroup of $R^*$ contained in $M$. Now, we have $N \unlhd G\cap R^* \unlhd R^*$ and $[G\cap R^*:N]<\infty$. With reference to Lemma \ref{lemma_2.13}, we conclude that $N$ is contained in a solvable normal subgroup, say $H$, of $R^*$. Since $Z(R)$ contains $F$, it has at least five elements. Additionally, the fact $N\subseteq H$ assures us that $H$ is not contained in $Z(R)$. Therefore, we may apply Lemma \ref{lemma_2.12} to obtain that $R$ is a domain. Now, $R$ is both a domain and Goldie, it is actually an Ore domain, and so $Q=\M_n(D)$ is a division ring. But this leads to a contradiction that $n>1$. Therefore, we have $N$ is abelian, and so $M$ is abelian-by-finite. 
	
	Next, we assert that $M$ is indeed abelian. Again, Lemma \ref{lemma_2.11}(iii) shows that $S=F[M]$ is a prime ring. Because $M$ is abelian-by-finite, we may apply \cite[Lemma 11, p.176]{passman_77} to conclude that the group ring $FM$ is a PI-ring. Thus, as a hommomorphic image of $FM$, the ring $S$ is also a PI-ring. Since $M$ normalizes $S$, by Lemma \ref{lemma_2.9}, we deduce that $S^*\cap G\subseteq F$ or $S^*\cap G\subseteq M$. The first case yields that $M$ is abelian, and we are done. The latter illustrates that $M= S^*\cap G$. In view of Lemma \ref{lemma_2.11}(i), we obtain that $F_1=C_{\M_n(D)}(M)$ is a field. There are two possible cases:
	
	\bigskip 
	
	\textit{Case 1.} $F_1\subseteq M$. In this case, the field $F_1$ is the center of the prime ring $S$. In view of \cite[Corollary 1.6.28]{rowen}, we conclude that $S$ is a simple ring. Now $S$ is both simple and PI-ring, so it is a simple artinian ring by Kaplansky's theorem. Therefore, $S$ is coincided with its classical right quotient $Q$ and thus we may apply Lemma \ref{lemma_2.11}(iii) to get that $M$ is abelian.
	
	\bigskip
	
	\textit{Case 2.} $F_1\not\subseteq M$. Set $M_1=F_1^*M$ and $N_1=F_1^*N$. If $M_1=N_1$, then $M_1$ contains $\SL_n(D)$, which is impossible since $\SL_n(D)$ cannot be abelian-by-finite. If $M_1\ne N_1$, then $M_1$ is a maximal subgroup of $N_1$. By the same way, we conclude that $M_1$ is abelian, and so is $M$. This completes the proof of the theorem.
\end{proof}

\begin{lemma}\label{lemm_2.15}
	Let $D$ be a division ring with center $F$, and $M$ a subgroup of $\GL_n(D)$ with $n\geq 1$. If $M/M\cap F^*$ is a locally finite group, then $F[M]$ is a locally finite dimensional vector space over $F$.
\end{lemma}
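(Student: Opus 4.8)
The plan is to prove the (equivalent) statement that every finitely generated $F$-subalgebra of $F[M]$ is finite-dimensional over $F$. First I would reduce to finitely generated subgroups of $M$. Let $S$ be a finite subset of $F[M]$. Since $F[M]$ is the subring of $\M_n(D)$ generated by $F\cup M$ and $F=F\I_n$ is central in $\M_n(D)$, every element of $F[M]$ is an $F$-linear combination of finitely many products of elements of $M$; hence there are finitely many $m_1,\dots,m_k\in M$ with $S\subseteq F[m_1,\dots,m_k]$. Setting $H=\langle m_1,\dots,m_k\rangle\leq M$, we get $F[S]\subseteq F[H]$, so it is enough to show $\dim_F F[H]<\infty$.

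Next I would use the hypothesis to bound the index of $F^*$ in $H$. Identifying $F^*$ with $F^*\I_n$, we have $H\cap F^*=H\cap(M\cap F^*)$, so the natural map $H/(H\cap F^*)\to M/(M\cap F^*)$ is an injective group homomorphism. Its image is generated by the images of $m_1,\dots,m_k$, hence is finitely generated; since $M/(M\cap F^*)$ is locally finite, the group $H/(H\cap F^*)$ is finite, say $[H:H\cap F^*]=t$. Fix a transversal $h_1,\dots,h_t$ of $H\cap F^*$ in $H$, so that each $h\in H$ can be written $h=\lambda h_i$ for some $1\leq i\leq t$ and some $\lambda\in H\cap F^*\subseteq F$.

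Finally I would assemble these facts. Put $V=\sum_{i=1}^{t}Fh_i\subseteq\M_n(D)$. By the previous paragraph $H\subseteq V$, and since $F$ is central and $hh'\in H\subseteq V$ for all $h,h'\in H$, the $F$-subspace $V$ is closed under multiplication; as it also contains $F=F\I_n$, it is an $F$-subalgebra of $\M_n(D)$ containing $H$, whence $F[H]\subseteq V$, and therefore $F[H]=V$. Consequently $\dim_F F[S]\leq\dim_F F[H]=\dim_F V\leq t<\infty$, and since $S$ was an arbitrary finite subset of $F[M]$, the ring $F[M]$ is a locally finite-dimensional vector space over $F$. I do not expect a genuine obstacle here: the argument is elementary, and the only points that require a little care are the reduction to a finitely generated $H$ (which relies on $F$ being central) and the observation that the $F$-span of the group $H$ is already closed under multiplication, so that it coincides with $F[H]$.
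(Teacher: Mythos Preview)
Your proof is correct and follows essentially the same approach as the paper's: both reduce to a finitely generated subgroup $H\leq M$, use local finiteness of $M/(M\cap F^*)$ to get $[H:H\cap F^*]<\infty$ (the paper phrases this via $HF^*/F^*$), and then observe that the $F$-span of a transversal is already a finite-dimensional subring containing $S$. The only cosmetic difference is that the paper works with a transversal of $F^*$ in $HF^*$ rather than of $H\cap F^*$ in $H$, which amounts to the same thing.
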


\begin{Proof}  
	Pick a finite subset $S=\{x_1,x_2,\dots,x_k\}$ of $ F[M]$. Then, for each $1\leq i\leq k$, we can find elements $f_{i_1}, f_{i_2},\dots,f_{i_s}$ in $F$ and $m_{i_1}, m_{i_2},\dots,m_{i_s}$ in $M$ such that
	$$x_i=f_{i_1}m_{i_1}+f_{i_2}m_{i_2}+\cdots+f_{i_s}m_{i_s}.$$
	Let $G$ be the subgroup of $M$ generated by the $m_{i_j}$'s.
	We know that $M/M\cap F^*\cong MF^*/F^*$ and so $MF^*/F^*$ is locally finite, from which it may be concluded that $GF^*/F^*$ is finite.  If $\{y_1,y_2,\dots,y_t\}$ is a transversal of $F^*$ in $GF^*$, then 
	$$R=Fy_1+Fy_2+\cdots+Fy_t$$ 
	forms a ring containing $S$. The displayed relation also means $R$ is finite dimensional over $F$. Since $S$ is chosen to be arbitrary, our result certainly follows. 
\end{Proof}

\begin{lemma}\label{lemma_2.16}
	Let $R$ be a ring, and $G$ a subgroup of $R^*$. Assume that $F$ is a central subfield of $R$ and that $A$ is a maximal abelian subgroup of $G$ such that $K=F[A]$ is normalized by $G$. Then $F[G]$ is a crossed product of $K$ by $G/A$. In addition, if $K$ is a field, then it is a maximal subfield of $R$.
\end{lemma}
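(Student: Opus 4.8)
The plan is to verify, one at a time, the conditions in the definition of a crossed product recalled in the introduction, with $S=K$ and $N=A$; only one of these conditions carries genuine content.

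First I would clear away the bookkeeping. Since $A$ is abelian and $F\subseteq Z(R)$, the ring $K=F[A]$ is commutative and contains $A$, so $G\cap K$ is an abelian subgroup of $G$ containing $A$, and the maximality of $A$ forces $G\cap K=A$ (the ``$N$'' of the definition). As $G$ normalizes $K$, for $g\in G$ we get $gAg^{-1}\subseteq gKg^{-1}\cap G=K\cap G=A$, so $A\unlhd G$. Because $K$ is generated as a ring by $A$ together with the central field $F$, an element of $G$ centralizes $K$ exactly when it centralizes $A$; combined with the elementary fact that a maximal abelian subgroup is self-centralizing (if $x\in C_G(A)$ then $A\langle x\rangle$ is an abelian subgroup of $G$ containing $A$, hence equal to $A$), this gives $C_G(K)=C_G(A)=A$, so conjugation induces an embedding $G/A\hookrightarrow\Aut(K)$. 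Finally $F[G]=K[G]$ since each side contains $F\cup G$, and because $K$ is $G$-invariant the additive span $\sum_{g\in G}Kg$ is already closed under multiplication, so $F[G]=\sum_{g\in G}Kg$.

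The substantive step is to show $F[G]=\bigoplus_{t\in T}tK$ for a transversal $T$ of $A$ in $G$ (with $1\in T$). From $G=\bigsqcup_{t\in T}tA$ and the observation that $Kta=Kt$ for $a\in A$ (because $ta=\sigma_t(a)t$ with $\sigma_t(a)\in A$ a unit of $K$, where $\sigma_t(\cdot)=t(\cdot)t^{-1}$), one gets $\sum_{g\in G}Kg=\sum_{t\in T}Kt=\sum_{t\in T}tK$; what remains is directness. I would take a relation $\sum_{i=1}^{m}k_it_i=0$ with all $k_i\in K\setminus\{0\}$ and $t_1,\dots,t_m$ in distinct cosets, of minimal length $m$; the $t_i$ being units forces $m\ge2$. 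Right-multiplying by an arbitrary $a\in A$ turns the relation into $\sum_i k_i\sigma_{t_i}(a)t_i=0$, and subtracting $\sigma_{t_1}(a)$ times the original relation annihilates the $i=1$ term, leaving a relation of length $<m$; by minimality it is trivial, so $k_i\big(\sigma_{t_i}(a)-\sigma_{t_1}(a)\big)=0$ for all $i$ and all $a\in A$. From $k_i\ne0$ one deduces that $\sigma_{t_1^{-1}t_i}$ fixes $A$ pointwise, hence $t_1^{-1}t_i\in C_G(A)=A$, contradicting the fact that $t_2,\dots,t_m$ lie outside $t_1A$. This is precisely Dedekind's independence of automorphisms, and the one place care is needed — and the main obstacle to a fully general statement — is the implication ``$k_i\ne0$ and $k_i(\sigma_{t_i}(a)-\sigma_{t_1}(a))=0$ imply $\sigma_{t_i}(a)=\sigma_{t_1}(a)$'', which is immediate once $K$ has no zero divisors, in particular under the additional hypothesis below and in the intended applications, where $K$ is a field.

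For the final assertion, assume $K$ is a field. Then the $\sigma_t$ ($t\in T$) are distinct field automorphisms, the decomposition $F[G]=\bigoplus_{t\in T}Kt$ is an honest grading, and for $x=\sum_t k_tt$ and $z\in K$ a comparison of $t$-components in $xz=zx$ shows that $x$ centralizes $K$ iff $k_t\sigma_t(z)=k_tz$ for all $z$, iff $k_t\ne0$ only for those $t$ with $\sigma_t=\mathrm{id}$, i.e.\ only for $t\in A$ (so $t=1$). Hence $C_{F[G]}(K)=K$, whence $K$ is a maximal subfield of $F[G]$ — and of $R$ in the case $F[G]=R$ relevant to the applications.
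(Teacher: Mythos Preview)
Your proof is correct and follows essentially the same route as the paper's: a Dedekind-style minimal-relation argument for the $K$-linear independence of a transversal of $A$ in $G$, followed by a centralizer computation inside $F[G]$ for the maximal-subfield claim. You are in fact more explicit than the paper about the two hidden hypotheses --- that $K$ have no zero divisors for the independence step, and that $R=F[G]$ for the second assertion --- both of which the paper silently assumes and both of which hold in every application made of the lemma.
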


\begin{Proof} 
	Since $K$ is normalized by $G$, it follows that $F[G]=\sum\nolimits_{g \in T}{Kg}$ for every transversal $T$ of $A$ in $G$. Thus, to establish that $F[G]$ is a crossed product of $K$ by $G/A$, it suffices to prove that every finite subset $\{g_1,g_2,\dots,g_n\}$ of $T$ is  linearly independent over $K$. For a purpose of contradiction, we assume that there exists such a non-trivial relation 
	$$k_1g_1+k_2g_2+\cdots+k_ng_n=0.$$
	Clearly, we can suppose that all the $k_i$'s are non-zero and that $n$ is minimal. The case where $n=1$ is obviously trival and so we suppose that $n>1$. As the cosets $Ag_1$ and $Ag_2$ are disjoint, we know that $g_1^{-1}g_2\not\in A=C_G(A)$. Therefore, there exists an element $x\in A$ for which $g_1^{-1}g_2x\ne xg_1^{-1}g_2$. For each $1\leq i\leq n$, if we set $x_i=g_ixg_i^{-1}$, then $x_1\ne x_2$. Since $G$ normalizes $K$, it follows $x_i\in K$ for all $1\leq i\leq n$. Now, we have 
	$$(k_1g_1+\cdots+k_ng_n)x-x_1(k_1g_1+\cdots+k_ng_n)= 0.$$
	By definition of the $x_i$'s, we deduce that $x_ig_i=g_ix$, and so $x$, $x_i\in K$ for all $i$. By the fact that $K=F[A]$ is commutative, the last equality reveals
	$$\left( {{x_2} - {x_1}} \right){k_2}{g_2} +  \cdots  + \left( {{x_n} - {x_1}} \right){k_n}{g_n} = 0,$$
	which is a non-trivial relation (since $x_1\ne x_2$) with less than $n$ summands, contrasting to the minimality of $n$. As a result, we obtain the desired fact that $T$ is linearly independent over $K$.	
	
	Regarding the last assertion of our lemma,  we assume that $R=F[G]$ and that $K$ is a field. If we set $L=C_R(K)$, then every element $y\in L$ may be written in the form
	$$y=l_1m_1+l_2m_2+\cdots+l_tm_t,$$ 
	where $l_1,l_2,\dots,l_t\in K$ and $m_1,m_2,\dots,m_t\in T$. Take an arbitrary element  $a\in A$, by the normality of $A$ in $M$, there exist $a_i\in A$ such that $m_ia=a_im_i$ for all $1\leq i\leq t$. Since $ya=ay$, it follows that
	$$ (l_1a_1-l_1a)m_1+(l_2a_2-l_2a)m_2+\cdots+(l_ta_t-l_ta)m_t=0.$$ 
	As $\{m_1,m_2,\dots,m_t\}$ is linearly independent over $K$, the outcome is that $a=a_1=\cdots=a_t$. Consequently, $m_ia=am_i$ for all $a\in A$; thus, $m_i\in C_M(A)=A$ for all $1\leq i\leq t$. The consequence of this fact is that $y\in K$, yielding $L=K$. This implies that $K$ is a maximal subfield of $R$, and our proof is now completed.
\end{Proof}

\begin{lemma}[{\cite[3.2]{wehrfritz_91}}]\label{lemma_2.17}
	Let $R$ be a ring, $J$ a subring of $R$, and $H\leq K$ subgroups of the group of units of $R$ normalizing $J$ such that $R$ is the ring of right quotients of $J[H]\leq R$ and $J[K]$ is a crossed product of $J[B]$ by $K/B$ for some normal subgroup $B$ of $K$. Then $K=HB$.
\end{lemma}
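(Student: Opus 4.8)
This is \cite[3.2]{wehrfritz_91}, and the plan is to reconstruct that argument by exploiting the crossed-product grading. Fix a transversal $T$ of $B$ in $K$ with $1\in T$, so that by hypothesis $J[K]=\bigoplus_{t\in T}J[B]t$; this direct sum lets me speak of the \emph{support} $\supp(x)\subseteq T$ of any $x\in J[K]$, namely the set of $t$ at which the $J[B]$-coefficient of $x$ is nonzero. Since $B$ is normal in $K$ and $H\leq K$, the set $HB$ is a subgroup of $K$, and $T_0:=T\cap HB$ is a transversal of $B$ in $HB$; as the summands $J[B]t$ with $t\in T_0$ are among the independent summands of $J[K]$, the sub-sum $R_0:=\bigoplus_{t\in T_0}J[B]t$ is a subring of $J[K]$ (in fact $R_0=J[HB]$). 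The first step is to observe $J[H]\subseteq R_0$: because $H$ normalizes $J$ one has $J[H]=\sum_{h\in H}Jh$, and writing each $h\in H$ as $h=bt$ with $b\in B$ and $t\in T_0$ the representative of $Bh$ gives $Jh=Jbt\subseteq J[B]t\subseteq R_0$.

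Next, take an arbitrary $k\in K$; the aim is to show $k\in HB$. Since $R$ is the right quotient ring of $J[H]$, write $k=as^{-1}$ with $a,s\in J[H]$ and $s$ regular, so that $s$ is a unit of $R$ and $ks=a$ with $a,s\in J[H]\subseteq R_0$. Expand $s=\sum_{t\in T_0}s_tt$ with $s_t\in J[B]$; as $s\neq 0$, choose $t_1\in T_0$ with $s_{t_1}\neq 0$. Conjugation by $k$ carries $J[B]$ onto itself — here one uses that $k$ normalizes $J$, since $K$ does, and that $B$ is normal in $K$ — so $ks_tk^{-1}\in J[B]$ for every $t$; writing $kt=\beta_tu_t$ with $\beta_t\in B$ and $u_t\in T$ the representative of $Bkt$, one obtains
$$ks=\sum_{t\in T_0}\bigl(ks_tk^{-1}\beta_t\bigr)u_t .$$
Because the map $Bt\mapsto Bkt$ is injective on $K/B$, the elements $u_t$ ($t\in T_0$) are pairwise distinct, so there is no cancellation and the $u_{t_1}$-component of $ks$ equals $ks_{t_1}k^{-1}\beta_{t_1}$, which is nonzero since conjugation by $k$ is injective on $J[B]$ and $\beta_{t_1}$ is a unit.

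To finish, $ks=a\in R_0$ forces $\supp(ks)\subseteq T_0$, hence $u_{t_1}\in T_0$, i.e.\ $kt_1\in HB$; since $t_1\in T_0\subseteq HB$ this yields $k\in HB$. As $k\in K$ was arbitrary and $HB\subseteq K$ trivially, we get $K=HB$. The step I expect to need the most care is the no-cancellation argument in the middle paragraph: one must check that restricting the transversal from $T$ to $T_0$ genuinely places $J[H]$ and $R_0$ inside the direct sum $\bigoplus_{t\in T}J[B]t$, and that left multiplication by $k$ permutes the cosets $Bt$ ($t\in T_0$) among those of the form $Bkt$ without annihilating any nonzero coefficient; the rest is routine crossed-product bookkeeping, and if ``ring of right quotients'' is understood more loosely the argument is unaffected provided each $k\in K$ still has the form $as^{-1}$ with $a,s\in J[H]$ and $s$ a unit of $R$.
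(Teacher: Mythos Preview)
The paper does not prove this lemma at all; it merely quotes it from \cite[3.2]{wehrfritz_91}, so there is no in-paper argument to compare against. Your reconstruction is correct and is essentially the standard crossed-product argument: place $J[H]$ inside the graded piece $R_0=\bigoplus_{t\in T_0}J[B]t$ indexed by $T_0=T\cap HB$, use the quotient-ring hypothesis to write $ks=a$ with $a,s\in J[H]\subseteq R_0$, and then observe that left multiplication by $k$ permutes the $B$-cosets injectively while conjugation by $k$ preserves $J[B]$, so a nonzero homogeneous component of $s$ at some $t_1\in T_0$ forces a nonzero component of $ks$ at the representative of $Bkt_1$, which must therefore lie in $T_0$. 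Your caveats are well placed, and the bookkeeping is sound.
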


\begin{theorem} \label{theorem_2.18}
	Let $D$ be non-commutative division ring with center $F$ which contains at least four elements, and $G$ a normal subgroup of $\GL_n(D)$ with $n\geq2$. If $M$ is a non-abelian solvable-by-finite maximal subgroup of $G$ such that $F[M]=\M_n(D)$, then $[D:F]<\infty$.
\end{theorem}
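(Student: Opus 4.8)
The plan is to reduce the theorem to the single assertion that $M$ is abelian-by-finite. Indeed, once $M$ has an abelian subgroup of finite index, Passman's criterion \cite[Lemma 11, p.176]{passman_77} shows that the group algebra $FM$, hence its homomorphic image $F[M]=\M_n(D)$, satisfies a polynomial identity; so the primitive ring $\M_n(D)$ is finite dimensional over its centre $F$ by Kaplansky's theorem, and $[D:F]<\infty$. Now let $N$ be a solvable normal subgroup of finite index in $M$. If $N$ is abelian we are done, so assume $N$ is non-abelian of derived length $s\ge 2$ and put $A=N^{(s-2)}$, a non-abelian metabelian characteristic subgroup of $N$, hence a normal (in particular subnormal) subgroup of $M$. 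Since $D$ is non-commutative and $n\ge 2$, Lemma~\ref{lemma_2.11} applies under its alternative (1): $M$ is primitive, $C_{\M_n(D)}(M)$ is a field, and, $A$ being non-abelian, $R:=F[A]$ is a prime Goldie ring whose classical right quotient ring is $\M_n(D)$, i.e.\ a prime Goldie order in the simple artinian ring $\M_n(D)$. I will use two elementary observations about such an order $R$: first, $C_{\M_n(D)}(R)=F$, since an element commuting with $R$ commutes with every $rs^{-1}$ ($r,s\in R$, $s$ regular), hence with all of $\M_n(D)$; second, if $H$ is a subnormal subgroup of $M$ such that $F[H]$ is an order in $\M_n(D)$ which is moreover locally finite dimensional over $F$, then $F[H]=\M_n(D)$, because in a finite dimensional algebra every non-zero-divisor is a unit.

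Next I would run through the metabelian group $A$ the three-case analysis of the proof of Theorem~\ref{theorem_2.7}, guided by Lemma~\ref{lemma_2.6}. Put $A'=[A,A]$, $B=C_A(A')$ (both normal in $M$) and let $\tau(B)$ be the maximal periodic normal subgroup of $B$; it is characteristic in $B$, hence normal in $M$, and characteristic in $A$ by the argument of \cite[Lemma 3.8]{khanh-hai_almot}. \textbf{Case 1: $\tau(B)$ non-abelian.} Then $\tau(B)$ is a locally finite, non-abelian normal subgroup of $M$; Lemma~\ref{lemma_2.11}(iii) makes $F[\tau(B)]$ an order in $\M_n(D)$ while Lemma~\ref{lemm_2.15} makes it locally finite dimensional over $F$, so $F[\tau(B)]=\M_n(D)$ and $D$ is a locally finite division ring. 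As the solvable-by-finite group $M$ has no non-cyclic free subgroup, $[D:F]<\infty$ follows exactly as in Case~1 of the proof of Theorem~\ref{theorem_2.7} (via \cite[Theorem 3.1]{hai-khanh}). \textbf{Case 2: $\tau(B)$ abelian with an element $x$ of order $4$ outside $Z(B)$.} Then, as in Theorem~\ref{theorem_2.7}, $\langle x\rangle$ is the $2$-primary component of $\tau(B)$, hence normal in $M$, so $x$ is an $FC$-element and $C:=\core_M(C_M(x))$ has finite index in $M$. By Lemma~\ref{lemma_2.9} either $F[C]=\M_n(D)$ --- impossible, since then $x\in C_{\M_n(D)}(F[C])=F\subseteq Z(B)$ --- or $F[C]^*\cap G\subseteq M$; in the latter case $C$ is a subnormal subgroup of $M$ inside $F[C]^*\cap G$, and Lemma~\ref{lemma_2.11}(iii) together with the same contradiction (if $F[C]$ were an order in $\M_n(D)$ then again $x\in C_{\M_n(D)}(F[C])=F$) forces $C$ to be abelian, so $M$ is abelian-by-finite. \textbf{Case 3: the remaining case.} A matrix analogue of Lemma~\ref{lemma_2.6}, in which the division-ring hypotheses are replaced by the Goldie-order structure of $R=F[A]$ (and the ambient normalizer $N_{D^*}(A)$ by $N_{\GL_n(D)}(R^*)\cap G$), should then yield $M'\subseteq H_1$ with $H_1$ abelian, so that $M$ is metabelian.

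It remains to settle the case in which $M$ itself is metabelian, maximal in $G$, with $F[M]=\M_n(D)$; I would handle this by the matrix analogue of \cite[Proposition 3.7]{khanh-hai_almot}, choosing a maximal abelian subgroup $A_0$ of $M$ that is normalized by $M$ (built from the abelian normal subgroup $F[M']^{*}\cap G\supseteq M'$, using Lemma~\ref{lemma_2.9} to rule out $M'\subseteq F$), so that by Lemma~\ref{lemma_2.16} the ring $\M_n(D)=F[M]$ is a crossed product of the maximal subfield $K=F[A_0]$ by $M/A_0$, and then applying Lemma~\ref{lemma_2.17} with $J=F$, $H=A$, $B=A_0$ to obtain $M=A A_0$ and ultimately $\dim_K\M_n(D)=[M:A_0]<\infty$, whence $[D:F]<\infty$. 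I expect the principal obstacle to be exactly this last (metabelian) case, together with the transition from the division-ring setting of Theorem~\ref{theorem_2.7} and Lemma~\ref{lemma_2.6} to the matrix setting: the subgroup $A$ now generates only a Goldie order in $\M_n(D)$ --- in fact it cannot sit inside any division subring, since such a subring would coincide with $\M_n(D)$ --- so every appeal to Corollary~\ref{corollary_2.3} and to identities of the form ``$F(\cdot)=D$'' must be rerouted through centralizer and quotient-ring arguments (Lemma~\ref{lemma_2.9}, Lemma~\ref{lemma_2.11}(iii)); and, crucially, no finiteness of $\dim_F\M_n(D)$ is available until the very end, so the hypothesis $F[M]=\M_n(D)$ and the ``locally finite dimensional $\Rightarrow$ everything'' principle must be carried carefully through the whole metabelian reduction.
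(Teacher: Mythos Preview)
Your overall strategy --- reduce to ``$M$ is abelian-by-finite'' and then invoke Passman's PI criterion together with Kaplansky's theorem --- is sound and would finish the proof if the reduction succeeded. The paper, however, reaches abelian-by-finite by a completely different (and much shorter) route: it invokes Wehrfritz's structure theorem \cite[Theorem~1]{wehrfritz84} to get that $M$ is abelian-by-\emph{locally finite} right away, chooses a maximal abelian normal $A$ with $M/A$ locally finite, shows $K=F[A]$ is a field and $\M_n(D)$ is a crossed product of $K$ by $M/A$ (Lemmas~\ref{lemma_2.11}, \ref{lemma_2.9}, \ref{lemma_2.16}), proves $M/A$ is \emph{simple} via Lemma~\ref{lemma_2.17}, and then uses solvable-by-finite plus simplicity to force $[M:A]<\infty$. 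No metabelian case analysis and no appeal to Lemma~\ref{lemma_2.6} are needed.

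Your attempt to transplant the $n=1$ argument (Theorem~\ref{theorem_2.7}) has two genuine gaps. First, Case~3 rests on a ``matrix analogue of Lemma~\ref{lemma_2.6}'' that you neither state nor prove; Lemma~\ref{lemma_2.6} is Wehrfritz's result about the normalizer of a metabelian group inside a \emph{division ring} it generates, and its proof uses invertibility of elements like $1+j$ and $x+1$ in an essential way --- there is no ready-made version where $F[A]$ is merely a Goldie order in $\M_n(D)$. Second, even granting that $M$ ends up metabelian, your endgame does not close: from Lemma~\ref{lemma_2.17} with $H=A$, $B=A_0$ you would get $M=AA_0$, but this by itself does not give $[M:A_0]<\infty$ (you would need $[A:A\cap A_0]<\infty$, which is not available). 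The paper avoids both issues by proving that $M/A$ is simple; that, combined with the existence of a solvable normal subgroup $N$ of finite index (so either $AN=M$, forcing $M/A\cong\Z_p$ and $n=1$, or $N\subseteq A$, forcing $[M:A]<\infty$), is what actually yields finiteness. If you want to rescue your line of argument, the cleanest fix is to drop the Lemma~\ref{lemma_2.6} detour entirely and start from \cite[Theorem~1]{wehrfritz84}.
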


\begin{proof} 
	First, we observe that $M$ is abelian-by-locally finite (\cite[Theorem 1]{wehrfritz84}). As a result, there exists in $M$ a maximal subgroup $A$ with respect to the property: $A$ is an abelian normal subgroup of $M$ and that $M/A$ is locally finite. In view of  \cite[1.2.12]{shirvani-wehrfritz}, we conclude that $F[A]$ is a semisimple artinian ring; thus, the Wedderburn-Artin Theorem implies that
	$$ F[A] \cong \M_{n_1}(D_1)\times \M_{n_2}(D_2)\cdots\times \M_{n_s}(D_s),$$
	where $D_i$ are division $F$-algebras, $1\leq i\leq s$. Since $F[A]$ is abelian, it follows that the $n_i$'s are equal to 1 and $D_i$'s are fields that contain $F$. Consequently,  
	$$F[A]\cong K_1\times K_2\cdots\times K_s.$$
	With reference to Lemma \ref{lemma_2.11}(iii), we conclude that $F[A]$ is an integral domain and so $s=1$. It follows that $K:=F[A]$ is a subfield of $\M_n(D)$ containing $F$.

	If we set $L=C_{\M_n(D)}(K)$, then by Lemma \ref{lemma_2.11}(iv), one has $L\cong \M_{m}(E)$ for some division $F$-algebra $E$ and some integer $m\geq 1$. Since $M$ normalizes $K$, it also normalizes $L$, and hence either $L^*\cap G\subseteq F$, or $L=\M_n(D)$ or $L^*\cap G\subseteq M$ by Lemma \ref{lemma_2.9}. The first case implies that $\M_n(D)=K$, which contradicts the fact that $n>1$. If the second case occurs, then $A\subseteq F$, from which it follows that $M/M\cap F^*$ is locally finite. In view of Lemma \ref{lemm_2.15}, one has $D$ is a locally finite division ring. Since $M$ contains no non-cyclic free subgroups, by \cite[Theorem 3.1]{hai-khanh}, we conclude that $M$ is abelian-by-finite and $[D:F]<\infty$; we are done.  Now, we consider the third case $L^*\cap G \subseteq M$, which yields that $L^*\cap G$ is a solvable-by-finite normal subgroup of $\GL_{m}(E)$. It follows by Lemma \ref{lemma_2.4} and Lemma \ref{lemma_2.8} that $L^*\cap G\subseteq Z(E)$. In any case, we obtained that $L^*\cap G$ is an abelian normal subgroup of $M$ and $M/L^*\cap G$ is locally finite. By the maximality of $A$ in $M$, it follows $A=L^*\cap G=L^*\cap M=C_M(A)$. Hence, $A$ is actually a maximal abelian subgroup of $M$. Therefore, we may apply Lemma \ref{lemma_2.16} to conclude that $F[M]=\M_n(D)$ is a crossed product of $K$ by $M/A$, and that $K$ is a maximal subfield of $\M_n(D)$.
	
	If we set $L=C_{\M_n(D)}(K)$, then by Lemma \ref{lemma_2.11}(iv), one has $L\cong \M_{m}(E)$ for some division $F$-algebra $E$ and some integer $m\geq 1$. Since $M$ normalizes $K$, it also normalizes $L$; hence, either $L^*\cap G\subseteq F$, or $L=\M_n(D)$ or $L^*\cap G\subseteq M$ (Lemma \ref{lemma_2.9}). The first case implies that $\M_n(D)=K$, which is impossible since $n>1$. If the second case occurs, then $A\subseteq F$, from which it follows that $M/M\cap F^*$ is locally finite. According to \ref{lemm_2.15}, we deduce that $D$ is a locally finite division ring. Now, we can use \cite[Theorem 3.1]{hai-khanh} to conclude that $[D:F]<\infty$. It remains only to consider the third case $L^*\cap G \subseteq M$, from which we have $L^*\cap G$ is a solvable-by-finite normal subgroup of $\GL_{m}(E)$. By applying both Lemmas \ref{lemma_2.4} and \ref{lemma_2.8} to this situation, we obtain that $L^*\cap G\subseteq Z(E)$. Thus, $L^*\cap G$ is an abelian normal subgroup of $M$ and $M/L^*\cap G$ is locally finite. The maximality of $A$ in $M$ yields  $A=L^*\cap G=L^*\cap M=C_M(A)$. These equalities implies that $A$ is actually a maximal abelian subgroup of $M$. In view of Lemma \ref{lemma_2.16}, we conclude that $F[M]=\M_n(D)$ is a crossed product of $K$ by $M/A$, and that $K$ is a maximal subfield of $\M_n(D)$.
	
	Next, we prove that $M/A$ is simple. Suppose that $B$ is an arbitrary normal subgroup of $M$ properly containing $A$. Note that by the maximality of $A$ in $M$, we conclude that $N$ is non-abelian. If we set $R=F[B]$ and $Q$ to be its quotient ring,  Lemma \ref{lemma_2.11}(iii) says that $Q=\M_n(D)$. Now, we may apply Lemma \ref{lemma_2.17} to conclude that $M=AB=B$; recall that $A\subseteq B$. This yields that $M/A$ is simple. 
	
	Our next step is to prove that $M/A$ is simple. To do so, assume that $B$ is an arbitrary normal subgroup of $M$ properly containing $A$. Note that by the maximality of $A$ in $M$, we may assume further that $N$ is non-abelian. If we set $R=F[B]$ and $Q$ to be its quotient ring,  then Lemma \ref{lemma_2.11}(iii) says that $Q=\M_n(D)$. Now, we may apply Lemma \ref{lemma_2.17} to conclude that $M=AB=B$; recall that $A\subseteq B$. It follows that $M/A$ is simple.
	
	Since $M$ is solvable-by-finite, it contains a solvable normal subgroup $N$ such that $M/N$ is finite. Because $AN$ is a normal subgroup of $M$, the simplicity of $M/A$ shows that $AN=M$ or $AN=A$. The first case implies $M$ is solvable. Now, $M/A$ is simple and solvable, one has $M/A\cong \Z_p$ for some prime number $p$. By Lemma~ \ref{lemma_2.16}, it follows $\dim_K\M_n(D)=|M/A|=p$, which forces $n=1$, a contradiction. Thus, we have $AN=A$, from which it follows that $[M:A]<\infty$. Again by Lemma \ref{lemma_2.16}, one has $[\M_n(D):K]=|M/A|<\infty$, and hence $[D:F]<\infty$.
	
	Since $M$ is solvable-by-finite, it contains a solvable normal subgroup $N$ such that $M/N$ is finite. As $AN$ is also a normal subgroup of $M$, the simplicity of $M/A$ shows that $AN=M$ or $AN=A$. The first case implies $M$ is solvable, which also says that $M/A$ is solvable. Now, $M/A$ is simple and solvable, one has $M/A\cong \Z_p$ for some prime number $p$. By Lemma \ref{lemma_2.16}, it follows that $\dim_K\M_n(D)=|M/A|=p$, which forces $n=1$, an obvious contradiction. As a result, we must have $AN=A$, from which it follows that $[M:A]<\infty$. Again by Lemma \ref{lemma_2.16}, one has $[\M_n(D):K]=|M/A|<\infty$ and so $[D:F]<\infty$.
\end{proof}

\begin{corollary}\label{corollary_2.19}
	Let $D$ be a non-commutative division ring with center $F$ which contains at least five elements, and $G$ a normal subgroups of $\GL_n(D)$ with $n\geq2$. If $M$ is a solvable maximal subgroup of $G$, then $M$ is abelian.
\end{corollary}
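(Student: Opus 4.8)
The plan is to combine Theorem \ref{theorem_2.14} and Theorem \ref{theorem_2.18} through the dichotomy on $F[M]$. Since a solvable group is in particular solvable-by-finite and $F$ has at least five (hence at least four) elements, both theorems are applicable to $M$. If $F[M]\ne\M_n(D)$, then Theorem \ref{theorem_2.14} immediately gives that $M$ is abelian and we are done, so all the content lies in the case $F[M]=\M_n(D)$, which I would treat by contradiction: assume $M$ is non-abelian.

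Under this assumption Theorem \ref{theorem_2.18} yields $[D:F]<\infty$; write $m=\sqrt{[D:F]}$, which is an integer $\ge 2$ because $F$ is the center of the non-commutative $D$, so that $\M_n(D)$ is a central simple $F$-algebra of degree $nm$. I would then revisit the proof of Theorem \ref{theorem_2.18}, which produces a maximal abelian normal subgroup $A$ of $M$ such that $K:=F[A]$ is a maximal subfield of $\M_n(D)$, the ring $F[M]=\M_n(D)$ is a crossed product of $K$ by $M/A$, and $M/A$ is simple. The extra ingredient available here, beyond the solvable-by-finite hypothesis of Theorem \ref{theorem_2.18}, is that $M$ itself is solvable; hence $M/A$ is a solvable simple group and therefore $M/A\cong\Z_p$ for some prime $p$. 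By Lemma \ref{lemma_2.16} the crossed-product decomposition then gives $[\M_n(D):K]=|M/A|=p$.

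Finally I would extract the contradiction from a dimension count: a maximal subfield $K$ of a central simple $F$-algebra of degree $nm$ satisfies $[\M_n(D):K]=nm$, and since $n\ge 2$ and $m\ge 2$ this is a product of two integers each $\ge 2$, hence composite, contradicting $[\M_n(D):K]=p$. Therefore $M$ must be abelian. The only delicate point is that one appeals to the internal structure uncovered inside the proof of Theorem \ref{theorem_2.18} rather than to its stated conclusion alone; equivalently, one may observe that the hypothesis ``$M$ solvable'' forces the alternative ``$AN=M$'' in that proof, a case which was already shown there to be contradictory. No genuinely new obstacle arises.
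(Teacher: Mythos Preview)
Your proposal is correct and essentially coincides with the paper's own proof. Both argue via the dichotomy on $F[M]$, dispatch the case $F[M]\ne\M_n(D)$ with Theorem~\ref{theorem_2.14}, and in the case $F[M]=\M_n(D)$ appeal to the internal structure established in the proof of Theorem~\ref{theorem_2.18}: the paper simply points to ``the last paragraph'' of that proof (where solvability of $M$ forces the branch $AN=M$, leading to $M/A\cong\Z_p$ and the contradiction $n=1$), while you spell out the same dimension count more explicitly after first invoking the stated conclusion $[D:F]<\infty$ to justify $[\M_n(D):K]=n\sqrt{[D:F]}$.
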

\begin{proof}
	If $F[M]\ne \M_n(D)$, then the result follows from Theorem \ref{theorem_2.14}. In the case $F[M]=\M_n(D)$, if $M$ is non-abelian, then  the last paragraph of the proof of the above theorem says that $n=1$, a contradiction.
\end{proof}

Here now is the proof of the main theorem of this section.

\bigskip

\noindent{\it \textbf{Proof of Theorem~\ref{theorem_1.1}}}. Combining three Theorems \ref{theorem_2.7}, \ref{theorem_2.14} and \ref{theorem_2.18}, we get $[D:F]<\infty$. By hypothesis, we conclude that $M$ contains no non-cyclic free subgroups. Therefore, most of conclusions follow from \cite[Theorem 3.1]{hai-khanh}. From the maximality of $K$, we deduce that $C_{\M_n(D)}(K)=K$. By Centralizer Theorem (\cite[(vii), p.42] {draxl}), one has $[K:F]^2=[\M_n(D):F]=n^2[D:F]$. It follows that $|M/K^*\cap G|=|\Gal(K/F)|=[K:F]=n\sqrt{[D:F]}$.   $\square$

\bigskip

Other results concerning solvable-by-finite subgroups of $\GL_n(D)$, where $D$ is a centrally finite division ring and $n\geq 1$, were nicely obtained by Wehrfritz in \cite{wehrfritz_07}. %He asserted 
In fact, he proved that if $M$ is a solvable-by-finite subgroup of $\GL_n(D)$, then it contains an abelian normal subgroup of index dividing $b(n)[D:F]^n$, where $b(n)$
is an integer valued function that depends only on $n$. In view of Theorem~\ref{theorem_1.1}, if $M$ is supposed further to be a maximal subgroup of an almost subnormal subgroup of $\GL_n(D)$, then $M$ possesses  an abelian normal subgroup of very explicit index.

Theorem~\ref{theorem_1.1} also gives some interesting corollaries having close relation to the results obtained in \cite{dorbidi2011}, \cite{hazrat}, \cite{nassab14}, and \cite{wehrfritz_07}. 
More precisely, the authors of  \cite{hazrat} asked whether a division $D$ is a crossed product if the multiplicative group $D^*$ contains an absolutely irreducible solvable-by-finite subgroup $M$ (Question 2.5).  
By definition, a centrally finite division $D$  is called a \textit{crossed product} if it contains a maximal subfield that is a Galois extension over the center of $D$. The following corollary, which follows immediately from  Theorem \ref{theorem_1.1}, shows that the question has a positive answer if $M$ is a non-abelian solvable-by-finite maximal subgroup of an almost subnormal subgroup of $D^*$.

\begin{corollary}
	Let $D$ be a division ring, and $G$ an almost subnormal subgroup of $D^*$. If $M$ is a non-abelian solvable-by-finite maximal subgroup of $G$, then $D$ is a crossed product.
\end{corollary}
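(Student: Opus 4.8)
The plan is to reduce everything to the case $n=1$ of Theorem~\ref{theorem_1.1}, since here $G$ is an almost subnormal subgroup of $\GL_1(D)=D^*$. The hypotheses of Theorem~\ref{theorem_1.1} with $n=1$ differ from ours only in requiring $D$ to be non-commutative and $F=Z(D)$ to contain at least five elements, and both of these can be recovered cheaply. First, $D$ must be non-commutative: if $D$ were a field, then $D^*$ and each of its subgroups — in particular $M$ — would be abelian, contradicting the hypothesis that $M$ is non-abelian.

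Next, to dispose of the cardinality restriction on $F$, I would invoke Theorem~\ref{theorem_2.7}, whose hypotheses are met by $M$ with no assumption on $|F|$; it yields that $M$ is abelian-by-finite and, crucially, that $[D:F]<\infty$. Thus $D$ is centrally finite. If $F$ were finite, then $[D:F]<\infty$ would force $D$ to be a finite division ring, hence commutative by Wedderburn's little theorem, contradicting the non-commutativity just established. Therefore $F$ is infinite, and in particular $|F|\geq 5$.

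With $D$ non-commutative and $|F|\geq 5$ in hand, Theorem~\ref{theorem_1.1} applies with $n=1$ and produces a maximal subfield $K$ of $\M_1(D)=D$ containing $F$ such that $K/F$ is a Galois extension. Since $D$ is centrally finite and possesses a maximal subfield Galois over its center, $D$ is by definition a crossed product, which is the desired conclusion. One may equally bypass the appeal to the full Theorem~\ref{theorem_1.1}: Theorem~\ref{theorem_2.7} already shows $M$ is abelian-by-finite, hence contains no non-cyclic free subgroup, so \cite[Theorem 3.1]{hai-khanh} directly supplies the Galois maximal subfield. Since every step is a citation of an already-established result, there is no genuine obstacle here; the only point requiring a moment's care is the elimination of the hypothesis $|F|\geq 5$, which is precisely where Theorem~\ref{theorem_2.7} and Wedderburn's theorem enter.
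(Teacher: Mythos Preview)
Your proof is correct and follows essentially the same route as the paper, which simply states that the corollary ``follows immediately from Theorem~\ref{theorem_1.1}.'' You are more careful than the paper in explicitly recovering the hypotheses that $D$ is non-commutative and $|F|\geq 5$ (via Theorem~\ref{theorem_2.7} and Wedderburn's little theorem), which the paper's one-line justification glosses over; this extra care is warranted since the corollary as stated omits those assumptions.
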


Polycyclic-by-finite maximal subgroups of $\GL_n(D)$  have already been studied in \cite{nassab14}. One of the main results of \cite{nassab14} states that $\GL_n(D)$ contains no polycyclic-by-finite maximal subgroups if $n=1$ or the center of $D$ contains at least five elements (\cite[Theorem B]{nassab14}). In the next corollary, we extend this result to polycyclic-by-finite maximal subgroups of an almost subnormal subgroup of $\GL_n(D)$.

\begin{corollary}
	Let $D$, $F$, $G$ as in Theorem \ref{theorem_1.1}. If $M$ is finitely generated solvable-by-finite maximal subgroup of $G$, then $M$ is abelian. In particular, if $M$ polycyclic-by-finite, then it is abelian.
\end{corollary}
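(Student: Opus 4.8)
The plan is to assume, for contradiction, that $M$ is non-abelian and then to exhibit a subgroup of $M$ that is not finitely generated; the ``in particular'' clause then needs no separate treatment, since every polycyclic-by-finite group is finitely generated and solvable-by-finite. So suppose $M$ is non-abelian. Then Theorem~\ref{theorem_1.1} applies and yields $[D:F]<\infty$ together with a maximal subfield $K$ of $\M_n(D)$ containing $F$ such that $A:=K^*\cap G$ is the Fitting subgroup of $M$, $A\unlhd M$, and $M/A\cong\Gal(K/F)$ is finite of order $[K:F]=n\sqrt{[D:F]}$; moreover $[K:F]\geq2$ because $M$ is non-abelian while $A$ is abelian. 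Since $M$ is finitely generated and $A$ has finite index in $M$, the abelian group $A$ is finitely generated, so every subgroup of $A$ is finitely generated as well. It therefore suffices to produce an infinitely generated subgroup of $A$.

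To this end, choose $\sigma\in\Gal(K/F)$ of prime order $q$ and set $E=K^{\langle\sigma\rangle}$, so that $K/E$ is cyclic of degree $q$; here $E\supseteq F$ is infinite, for a finite $F$ together with $[D:F]<\infty$ would make $D$ commutative by Wedderburn's little theorem. By Hilbert's Theorem~90 for the cyclic extension $K/E$, the map $yE^*\mapsto\sigma(y)y^{-1}$ is an isomorphism of $K^*/E^*$ onto the norm-one subgroup $\ker(N_{K/E})\leq K^*$. Pick $b\in M$ whose image under $M\to M/A\cong\Gal(K/F)$ is $\sigma$; since this isomorphism is induced by conjugation on $K$, we get $\sigma(y)y^{-1}=byb^{-1}y^{-1}=[b,y]\in K^*$ for every $y\in K^*$. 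When $n\geq2$, almost subnormal subgroups of $\GL_n(D)$ are normal (\cite[Theorem 3.3]{ngoc_bien_hai_17}), so $G\unlhd\GL_n(D)$, whence $[b,y]=b\cdot(yb^{-1}y^{-1})\in G$ and therefore $\ker(N_{K/E})=\{[b,y]:y\in K^*\}\subseteq K^*\cap G=A$; thus $A$ contains an isomorphic copy of $K^*/E^*$. When $n=1$, $G$ need not be normal in $D^*$, but Proposition~\ref{proposition_2.2} provides a non-central subnormal subgroup $N\leq G$ of $D^*$, and a small additional argument --- using that $N$ is large in $D^*$, so that $N\cap N_{D^*}(K^*)$ still surjects onto a nontrivial subgroup of $\Gal(K/F)$, and that $N$ is invariant under the relevant conjugations --- again places a copy of $K^*/E^*$ inside $A$.

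Finally one invokes the field-theoretic fact that the multiplicative quotient $K^*/E^*$ of a proper finite extension $K/E$ of an infinite field $E$ is never finitely generated. When $E$ is uncountable this is immediate: the cosets of $E^*$ in $K^*$ are precisely the $E$-lines of the vector space $K$, so $|K^*/E^*|=|\mathbb{P}^{[K:E]-1}(E)|=|E|$ is uncountable, while a finitely generated group is countable. When $E$ is countable it follows from the arithmetic of fields --- $K^*/E^*$ acquires infinite torsion-free rank from the places of $E$ that split in $K$, and is an infinite torsion group in the exceptional case that $E$ is absolutely algebraic. In all cases $A$ then has an infinitely generated subgroup, contradicting its finite generation; hence $M$ is abelian. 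The core of the proof --- and the step demanding the most care --- is exactly this field-theoretic lemma, together with, in the case $n=1$, the verification that the commutators $\sigma(y)y^{-1}$ with $y$ ranging over all of $K^*$ really do land inside $G$.
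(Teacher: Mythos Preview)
Your approach differs from the paper's: the paper simply assumes $M$ is non-abelian, applies Theorem~\ref{theorem_1.1} to obtain $[D:F]<\infty$, and then invokes \cite[Corollary~3]{mah2000}, which directly asserts that in this centrally finite situation no maximal subgroup can be finitely generated. Your route is more explicit --- you use the detailed output of Theorem~\ref{theorem_1.1} (the maximal subfield $K$ and the finite-index abelian subgroup $A=K^*\cap G$) and then try to embed $K^*/E^*$ into $A$ via Hilbert~90, finishing with the fact that $K^*/E^*$ is not finitely generated.

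For $n\geq 2$ your argument is sound: almost subnormality upgrades to normality by \cite[Theorem~3.3]{ngoc_bien_hai_17}, and then the commutator computation $[b,y]=\sigma(y)y^{-1}\in K^*\cap G=A$ goes through. The field-theoretic lemma is true, though your justification for countable $E$ (``infinite torsion-free rank from the places of $E$ that split in $K$'') presupposes $E$ is global; a cleaner uniform argument is needed, but this is a known fact.

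The genuine gap is the case $n=1$. There $G$ is merely almost subnormal in $D^*$, and your commutator $[b,y]$ with $b\in M$ and $y\in K^*$ has no reason to lie in $G$. Your proposed fix --- pass to a non-central subnormal $N\leq G$ via Proposition~\ref{proposition_2.2}, and claim that $N\cap N_{D^*}(K^*)$ surjects onto a nontrivial piece of $\Gal(K/F)$ with the resulting commutators landing in $A$ --- is not a proof. Subnormality of $N$ does not make $N$ invariant under conjugation by arbitrary $y\in K^*$, so $[b',y]$ need not lie in $N$ (or in $G$) even when $b'\in N$; and the surjectivity claim is asserted without justification. What you call ``a small additional argument'' is in fact the crux of the $n=1$ case, and as written it is missing. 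One possible repair is to note that, by Skolem--Noether, each $\sigma\in\Gal(K/F)$ is induced by some $b\in D^*$, so $\sigma(y)y^{-1}=[b,y]\in D'\cap K^*$; if you can then show $D'\subseteq G$ (which requires a separate result on subnormal subgroups of centrally finite division rings), the argument closes. The paper's citation to \cite{mah2000} sidesteps all of this.
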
 
\begin{proof}
	With reference to Theorem \ref{theorem_1.1}, we have $[D:F]<\infty$. But then $M$ cannot be finitely generated in view of \cite[Corollary 3]{mah2000}. The rest of our corollary follows immediately from the fact that every polycyclic-by-finite group is finitely generated.
\end{proof}

\section{Locally solvable maximal subgroups}

In this section, we study locally solvable maximal subgroups of an almost subnormal subgroup of $\GL_n(D)$, with $n\geq 1$. We note that in the case $n=1$, the following results were obtained in \cite{khanh-hai_almot}. 
\begin{theorem}[{\cite[Theorem 3.6]{khanh-hai_almot}}]\label{theorem_3.1}
	Let $D$ be a division ring with center $F$, and $G$ an almost subnormal subgroup of $D^*$. If $M$ is a locally nilpotent maximal subgroup of $G$,  then $M$ is abelian.
\end{theorem}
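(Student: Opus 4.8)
The plan is to suppose $M$ is non-abelian and derive a contradiction, following the pattern used above for solvable-by-finite maximal subgroups but exploiting that a locally nilpotent group contains no non-cyclic free subgroup and that, in a nilpotent group, commutators eventually land in the centre.

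\textbf{Reduction to $F(M)=D$.} Since $M$ normalizes the division subring $F(M)$, maximality forces $N_{D^*}(F(M)^*)\cap G\in\{M,G\}$. If it equals $M$, then $F(M)^*\cap G\subseteq M$, so $M\unlhd F(M)^*\cap G$ and hence $M$ is almost subnormal in $F(M)^*$; being locally nilpotent, $M$ is in particular (locally solvable)-by-finite, so Lemma~\ref{lemma_2.4} puts $M$ in the centre of $F(M)$, whence $F(M)$ is commutative and $M$ abelian — a contradiction. If it equals $G$, then $F(M)$ is normalized by $G$; if $G\subseteq F$ then $M$ is abelian, and otherwise Corollary~\ref{corollary_2.3} gives $F(M)\subseteq F$ (again $M$ abelian) or $F(M)=D$. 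So from now on $F(M)=D$ and $D$ is non-commutative.

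\textbf{The subfield $K=F(A)$.} Let $A$ be a maximal abelian normal subgroup of $M$ (Zorn). Running the same dichotomy for $F(A)$: the case $N_{D^*}(F(A)^*)\cap G=G$ yields, by Corollary~\ref{corollary_2.3}, $F(A)\subseteq F$ or $F(A)=D$, the latter impossible as $D$ is non-commutative; the case $N_{D^*}(F(A)^*)\cap G=M$ yields that $F(A)^*\cap G$ is an abelian subgroup of $M$ normalized by $M$, hence $F(A)^*\cap G=A$ by maximality. Thus, discarding the trivial outcomes, $K:=F(A)$ is a proper subfield of $D$ with $K^*\cap G=A$, with $A\unlhd M$ and $K$ normalized by $M$, and $A\not\subseteq F$. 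The essential point is to upgrade this to: $A$ is a \emph{maximal abelian} subgroup of $M$. Granting this, Lemma~\ref{lemma_2.16} shows that $D=F[M]$ is a crossed product of $K$ by $M/A$ and that $K$ is a maximal subfield of $D$; consequently $C_D(A)=C_D(K)=K$, so $C_M(A)=M\cap K=A$, the group $M/A$ acts faithfully on $K$ by conjugation, its fixed field is $C_K(M)=Z(D)=F$, and $D=\bigoplus_{t\in T}Kt$ for any transversal $T$ of $A$ in $M$.

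\textbf{Ruling out $M\ne A$.} It remains to contradict $M/A\ne 1$. Choosing a finitely generated non-abelian (hence nilpotent) subgroup $M_0$ of $M$ and taking $z\ne1$ in the last non-trivial term of its lower central series produces $x,v\in M_0$ with $x^{-1}vx=vz$ and $z\in Z(M_0)\setminus\{1\}$, so $x$ induces a non-trivial automorphism of the field $F(v)$; one then analyses separately whether $z$ has finite order (a local cyclic-algebra configuration inside $D$) or infinite order. Using a structural description of the relevant normalizers — e.g. Lemma~\ref{lemma_2.6} applied to a suitable metabelian subgroup built from $A$ and such elements — together with the absence of non-cyclic free subgroups in $M$, one shows that $M$ would be properly contained in a still-proper subgroup of $G$ (the normalizer of an appropriate maximal subfield), contradicting maximality. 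I expect this step, and the claim in the previous paragraph that $A$ is maximal abelian, to be the main obstacle: the latter is false for abstract locally nilpotent groups (there are locally nilpotent groups with trivial centre), so it must genuinely use the embedding of $M$ into a division ring, and once $F(M)=D$ is in hand essentially all the real work is concentrated there. As a fallback, whenever the argument forces $D$ to be locally finite over $F$ (for instance when the torsion subgroup of $M$ already generates $D$), one finishes via \cite[Theorem 3.1]{hai-khanh} — applicable since $M$ has no non-cyclic free subgroup — which gives $[D:F]<\infty$ and $M$ abelian-by-finite, and then invokes the nilpotent case (\cite{ebrahimian_04}, \cite{nassab14} together with Proposition~\ref{proposition_2.2}).
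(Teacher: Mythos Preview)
This theorem is not proved in the present paper; it is simply quoted from \cite{khanh-hai_almot}, so there is no in-paper argument to compare your proposal against.

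Your proposal is candid about being incomplete, and the gaps you flag are real. Two points deserve emphasis.

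First, the passage from ``$A$ is a maximal abelian \emph{normal} subgroup of $M$'' to ``$A$ is a maximal abelian subgroup of $M$'' (equivalently $C_M(A)=A$) is, as you say, the crux, and you do not establish it. Your argument that $K^*\cap G=A$ only uses maximality among abelian \emph{normal} subgroups; it does not give $C_M(A)\subseteq A$, and without this Lemma~\ref{lemma_2.16} is inapplicable. You also discard the sub-case $A\subseteq F$ as a ``trivial outcome'', but it is not: for abstract locally nilpotent groups the centre can coincide with every maximal abelian normal subgroup (indeed the centre can be trivial), so showing that a non-abelian locally nilpotent $M\leq D^*$ with $F(M)=D$ must possess a non-central abelian normal subgroup already requires a genuine division-ring argument that you have not supplied.

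Second, the entire ``Ruling out $M\ne A$'' paragraph is a programme rather than a proof. You produce $x,v,z$ with $x^{-1}vx=vz$, split on the order of $z$, and then write that ``one shows that $M$ would be properly contained in a still-proper subgroup of $G$'' without carrying any of this out. The appeal to Lemma~\ref{lemma_2.6} is also premature: that lemma requires a metabelian subgroup $A$ with $D=E(A)$ and $E\leq C_D(A)$, and you have not exhibited such data. The fallback via local finiteness and \cite[Theorem~3.1]{hai-khanh} is sound when it applies, but you have not shown that the non-locally-finite branches cannot occur.

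In short, the reduction to $F(M)=D$ is correct, but everything after the introduction of $A$ is a sketch with substantive missing steps; since the paper itself provides no proof, you would need to consult \cite{khanh-hai_almot} for the actual argument.
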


\begin{theorem}[{\cite[Theorem 3.7]{khanh-hai_almot}}]\label{theorem_3.2}
	Let $D$ be a division ring with center $F$, and $G$ an almost subnormal subgroup of $D^*$. If $M$ is a non-abelian locally solvable maximal subgroup of $G$, then the following hold:
	\begin{enumerate}[font=\normalfont]
		\item There exists a maximal subfield $K$ of $D$ such that $K/F$ is a finite Galois extension with $\mathrm{Gal}(K/F)\cong M/K^*\cap G\cong \mathbb{Z}_p$ and $[D:F]=p^2$, for  some prime number $p$. 
		\item The subgroup $K^*\cap G$ is the $FC$-center. Also, $K^*\cap G$ is the Hirsch-Plotkin radical of $M$. For any $x\in M\setminus K$, we have $x^p\in F$ and $D=F[M]=\bigoplus_{i=1}^pKx^i$.
	\end{enumerate}
\end{theorem}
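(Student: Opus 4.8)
The plan is to reduce Theorem~\ref{theorem_3.2} to the solvable-by-finite case. Once $M$ is shown to be solvable-by-finite (indeed abelian-by-finite), Theorem~\ref{theorem_2.7} gives $[D:F]<\infty$ — so $F$ is infinite by Wedderburn's little theorem, and hence Theorem~\ref{theorem_1.1} applies with $n=1$ — and Theorem~\ref{theorem_1.1} then furnishes a maximal subfield $K$ of $D$ with $K/F$ Galois, $A:=K^*\cap G$ the Fitting subgroup of $M$, $N_{D^*}(K^*)\cap G=M$, $A\unlhd M$, and $M/A\cong\Gal(K/F)$ a finite simple group. Being a quotient of the locally solvable group $M$, $\Gal(K/F)$ is solvable, hence $M/A\cong\Z_p$ for a prime $p$; thus $[K:F]=p$ and $[D:F]=[K:F]^2=p^2$ by the Centralizer Theorem. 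For $x\in M\setminus K$ (note $M\cap K^*=K^*\cap G=A$, so $M\setminus K=M\setminus A$), the coset $xA$ generates $M/A$, whence $x^p\in A\subseteq K$; since $x^p$ commutes with $x$ it is fixed by a generator of $\Gal(K/F)$, so $x^p\in F$, and $D=F[M]=\bigoplus_{i=1}^{p}Kx^i$ from the crossed-product decomposition. Finally, $A$ is abelian and normal of prime index $p$ with $C_M(A)=A$, so any abelian, locally nilpotent, or $FC$-normal subgroup of $M$ properly containing $A$ would equal $M$, making $M$ locally nilpotent and hence abelian by Theorem~\ref{theorem_3.1} — a contradiction; so $A$ is at once the $FC$-center and the Hirsch--Plotkin radical of $M$.

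So the whole problem reduces to showing that a non-abelian locally solvable maximal subgroup $M$ of $G$ is abelian-by-finite (note $D$ is non-commutative here, since otherwise $D^*=F^*$ is abelian). This parallels the proof of Theorem~\ref{theorem_2.7}. First, the proof of Lemma~\ref{lemma_2.5} goes through verbatim with ``solvable-by-finite'' replaced by ``locally solvable'', its only input being Lemma~\ref{lemma_2.4}, which applies to locally solvable almost subnormal subgroups of $D^*$; thus for every normal subgroup $A$ of $M$, either $A$ is abelian or $F(A)=D$, and in particular $F(M)=D$. Next, one picks a maximal abelian normal subgroup $A$ of $M$ (Zorn's Lemma) and shows, as in Theorem~\ref{theorem_2.7}: (a) $A\not\subseteq F$ — otherwise $Z(M)\subseteq A$, applying the dichotomy to $M'$ forces $M'$ abelian (the case $F(M')=D$ being excluded via Corollary~\ref{corollary_2.3} and maximality of $M$), hence $M'\subseteq A\subseteq F$ and $M$ is nilpotent, so abelian by Theorem~\ref{theorem_3.1}, a contradiction; (b) $K:=F(A)$, a field normalized by $M$, satisfies $N_{D^*}(K^*)\cap G=M$ (maximality of $M$, the alternative $=G$ being ruled out by Corollary~\ref{corollary_2.3}), whence $A=K^*\cap G$; and (c) $C_M(A)=A$, so that $A$ is a maximal abelian subgroup of $M$ and Lemma~\ref{lemma_2.16} exhibits $F[M]$ as a crossed product of $K$ by $M/A$. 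It then remains to prove $[M:A]<\infty$; the route is to locate inside $M$ a non-abelian metabelian (or periodic) normal subgroup generating $D$, apply Wehrfritz's normalizer description (Lemma~\ref{lemma_2.6}) to the associated data, conclude that $D$ is a locally finite division ring, and then invoke \cite[Theorem 3.1]{hai-khanh} — since the locally solvable group $M$ has no non-cyclic free subgroup — to obtain $[D:F]<\infty$ and $M$ abelian-by-finite.

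The main obstacle is exactly this last point together with parts (a) and (c): because $M$ is only locally solvable, there is no finite derived series to descend through — general locally solvable groups can even be perfect — so non-abelian locally solvable normal subgroups $N$ of $M$ with $F(N)=D$ cannot be disposed of by passing to the bottom of a derived series as in \cite{hai-khanh}, and must instead be controlled through the maximality of $M$, Corollary~\ref{corollary_2.3}, the crossed-product Lemmas~\ref{lemma_2.16} and~\ref{lemma_2.17}, Lemma~\ref{lemma_2.6}, and Theorem~\ref{theorem_3.1}. Once $M$ is known to be abelian-by-finite, the remaining assertions are a direct application of Theorems~\ref{theorem_2.7} and~\ref{theorem_1.1} plus the elementary fact that a finite simple solvable group is cyclic of prime order.
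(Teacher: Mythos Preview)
The paper does not prove Theorem~\ref{theorem_3.2}; it is quoted verbatim from \cite[Theorem~3.7]{khanh-hai_almot} and used as input, not as a result to be established here. So there is no ``paper's own proof'' to compare your attempt against, and any derivation you give must avoid circularity with the parts of \cite{khanh-hai_almot} already invoked in Section~2 (notably \cite[Proposition~3.7]{khanh-hai_almot} in the proof of Theorem~\ref{theorem_2.7}).

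On the substance of your sketch: once $M$ is known to be abelian-by-finite, your deduction of items (1) and (2) from Theorems~\ref{theorem_2.7} and~\ref{theorem_1.1} is essentially correct and pleasant --- the observation that the finite simple group $\Gal(K/F)$ is a quotient of a locally solvable group, hence cyclic of prime order, is exactly the right endgame, and the verification that $x^p\in F$ via the Galois action is clean. The identification of $A$ with the Hirsch--Plotkin radical is fine; for the $FC$-center you should say a word more than ``making $M$ locally nilpotent'', since an $FC$-group is not automatically locally nilpotent --- rather, $M$ would be centre-by-(locally finite), and one then has to feed that back into the structure of $D$.

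The genuine gap is the one you yourself flag: the reduction from \emph{locally solvable} to \emph{abelian-by-finite}. Your steps (a) and (c) are not justified as written. In (a), the dichotomy from (the locally solvable analogue of) Lemma~\ref{lemma_2.5} gives only ``$M'$ abelian or $F(M')=D$'', and nothing in Corollary~\ref{corollary_2.3} or maximality excludes the second alternative; indeed $F(M')=D$ is precisely the generic situation. In (c), a maximal abelian \emph{normal} subgroup need not be self-centralising, so $C_M(A)=A$ requires an argument. Most seriously, the plan to ``locate inside $M$ a non-abelian metabelian normal subgroup generating $D$'' and then invoke Lemma~\ref{lemma_2.6} presupposes exactly the kind of descent along a derived series that local solvability does not provide --- you note this yourself. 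That step is the heart of \cite[Theorem~3.7]{khanh-hai_almot}, and it is not recoverable from the tools assembled in the present paper alone. In short: the post-reduction part of your argument is sound, but the reduction itself is where the actual work lies, and your proposal does not close it.
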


For any group $G$, we denote by $\tau(G)$ the unique maximal locally finite normal subgroup of $G$, $\eta(G)$ the Hirsch-Plotkin radical of $G$, and $\alpha(G)$ and $\beta(G)$ the two subgroups of $G$ which are defined by 
$$\beta(G)/\tau(G)=\eta(G/\tau(G)) \;\;\;\;\;\;\hbox{and} \;\;\;\;\;\; \alpha(G)/\tau(G)=Z(\beta(G)/\tau(G)).$$

\begin{lemma}[{\cite{wehrfritz_90_criteriaII}}]\label{lemma_3.3}
	Let $G$ be a locally solvable primitive subgroup of $\GL_n(D)$ with $n\geq 1$. Then the $F$-subalgebra $F[G]$ of the full matrix ring $\M_n(D)$ generated by $G$ is a crossed product over the (locally finite)-by-abelian normal subgroup $\alpha(G)$ of $G$.
\end{lemma}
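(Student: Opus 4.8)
The plan is to obtain the crossed-product structure by exhibiting the transversal decomposition directly. Writing $A=\alpha(G)$, it suffices by the definition recalled before Lemma~\ref{lemma_2.16} to verify that $A$ is normal in $G$, that $G\cap F[A]=A$, and that $F[G]=\bigoplus_{t\in T}t\,F[A]$ for some (hence any) transversal $T$ of $A$ in $G$. The group-theoretic part is immediate: $\tau(G)$, $\beta(G)$ and $\alpha(G)$ are each characteristic in $G$, hence normal, while $\tau(G)$ is locally finite by definition and $\alpha(G)/\tau(G)=Z(\beta(G)/\tau(G))$ is abelian, so $A$ is a (locally finite)-by-abelian normal subgroup of $G$. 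Everything else hinges on the ring-theoretic decomposition, where both hypotheses --- primitivity and local solubility --- must be used.

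The key structural step is to analyse the action of $A$ on $V=D^n$. Since $V$ has length $n$ as a left $D$-module, it has finite length over the subring of $\M_n(D)$ generated by $D$ together with any normal subgroup $N$ of $G$; a socle argument (the $D$-$N$-socle of $V$ is invariant under all of $G$, hence equals $V$) then shows that $V$ is semisimple as a $D$-$N$-bimodule and that $G$ permutes its homogeneous components. Primitivity forbids any nontrivial such permutation, so $V$ is $A$-homogeneous; consequently $C:=C_{\M_n(D)}(A)$ is a simple artinian ring, and so is its centralizer $C':=C_{\M_n(D)}(C)$, with $F[A]\subseteq C'$ and $C\cap C'=Z(C)=Z(C')$. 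Local solubility enters precisely here, as in Wehrfritz's analysis of locally soluble primitive linear groups: it guarantees that the locally finite part $\tau(G)$ and the locally nilpotent section $\beta(G)/\tau(G)$ act completely reducibly, which lets the homogeneity and the simple-artinian structure propagate up the chain $\tau(G)\unlhd\beta(G)\unlhd G$ to $A$, and it also yields the self-centralizing identity $G\cap C'=A$.

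Granting these facts the crossed-product decomposition follows formally. If $t_1,\dots,t_m$ represent distinct cosets of $A$ in $G$, they induce automorphisms $\sigma_1,\dots,\sigma_m$ of the simple artinian ring $C$, and since $t_it_j^{-1}\notin A=G\cap C'$ the $\sigma_i$ are pairwise distinct even modulo inner automorphisms. An Artin-type independence argument for automorphisms of $C$ --- feeding a hypothetical minimal relation $\sum_i c_i't_i=0$ with $c_i'\in C'$ through right multiplication by elements of $C$, in the spirit of the computation in the proof of Lemma~\ref{lemma_2.16} --- then forces every $c_i'=0$, so the $t_i$ are linearly independent over $C'$, hence over $F[A]$. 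Since $F[G]=\sum_{t\in T}t\,F[A]$ holds trivially, the sum is direct; and $G\cap F[A]\subseteq G\cap C'=A$ gives $G\cap F[A]=A$. Therefore $F[G]$ is a crossed product of $F[A]$ by $G/A$ over $A$, as asserted.

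The step expected to be the main obstacle is the structural one of the second paragraph. Without primitivity a (locally finite)-by-abelian subgroup of $\GL_n(D)$ can generate a subring that is very far from simple artinian --- for instance one of infinite uniform dimension when $[D:F]=\infty$ --- so the homogeneity of the $A$-action, the simple-artinian structure of $C$ and $C'$, and the self-centralizing identity $G\cap C_{\M_n(D)}(C_{\M_n(D)}(A))=A$ are exactly the points that require genuine work, and it is there that local solubility is indispensable, via complete reducibility of $\tau(G)$ and of $\beta(G)/\tau(G)$.
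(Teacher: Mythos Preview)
The paper does not prove this lemma at all: it is quoted verbatim from Wehrfritz \cite{wehrfritz_90_criteriaII} and used as a black box in the proof of Theorem~\ref{theorem_1.2}. So there is no proof in the paper to compare against; your proposal is an attempt to reconstruct Wehrfritz's argument.

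Your outline correctly identifies the architecture of that argument --- homogeneity of $V$ under $A$ via primitivity, simple-artinian structure of the centralizer and double centralizer, and an Artin-type independence argument over the double centralizer --- and the formal crossed-product deduction in your third paragraph is fine once those ingredients are in place. However, the decisive step, the identity $G\cap C'=A$ with $C'=C_{\M_n(D)}(C_{\M_n(D)}(A))$, is asserted rather than proved: you write that local solubility ``yields'' it and that one should ``grant these facts''. This is exactly the heart of Wehrfritz's theorem, and it does not follow from complete reducibility alone. One needs the specific structural analysis of locally soluble primitive skew linear groups developed in \cite{wehrfritz_90_criteriaII} (and its predecessor \cite{wehrfritz_91}), which shows in particular that $\alpha(G)$ already captures the full ``absolutely completely reducible'' part of $G$ in the sense that $G/\alpha(G)$ embeds in the outer automorphism group of $C$ modulo a suitable subgroup, so that anything in $G\cap C'$ acts on $C$ by an inner automorphism and hence lies in $\alpha(G)$. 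Your sketch gestures at this but does not supply the mechanism; as written it is a plausible plan rather than a proof, and the gap is precisely the step you flagged as the main obstacle.
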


\noindent{\it \textbf{Proof of Theorem~\ref{theorem_1.2}}}. For purposes of contradiction, assume that $M$ is not abelian. The crucial step in our proof is to point out that $[D:F]<\infty$. For, since $M$ is primitive (Lemma \ref{lemma_2.11}(i)), it follows from previous lemma that  $F[M]$ is a crossed product over the (locally finite)-by-abelian normal subgroup $\alpha(M)$ of $M$. Let $A$ be the maximal (locally finite)-by-abelian normal subgroup of $M$ containing $\alpha(M)$. If $B$ is a  normal subgroup of $M$ properly containing $A$, then by the same arguments used in the second paragraph of the proof of Theorem \ref{theorem_2.18}, we conclude that $M=B\alpha(M)=B$ and, in consequence, the group $M/A$ is simple. 

Since $M/A$  is simple and locally solvable, it is a finite group of prime degree. Let $T$ be a locally finite normal subgroup of $A$ such that $A/T$ is abelian.  Then,  Lemma \ref{lemma_2.11}(iii) implies that  $F[T]$ is prime. Because  $T$ is locally finite, we conclude that $F[T]$ is simple artinian (\cite[1.1.14]{shirvani-wehrfritz}). Thus, Lemma \ref{lemma_2.11}(iii) again says that either $F[T]=\M_n(D)$ or $T$ is abelian. If the first case occurs, then Lemma \ref{lemm_2.15} implies that $D$ is locally finite, and thus $[D:F]<\infty$ by \cite[Theorem 3.1]{hai-khanh}; we are done. If the second case occurs, then $A$ is solvable, and $M$ is thus solvable-by-finite. With reference to Theorem \ref{theorem_1.1}, we conclude that $[D:F]<\infty$.

 Set $k:=[D:F]$. By viewing $M$ as a (linear) subgroup of  $\GL_{kn}(F)$, we conclude that it is solvable.  It follows from Corollary \ref{corollary_2.19} that $M$ is abelian, and we arrive at the desired contradiction. The proof is now completed. $\square$
 
 \bigskip

The proof of Theorem~\ref{theorem_1.2} depends strongly on the assumption that $D$ is non-commutative. To deal with the commutative case, we need some different approaches. The rest of the present paper aims at considering Theorem~\ref{theorem_1.2} in the case where $D$ is a field.

\begin{remark}
	Let $D$ be a division ring and $n\geq 1$ an integer.  As mentioned in the introduction, a subgroup $G$ of $\GL_n(D)$ is called an absolutely irreducible skew linear group over $D$ if $F[G]=\M_n(D)$. For linear case, there are various equivalent definitions of this concept. To be more specific, suppose that $G$ is a subgroup of  $\GL_n(F)$ for some field $F$. Then $G$ is said to be absolutely irreducible if one of the following equivalent conditions holds: $F[G]=\M_n(F)$; $G$ is irreducible over every field extension of $F$; the centralizer of $G$ in $\M_n(F)$ is $F$ (see  \cite[p.10]{shirvani-wehrfritz}). Therefore, the following lemma is immediate.
\end{remark}

\begin{lemma}\label{lemma_3.4}
	Let $F$ be a field, $\bar{F}$ the algebraic closure of $F$, and $M$ a subgroup of $\GL_n(F)$ with $n\geq 1$. If $M$ is absolutely irreducible over $F$, then it is absolutely irreducible over $\bar{F}$.
\end{lemma}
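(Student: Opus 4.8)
The plan is to argue purely algebraically, using the characterization of absolute irreducibility recalled in the preceding remark: a subgroup $H\le\GL_n(K)$ over a field $K$ is absolutely irreducible exactly when $K[H]=\M_n(K)$. So the task reduces to deducing $\bar F[M]=\M_n(\bar F)$ from the hypothesis $F[M]=\M_n(F)$.

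First I would note that, since $M$ is a multiplicative group containing $\I_n$, the $F$-linear span of $M$ inside $\M_n(F)$ is already closed under multiplication and contains $F=F\I_n$; hence it coincides with the subring $F[M]$. Thus $F[M]=\M_n(F)$ says precisely that $M$ spans the $n^2$-dimensional $F$-vector space $\M_n(F)$, so one may choose $m_1,\dots,m_{n^2}\in M$ forming an $F$-basis of $\M_n(F)$. Regarded now as elements of $\M_n(\bar F)$, these matrices remain linearly independent over $\bar F$: writing out the $n^2\times n^2$ matrix whose rows are the coordinates of $m_1,\dots,m_{n^2}$ in the basis of matrix units, this matrix has entries in $F$ and is invertible over $F$, hence invertible over $\bar F$. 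Consequently $m_1,\dots,m_{n^2}$ is an $\bar F$-basis of $\M_n(\bar F)$, so $\bar F[M]\supseteq\sum_i\bar F m_i=\M_n(\bar F)$, giving $\bar F[M]=\M_n(\bar F)$, i.e. $M$ is absolutely irreducible over $\bar F$.

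There is no genuine obstacle here; the statement is a routine base-change fact, and the only point worth making explicit is that linear independence of matrices with entries in $F$ is preserved under the field extension $F\subseteq\bar F$, which follows at once from nonvanishing of the relevant determinant. Equivalently, the argument can be packaged as $\bar F[M]=\bar F\otimes_F F[M]=\bar F\otimes_F\M_n(F)=\M_n(\bar F)$, using that tensoring the surjection from the group algebra to $F[M]$ with $\bar F$ stays surjective onto $\bar F[M]$.
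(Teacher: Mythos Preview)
Your proof is correct and follows essentially the paper's approach: the paper simply declares the lemma ``immediate'' from the equivalent characterizations of absolute irreducibility listed in the preceding remark, and your argument spells out the most direct of these, namely that $F[M]=\M_n(F)$ forces $\bar F[M]=\M_n(\bar F)$ by base change. The only difference is that you give the details explicitly, whereas the paper leaves them to the reader.
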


\noindent{\it \textbf{Proof of Theorem~\ref{theorem_1.3}}}. In order to arrive at a contradiction, we assume that $M$ is non-abelian. If we set $R=F[M]$, then either $R^*\cap G\subseteq M$ or else $R=\M_n(F)$ by Lemma \ref{lemma_2.9}. If the first case occurs, by the same arguments used (for $N$) in the proof of Theorem \ref{theorem_2.14} says that $M$ is abelian, an obvious contradiction. We may therefore assume that $R=\M_n(F)$, which means that $M$ is absolutely irreducible over $F$. It follows from \cite[Lemma 3.5(ii)]{wehrfritz_infinite-linear-group} that $M$ is abelian-by-finite. Let $A$ be a maximal subgroup of $M$ with respect to the property: $A$ is an abelian normal subgroup of $M$ such that $M/A$ is finite. If $K=F[A]$, then the primitivity (see Lemma \ref{lemma_2.11}(i)) of $M$ yields that $K$ is a prime ring, and so this is actually an integral domain. Since $\dim_FK<\infty$, we conclude that $K$ is a subfield of $\M_n(F)$. If we set $L=C_{\M_n(F)}(A)$, then there exist a division $F$-algebra $E$ and an integer $m\geq 1$ such that $L\cong \M_{m}(E)$ (Lemma \ref{lemma_2.11}(iv)). It follows from Lemma \ref{lemma_2.9} that either $L=\M_n(F)$ or $L^*\cap G\subseteq M$. There are two possible cases.

\bigskip

\textit{Case 1: } $L=\M_n(F)$. It is clear that  $A\subseteq K\subseteq F$. Since we are in the case $F[M]=\M_n(F)$, we conclude that $Z(M)=M\cap F^*$. Thus, the condition $[M:A]<\infty$ and $A\subseteq F$ imply that $[M:Z(M)]<\infty$. As a result, we can take a transversal $\{x_1,\dots,x_k\}$ of $Z(M)$ in $M$. Pick an element $x\in G\backslash M$ and set $H=\left\langle x, x_1,\dots,x_k\right\rangle $. The maximality of $M$ in $G$ assures us to conclude that $G=HZ(M)$. Since $G'=(HZ(M))'=H'\subseteq G$, it follows that $H$ is normal in $G$, from which it follows that $H$ is a finitely generated subnormal subgroup of $\GL_n(F)$. By virtue of \cite{mah2000}, we have $H\subseteq F$, and so $M$ is abelian, which is a desired contradiction. 

\bigskip

\textit{Case 2: } $L^*\cap G\subseteq M$. In this case, one has $L^*\cap G$ is a solvable normal subgroup of $\GL_m(E)$, from which it follows that $L^*\cap G$ is an abelian normal subgroup of $M$. The maximality of $A$ in $M$ shows that $L^*\cap G=L^*\cap M=A$. Consequently, $A$ is indeed a maximal abelian subgroup of $M$. Therefore, we may apply Lemma \ref{lemma_2.16} to conclude that $F[M]=\M_n(F)$ is a crossed product of $K$ by $M/A$, and that $K$ is a maximal subfield of $\M_n(F)$. By the same arguments used in the second paragraph of the proof of Theorem \ref{theorem_2.18}, we conclude that $M/A$ is a simple group.

Since $M/A$ is solvable and simple, it is of prime degree, say $p$. 
Consequently, we have $n=p$ and $[K:F]=p$. Let $\bar{F}$ be the algebraic closure of $F$. In view of Lemma \ref{lemma_3.4}, we conclude that $\bar{F}[M]=\M_n(\bar{F})$. It follows that $Z(M)=M\cap \bar{F}^*$. The condition $A\subseteq K^*\subseteq \bar{F}^*$ implies  that $A\subseteq Z(M)$; hence, we deduce $A=Z(M)$ by the maximality of $A$. Also, the fact $F[M]=\M_n(F)$ yields that $Z(M)=M\cap F^*$, from which it follows that $A\subseteq F$. This being the case, we obtain that $K=F[A]=F$, contrasting to the fact that $[K:F]=p$ is a prime number. $\square$

\bigskip

\noindent

\textbf{Acknowledgements.}
The authors are profoundly grateful to the referees for their careful reading and suggestions which led a significantly improved revised version of the manuscript.


\begin{thebibliography}{}
	
		
	\bibitem{akbari} S. Akbari, R. Ebrahimian, H. Momenaei Kermani, A. Salehi Golsefidy, Maximal subgroups of $\GL_n(D)$, J. Algebra 259 (2003) 201-225.
		
	\bibitem{deo-bien-hai-19} T. T. Deo,  M. H. Bien, B. X. Hai, On division subrings normalized by almost subnormal subgroups in division rings, Period. Math. Hung. 80 (2020) 15-27.
	
	\bibitem{dorbidi2011}  H. R. Dorbidi, R. Fallah-Moghaddam, M. Mahdavi-Hezavehi, Soluble maximal subgroups in $\GL_n(D)$, J. Algebra Appl. 10(6) (2011) 1371-1382.  
	
	\bibitem{draxl}  P. Draxl, Skew Fields, London Math. Soc. Lecture Note Ser. 81, Cambridge University Press, 1983.
	
	\bibitem{ebrahimian_04}  R. Ebrahimian, Nilpotent maximal subgroups of $GL_n(D)$, J. Algebra 280 (2004) 244–248.
	
	\bibitem{moghaddam} R. Fallah-Moghaddam, Maximal subgroups of $\SL_n(D)$, J. Algebra 531 (2019) 70-82.
	
	\bibitem{goodearl-warfield} K. R. Goodearl, R. B. Warfield, An Introduction to Noncommutative Noetherian Rings, London Math. Soc. 61, Cambridge University Press, Cambridge, 2004.
	
	\bibitem{hai-minh-bien-chua} B. X. Hai,  B-M. Bui-Xuan, M. H. Bien, and L. V. Chua, Intersection graphs of almost subnormal subgroups in general skew linear groups, arXiv:2002.06522v1.
	
	\bibitem{hai-khanh}  B. X. Hai, H. V. Khanh, Free subgroups in maximal subgroups of skew linear groups, Internat. J. Algebra Comput. Internat. J. Algebra Comput. 29(3) (2019) 603-614.
	
	\bibitem{Hartley_89} B. Hartley, Free groups in normal subgroups of unit groups and arithmetic groups, Contemp. Math. 93 (1989) 173-177.
	
		
	\bibitem{Haz-Wad} R. Hazrat and  A. R. Wadsworth, On maximal subgroups of the multiplicative group of a division algebra, Journal of Algebra, 322(7) (2009) 2528-2543.
	
	\bibitem{hazrat}  R. Hazrat, M. Mahdavi-Hezavehi and M. Motiee, Multiplicative groups of division rings, Mathematical Proceedings of the Royal Irish Academy 114A (2014) 37-114.
		
	\bibitem{her} I. N. Herstein, Multiplicative commutators in division rings, Israel J. Math. 31 (1978) 180-188.
	
	\bibitem{khanh-hai} H. V. Khanh, B. X. Hai, A note on solvable maximal subgroups in subnormal subgroups of $\GL_n(D)$, arXiv:1809.00356v2 [math.RA].
			
	\bibitem{khanh-hai_almot} H. V. Khanh, B. X. Hai, On almost subnormal subgroups in division ring, arXiv:1908.04925v2 [math.RA].
	
	\bibitem{tylam_FC} T. Y. Lam, A first course in noncommutative rings, 2nd ed., Graduate Texts in Mathematics, vol. 131, Springer-Verlag, New York, 2001.
	
	\bibitem{lanski_81} C. Lanski, Solvable subgroups in prime rings,  Proc. Amer. Math. Soc. 82 (1981) 533-537. 
	
	\bibitem{mah98} M. Mahdavi-Hezavehi and S. Akbari, Some special subgroups of $\GL_n(D)$, Algebra Colloq. 5(4) (1998) 361-370.
	
	\bibitem{mah2000} M. Mahdavi-Hezavehi, M. G. Mahmudi, and S. Yasamin, Finitely generated subnormal subgroups of $\GL_n(D)$ are central, J. Algebra 225 (2000) 517-521.
	
	\bibitem{ngoc_bien_hai_17}  N. K. Ngoc,  M. H. Bien,  B. X. Hai, Free subgroups in almost subnormal subgroups of general skew linear groups, Algebra i Analiz 28(5) (2016) 220-235, translation in St. Petersburg Math. J. 28(5) (2017) 707-717.
	
	\bibitem{passman_77} D. S. Passman, The algebraic structure of group rings, New York: Wiley- Interscience Publication, 1977.
	
	\bibitem{nassab14} M. Ramezan-Nassab and D. Kiani, Nilpotent and polycyclic-by-finite maximal subgroups of skew linear groups, J. Algebra 399 (2014) 269-276.
	
	\bibitem{rem-kiani} M. Ramezan-Nassab, D. Kiani, Some skew linear groups with Engel's condition,  J. Group Theory
	15 (2012) 529–541.
		
	\bibitem{rowen} L. H. Rowen, Polynomial Identities in Ring Theory, Academic Press, New York, 1980.
	
	\bibitem{shirvani-wehrfritz} M. Shirvani and B. A. F. Wehrfritz, Skew Linear Groups, Cambridge Univ. Press, 1986.
	
	\bibitem{stuth} C. J. Stuth, A generalization of the Cartan-Brauer-Hua Theorem, Proc. Amer. Math. Soc. 15(2) (1964) 211-217.
	
	\bibitem{wehrfritz_infinite-linear-group}  B. A. F. Wehrfritz, Infinite linear groups, Springer-Verlag, Berlin, 1973.
	
	\bibitem{wehrfritz84}  B. A. F. Wehrfritz, Soluble-by-periodic skew linear groups, Mathematical Proceedings of the Cambridge Philosophical Society 96(3) (1984) 379-3
	89.
	
	\bibitem{wehrfritz_89} B. A. F. Wehrfritz, Goldie subrings of Artinian rings generated by groups, Q. J. Math. Oxford 40 (1989) 501-512.
	
	\bibitem{wehrfritz_91} B. A. F. Wehrfritz, Crossed product criteria and skew linear groups, J. Algebra 141 (1991) 321–353.
	
	\bibitem{wehrfritz_90_criteriaII} B. A. F. Wehrfritz, Crossed product criteria and skew linear groups II, Michigan Math. J. 37 (1990) 293–303.
		
	\bibitem{wehrfritz_07} B. A. Wehrfritz, Normalizers of nilpotent subgroups of division rings, Q. J. Math. 58 (2007) 127-135.
	
\end{thebibliography}
\end{document}